\newcommand*{\myprime}{^{\prime}\mkern-1.2mu}
\newcommand*{\mydprime}{^{\prime\prime}\mkern-1.2mu}
\DeclareMathAlphabet\mathbfcal{OMS}{cmsy}{b}{n}
\newtheorem{theorem}{Theorem}[section]
\newtheorem{lemma}[theorem]{Lemma}
\theoremstyle{definition}
\newtheorem{remark}[theorem]{Remark}
\newcolumntype{P}[1]{>{\centering\arraybackslash}p{#1}}
\newcommand\makebig[2]{%
  \@xp\newcommand\@xp*\csname#1\endcsname{\bBigg@{#2}}%
  \@xp\newcommand\@xp*\csname#1l\endcsname{\@xp\mathopen\csname#1\endcsname}%
  \@xp\newcommand\@xp*\csname#1r\endcsname{\@xp\mathclose\csname#1\endcsname}%
}
\renewcommand{\vec}[1]{\mbox{\boldmath$#1$}}
\newcommand{\dif}{\mathrm{d}}
\newcommand{\im}{\mathrm{i}}
\begin{document}
\title{{Characterising buried objects in metal detection}}
\author{P.D. Ledger$^*$ and W.R.B. Lionheart$^\dagger$,\\
$^*$School of Computing \& Mathematical Sciences, University of Leicester,\\
University Road, Leicester LE1 7RH, U.K.\\
$^\dagger$Department of Mathematics, The University of Manchester,\\
Oxford Road, Manchester, M13 9PL, U.K. \\
Corresponding author: pdl11@leicester.ac.uk}
\maketitle

\section*{Abstract}
Current mathematical models for identifying highly conducting buried objects in metal detection  assume that the soil is non-conducting and {has the same permeability  as  free space. However, although the electrical conductivity of soil is low, it is not negligible and depends on factors such as soil type and salinity.
Moreover, the magnetic permeability of soil varies with its iron content and is often described as uncooperative.}
Depending on the ground conditions,  {these soil's properties} can influence the induced voltages in the measurement coils of metal detectors and becomes of increasing importance as the frequency of the exciting current source  is increased. In this work, we develop a new asymptotic expansion for the perturbed magnetic field due to the presence of a highly conducting magnetic buried object as its size tends to zero, which takes account of the ground conditions.  The leading order term of this expansion can be expressed in terms of a complex symmetric rank-2 magnetic polarizability tensor, {which characterises the object, can assist in its identification}, and we provide conditions under which this tensor's coefficients can be computed independently of the ground conditions. We {demonstrate} the improved accuracy of our new result, which takes account of the ground conditions, over  the situation where the soil's conductivity and permeability are not considered.

\noindent {\bf Keywords:} Metal detection; Polarizability tensors; Eddy current; Buried objects; Uncooperative soil.

\noindent {\bf 2020 Mathematics Subject Classification:} 35R30, 35Q61, 78A46 (Primary); 35B30, 78A25 (Secondary)

\section{Introduction}
Metal detectors provide a low-cost portable solution for identifying buried metallic objects located close to the ground's surface. Applications include identifying unexploded ordnance, anti-personnel {and anti-vehicle}  landmines in {former conflict zones}, mineral prospecting and finding metallic items of archeological {or forensic} significance. They  are also popular with hobbyist treasure hunters. Metal detectors work on the principle of magnetic induction and often consist a simple audible tone obtained from the induced voltage, which an operator uses to identify the presence of a conducting body. {Key challenges} for existing metal detectors include  identifying the shape of a conducting body and distinguishing between a small object buried close to the surface and a larger object buried at greater depths. However, since the voltage signal induced in a measurement coil contains object characterisation information,  there is scope to use this information to help to better identify hidden objects.

Metallic objects have a high electrical conductivity (typically of the order of $10^6$ S/m or higher) and magnetic metals have a relative magnetic permeability that is substantially greater than $1$. Given the high conductivities of metals and the low frequencies of metal detectors (typically $10-100$ kHz), mathematical models of metal detection  use the eddy current approximation of Maxwell's equations, where the displacement currents are neglected due to the dominance of the Ohmic currents~\cite{Ammari2014}. It is also common to assume that the ground is non-conducting and have to the same magnetic permeability as free space. {In this work,  this important restriction is removed and new results on object characterisation are obtained. }

While the conductivity of the soil is much smaller than that of the metallic object, its electrical conductivity is not zero and depends on the specific nature of the soil. Strongly saline soils, {which includes dissolved salts that are naturally occurring as well as contamination from pollution or fertilisers},   have an electrical conductivity of around 1.6 S/m~\cite{salinesoilcond}, but those soils with a low salt content have a lower electrical conductivity. Clay  soils tend to have a higher electrical conductivity than those primarily made of silt or sand. While the relative magnetic permeability of many soils is typically close to 1 (typical value 1.0006), uncooperative soils and rocks with high iron content can have higher values e.g. volcanic rock and soil (typical value 1.021), granite (typical value 1.076) and rock {in} some iron mining areas (typical value 1.1)~\cite{magsoilperm}. Depending on the ground conditions, the electrical conductivity and magnetic permeability of the soil can influence the signal produced and are of increasing importance as the frequency is increased.

Using the eddy current approximation of Maxwell's equations, and assuming the conducting object is placed in a non-conducting background,  Ammari, Chen, Chen, Garnier and Volkov~\cite{Ammari2014} have developed the leading order term in an asymptotic expansion of the perturbed magnetic field due to the presence of a highly conducting object as its size tends to zero. Ledger and Lionheart~\cite{LedgerLionheart2015} have shown that this leading order term can be written in terms of a complex symmetric rank-2 magnetic polarizability tensor (MPT), which is independent of the object position and provide explicit expressions for the computation of its coefficients. 
The authors have also obtained a complete asymptotic expansion of the perturbed magnetic field where the higher order terms are written in terms of generalised MPTs~\cite{LedgerLionheart2018g,LedgerLionheart2023a}, which provide additional object characterisation information. The (generalised) MPTs are related to the generalised polarisation/polarizability tensors introduced by Ammari and Kang~\cite{ammarikangbook} for applications in electrical impedance tomography and composite materials, but require the solution of vectorial, rather than scalar, transmission problems and their coefficients depend on the object's size, shape, electrical conductivity and magnetic permeability as well as the exciting frequency.
The MPT coefficients as a function of exciting frequency, known as its spectral signature~\cite{LedgerLionheart2020spect},  also provide additional characterisation information and computational approaches~\cite{ben2020,Elgy2024_preprint} have been developed to efficiently compute the MPT spectral signature. Dictionaries of computed MPTs have been obtained~\cite{ledgerwilsonamadlion2021} and machine learning approaches applied to identify classes of objects for applications in security screening~\cite{marsh2014b, marsh2015,ledgerwilsonlion2022}.

The novelties of this work are as follows:

\begin{itemize}

\item The development of a new asymptotic expansion for the perturbed magnetic field evaluated in free space due to the presence of highly conducting (magnetic) buried object as its size tends to zero and the soil is a low-conducting medium.

\item Providing conditions under which the object can be characterised by a rank-2 MPT whose coefficients can be computed independently of the ground conditions.

\item Illustrating the improved accuracy obtained by the new asymptotic expansion, which incorporates information about the ground conditions, compared to a previous formulation that assumes the properties of the ground are the same as free space.

\end{itemize}

Fundamental to our work, will be the use of an appropriate Green's function to take account of the soil-air interface. In general this will necessitate the use of a dyadic Green's function rather than the scalar Laplace Green's function considered in previous work~\cite{Ammari2014,LedgerLionheart2015}.  To aid with this, we will draw on the work in the area of non-destructive testing where Green's functions have been obtained for half-space problems involving a conductor-air interface~\cite{bowler1987,bowler1991}.

The material will be presented as follows: In Section~\ref{sect:mathmodel}, the mathematical model for a buried highly conducting permeable object using the eddy current approximation of the Maxwell system is described. Then, in Section~\ref{sect:energy}, new energy estimates are obtained extending those presented in~\cite{Ammari2014} to the situation considered in this work.  Section~\ref{sect:transprob} presents transmission problems that are independent of the object's position. Then, in Section~\ref{sect:integalrep}, integral representation formulae are derived where the aforementioned dyadic Green's function that  takes account of the soil-air interface plays an important role.  Section~\ref{sect:asymform} derives the new asymptotic formula for the perturbed magnetic field as the size of the buried object tends to zero and Section~\ref{sect:tensor} presents this in terms of a magnetic polarizability tensor representation that can be computed independently of the soil's properties and the object's position. In Section~\ref{sect:numerical} we include a series of numerical results to illustrate our proposed approach.

\section{Mathematical model}\label{sect:mathmodel} 

\subsection{Governing equations}
Given a highly conducting permeable object $B_\alpha$  with Lipschitz boundary $\Gamma_*:=\partial B_\alpha$ buried in a low-conducting medium $\Omega_{s}\subset {\mathbb R}^3 $, with $B_\alpha  \subset \Omega_{s} $, and excited by a solenoidal time harmonic solenoidal current source ${\vec J}^s$ located in free space $\Omega_{fs} := {\mathbb R}^3 \setminus \overline{\Omega_{s}}$, we wish to predict the magnetic field perturbation in $\Omega_{fs}$ due to the presence of $B_\alpha$. We describe the object as $B_\alpha= \alpha B + {\vec z}$ where $\alpha$ denotes the object size, $B$ describes a uniform sized object placed at origin and ${\vec z}$ denotes a translation. We will further assume that the ground's surface $\Gamma_s:= \partial \Omega_s \cap \partial \Omega_{fs} $  is horizontal. 

In the absence of the object, the background electric and magnetic fields ${\vec E}_0$ and ${\vec H}_0$, respectively, satisfy
\begin{subequations}
\begin{align}
\nabla \times  {\vec E}_0  &= \im \omega \mu_0 {\vec H}_0 && \text{in $\Omega_{fs}$}, \\
\nabla \times  {\vec H}_0  &=  {\vec J}^s && \text{in $\Omega_{fs}$}, \\
\nabla \times  {\vec E}_0  &= \im \omega \mu_{r,s} \mu_0 {\vec H}_0 && \text{in $\Omega_{s}$}, \\
\nabla \times  {\vec H}_0  &= \sigma_{s} {\vec E}_0  && \text{in $\Omega_{s}$}, \\
\nabla \cdot {\vec E}_0 = \nabla \cdot {\vec H}_0 & = 0 && \text{in ${\mathbb R}^3 $}, \\
[ {\vec n} \times {\vec E}_0 ]_{\Gamma_s} =  [{\vec n} \times {\vec H}_0 ]_{\Gamma_s} & = {\vec 0}  && \text{on $\Gamma_{s}$}, \\
{\vec E}_0  = O(1/|{\vec x}|), {\vec H}_0  &= O(1/|{\vec x}|),&& \text{as $|{\vec x}| \to \infty$}, 
\end{align}
\end{subequations}
where $[\cdot]$ denotes the jump, ${\vec n}$ is the unit outward normal, $\omega$ is the angular frequency, $\im = \sqrt{-1}$, $\mu_0 = 4\pi \times 10^{-7}$H/m is the permeability of free space, $\mu_{r,s}$ is the relative magnetic permeability of the soil and $\sigma_s$ is the electrical conductivity of the soil.   {We shall assume that the material parameters of the soil are constant. The uniqueness of ${\vec E}_0$ in $\Omega_{fs}$ is achieved by additionally specifying 
\begin{align}
\int_{\Gamma_s} {\vec n} \cdot {\vec E}_0 |_+ \dif {\vec x}, \label{eqn:unique}
\end{align}
and in practice the decay of the fields is faster than $|{\vec x}|^{-1}$~\cite{ammaribuffa2000}. } Eliminating ${\vec H}_0$ gives
\begin{subequations} \label{eqn:e0}
\begin{align}
\nabla \times \mu_0^{-1}  \nabla \times {\vec E}_0 &=  \im \omega {\vec J}^s && \text{in $\Omega_{fs}$} ,\\
\nabla \times \mu_0^{-1} \mu_{r,s}^{-1}  \nabla\times {\vec E}_0 - \im \omega  \sigma_s {\vec E}_0 &=  {\vec 0} && \text{in $\Omega_{s}$},\\
\nabla \cdot {\vec E}_0  & = 0 && \text{in $\Omega_{fs}\cup \Omega_s $},\\
[ {\vec n} \times {\vec E}_0 ]_{\Gamma_s}  = [ {\vec n} \times \mu^{-1} \nabla \times  {\vec E}_0 ]_{\Gamma_s}& = {\vec 0} &&  \text{on $\Gamma_{s} $} ,\\
{\vec E}_0  &= O(1/|{\vec x}|) && \text{as $|{\vec x}| \to \infty$}.
\end{align}
\end{subequations}
Note the divergence constraint is automatically satisfied in $\Omega_s$, but we retain it below for convenience of considering the limiting case where the soil is non-conducting. 

In the presence of the object, the electric and magnetic interaction fields  ${\vec E}_\alpha$ and ${\vec H}_\alpha$ satisfy
\begin{subequations}
\begin{align}
\nabla \times  {\vec E}_\alpha  &= \im \omega \mu_0 {\vec H}_\alpha && \text{in $\Omega_{fs}$}, \\
\nabla \times  {\vec H}_\alpha  &=  {\vec J}^s && \text{in $\Omega_{fs}$}, \\
\nabla \times  {\vec E}_\alpha  &= \im \omega \mu_0 \mu_{r,s}  {\vec H}_\alpha && \text{in $\Omega_{s}\setminus \overline{B}_\alpha $}, \\
\nabla \times  {\vec H}_\alpha  &= \sigma_{s} {\vec E}_\alpha  && \text{in $\Omega_{s} \setminus \overline{B_\alpha}$}, \\
\nabla \times  {\vec E}_\alpha  &= \im \omega \mu_0 \mu_{r,*}  {\vec H}_\alpha && \text{in ${B}_\alpha $}, \\
\nabla \times  {\vec H}_\alpha  &= \sigma_{*} {\vec E}_\alpha  && \text{in ${B_\alpha}$}, \\
\nabla \cdot {\vec E}_\alpha= \nabla \cdot {\vec H}_\alpha & = 0 && \text{in $\Omega_{fs} \cup \Omega_s$}, \\
[ {\vec n} \times {\vec E}_\alpha ]_{\Gamma} =  [{\vec n} \times {\vec H}_\alpha ]_{\Gamma} & = {\vec 0}  && \text{on $\Gamma=\Gamma_{s},\Gamma_* $}, \\
{\vec E}_\alpha  = O(1/|{\vec x}|), {\vec H}_\alpha  &= O(1/|{\vec x}|),&& \text{as $|{\vec x}| \to \infty$}, 
\end{align}
\end{subequations}
where $\mu_{r,*}$ is the relative magnetic permeability of the object and $\sigma_*$ is the electrical conductivity of the object, which are assumed to be constants. 
 {The uniqueness of ${\vec E}_\alpha$ in $\Omega_{fs}$ is achieved by additionally specifying a condition similar to (\ref{eqn:unique}) with ${\vec E}_0$ replaced by ${\vec E}_\alpha$  and again, in practice, the decay of the fields is faster than $|{\vec x}|^{-1}$~\cite{ammaribuffa2000}. If $\sigma_s=0$, the uniqueness of ${\vec E}_\alpha $ in $\Omega_{fs} \cup \Omega_s \setminus \overline{B_\alpha}$ is achieved by a similar integral, but instead integrated over $\Gamma_*$.}
Eliminating ${\vec H}_\alpha$ gives
\begin{subequations} \label{eqn:ealpha}
\begin{align}
\nabla \times \mu_0^{-1}  \nabla \times {\vec E}_\alpha &= \im \omega  {\vec J}^s && \text{in $\Omega_{fs}$} ,\\
\nabla \times \mu_0^{-1} \mu_{r,s}^{-1}  \nabla\times {\vec E}_\alpha - \im \omega  \sigma_s {\vec E}_\alpha &=  {\vec 0} && \text{in $\Omega_{s} \setminus \overline{B}_\alpha $},\\
\nabla \times \mu_0^{-1}  \mu_{r,*}^{-1}  \nabla\times {\vec E}_\alpha - \im \omega  \sigma_* {\vec E}_\alpha &=  {\vec 0} && \text{in $ {B}_\alpha $},\\
\nabla \cdot {\vec E}_\alpha   & = 0 && \text{in $\Omega_{fs} \cup \Omega_s $},\\
[ {\vec n} \times {\vec E}_\alpha ]_{\Gamma} =  [ {\vec n} \times \mu^{-1} \nabla \times  {\vec E}_\alpha  ]_{\Gamma} & = {\vec 0} &&  \text{on $\Gamma= \Gamma_{s}, \Gamma_* $} ,\\
{\vec E}_0  &= O(1/|{\vec x}|) && \text{as $|{\vec x}| \to \infty$}. 
\end{align}
\end{subequations}

Introducing
 {
\begin{align}
X :&= \{ {\vec u} \in {\vec H} ( \hbox{curl}) :  \nabla \cdot {\vec u} = 0 \text{ in $\Omega_{fs}\cup \Omega_s$}, {\vec u} = O(1/|{\vec x}|) \text{ as $|{\vec x}| \to \infty $} \}, \nonumber \\
\tilde{X} :& = \left \{ {\vec u} \in X :  \int_{\Gamma_s} {\vec n} \cdot {\vec u} |_+ \dif {\vec x} =0 \text{ if $\sigma_s \ne 0$ and }  \int_{\Gamma_*} {\vec n} \cdot {\vec u} |_+ \dif {\vec x} =0 
\text{ otherwise} \right \}, \nonumber
\end{align}
} the weak solution of (\ref{eqn:e0}) is: Find ${\vec E}_0 \in \tilde{X}$ such that
\begin{align}
( \mu_0^{-1} \nabla \times  {\vec E}_0 , \nabla \times {\vec v} )_{\Omega_{fs}} + ( \mu_0^{-1} \mu_{r,s}^{-1}  \nabla \times  {\vec E}_0 , \nabla \times {\vec v} )_{\Omega_{s}}
-\im \omega ( \sigma_s  {\vec E}_0 ,  {\vec v} )_{\Omega_{s}} = \im \omega  ( {\vec J}^s , {\vec v})_{\Omega_{fs}} \forall {\vec v} \in \tilde{X},  \label{eqn:we0}
\end{align}
where $(\cdot , \cdot)_{\Omega}$ is the standard $L^2$ inner product over $\Omega$.  {If $\sigma_s=0$, $\mu_r=1$ this reduces to the situation considered in~\cite{Ammari2014}}.
The weak solution of (\ref{eqn:ealpha}) is: Find ${\vec E}_\alpha  \in \tilde{X}$ such that
\begin{align}
( \mu_0^{-1} \nabla \times  {\vec E}_\alpha  , \nabla \times {\vec v} )_{\Omega_{fs}} +& ( \mu_0^{-1} \mu_{r,s}^{-1}  \nabla \times  {\vec E}_\alpha , \nabla \times {\vec v} )_{\Omega_{s}\setminus \overline{B_\alpha}}
+ ( \mu_0^{-1} \mu_{r,*}^{-1}  \nabla \times  {\vec E}_\alpha , \nabla \times {\vec v} )_{B_\alpha }\nonumber \\
&-\im \omega ( \sigma_s  {\vec E}_\alpha ,  {\vec v} )_{\Omega_{s} \setminus \overline{B_\alpha} }
-\im \omega ( \sigma_*  {\vec E}_\alpha ,  {\vec v} )_{B_\alpha}
 = \im \omega  ( {\vec J}^s , {\vec v})_{\Omega_{fs}} \forall {\vec v} \in \tilde{X},  \label{eqn:wealpha}
\end{align}
 {and again for $\sigma_s=0, \mu_{r,s}=1$ reduces to the situation considered in~\cite{Ammari2014} .}

In a similar manner to~\cite{Ammari2014}, we introduce 
\begin{align}
\nu :=   \alpha^2  \sigma_* \mu_0 \omega <C, 
\end{align}
and additionally require
{
\begin{align}
\varepsilon: =\omega \mu_0 \sigma_sD^2  \le \nu,
 \end{align}
where $D$ corresponds to the depth of the object, which controls $\sigma_s$ relative to the other parameters. In addition, we require  $1\le \mu_{r,s} \le 1+ \frac{\alpha}{D}$. This means that
$|1- \mu_{r,s}^{-1}| \omega \mu_0 \sigma_sD^2 \le \varepsilon$ and  $|\mu_{r,s}-1 | \le \frac{\alpha}{D} \le \left (\frac{\alpha}{D}\right )^2 \frac{\sigma_*}{\sigma_s} \mu_{r,s}$.}

\section{Energy estimates} \label{sect:energy}

Constructing (\ref{eqn:wealpha}) - (\ref{eqn:we0}) leads to
\begin{align}
(  \mu_0^{-1} \nabla \times ( {\vec E}_\alpha - {\vec E}_0) ,  \nabla \times {\vec v} )_{\Omega_{fs} }+&
(  \mu_0^{-1} \mu_{r,s}^{-1}  \nabla \times ( {\vec E}_\alpha - {\vec E}_0) , \nabla \times {\vec v} )_{\Omega_{s}\setminus \overline{B_\alpha} }
+
(  \mu_0^{-1} \mu_{r,*}^{-1} \nabla \times ( {\vec E}_\alpha - {\vec E}_0) ,  \nabla \times {\vec v})_{B_\alpha } \nonumber\\
&- \im \omega (  \sigma_s  ( {\vec E}_\alpha - {\vec E}_0) , {\vec v} )_{\Omega_s \setminus \overline{B_\alpha} }
- \im \omega (  \sigma_*  ( {\vec E}_\alpha - {\vec E}_0 ), {\vec v} )_{  {B_\alpha} } \nonumber \\
& = \mu_0^{-1} ( (\mu_{r,s}^{-1} - \mu_{r,*}^{-1}) \nabla \times   {\vec E}_0 ,  \nabla \times {\vec v} )_{B_\alpha}
-\im \omega ( (\sigma_s-\sigma_*)   {\vec E}_0 , {\vec v} )_{  {B_\alpha} } .\nonumber
\end{align}

Then, in a similar way to~\cite{Ammari2014}, we introduce ${\vec w}\in \tilde{X}$ as the solution to 
\begin{align}
(  \mu_0^{-1} \nabla \times {\vec w},  \nabla \times {\vec v} )_{\Omega_{fs} }+&
(  \mu_0^{-1} \mu_{r,s}^{-1} \nabla \times {\vec w}  , \nabla \times {\vec v} )_{\Omega_{s}\setminus \overline{B_\alpha} }
+
(  \mu_0^{-1} \mu_{r,*}^{-1}  \nabla \times {\vec w}  ,  \nabla \times {\vec v})_{B_\alpha } \nonumber\\
&- \im \omega (  \sigma_s  {\vec w}  , {\vec v} )_{\Omega_s \setminus \overline{B_\alpha} }
- \im \omega (  \sigma_*   {\vec w}  , {\vec v} )_{  {B_\alpha} } \nonumber \\
& = \mu_0^{-1} ( (\mu_{r,s}^{-1} - \mu_{r,*}^{-1}) \nabla \times {\vec F}  ,  \nabla \times {\vec v} )_{B_\alpha}
-\im \omega ( (\sigma_s-\sigma_*)  {\vec F} , {\vec v} )_{  {B_\alpha} } \forall {\vec v} \in \tilde{X}, \label{eqn:wprob}
\end{align}
where
\begin{subequations}\label{eqn:feqns}
\begin{align}
{\vec F}({\vec x}) & := \frac{1}{2} (\nabla_z \times {\vec E}_0 ) \times ({\vec x} - {\vec z}) + \frac{1}{3} {\vec D}_z (\nabla_z \times {\vec E}_0 )({\vec x} - {\vec z} ) \times ({\vec x} - {\vec z}),
\end{align}
so that
\begin{align}
\nabla_x \times {\vec F} & = \nabla_z \times {\vec E}_0 + {\vec D}_z (\nabla_z \times {\vec E}_0 )({\vec x} - {\vec z} ), \\
\nabla_x \cdot {\vec F} & = -\im \omega \mu_0 \nabla_z \times {\vec H}_0\cdot ({\vec x} - {\vec z})  = - \im \omega \mu_0 \sigma_s {\vec E}_0 ({\vec z})\cdot ({\vec x} - {\vec z}) , \label{eqn:divF}
\end{align}
\end{subequations}
in $B_\alpha$, where $\nabla_x \times {\vec F} $ corresponds to the first two terms of a Taylor's series expansion of $ \nabla \times {\vec E}_0$ about ${\vec z}$ as $| {\vec x} - {\vec z}| \to 0$. 

 {Considering first the case of $\sigma_s=0$, $\mu_{r,s}=1$ then 
 \begin{subequations}
\begin{align}
\nabla \cdot  {\vec w} & = - \nabla \cdot  {\vec F} && \text{in ${B}_\alpha$} ,\\
{\vec n} \cdot   {\vec w}^-  &  =  - {\vec n} \cdot   {\vec F}  &&  \text{on $\Gamma_*$} .
\end{align}
\end{subequations}
By choosing $\nabla \cdot( {\vec F} - \nabla \phi_0 ) =0$ in $B_\alpha$, setting
${\vec n} \cdot  ( \nabla \phi_0 +{\vec E}_0)^-  ={\vec n} \cdot  {\vec F}  ({\vec x})$ on $\Gamma_*$ and $\int_{B_\alpha} \phi_0 \dif {\vec y} =0 $, then $ \nabla \cdot   ( {\vec F} - \nabla \phi_0 ) =0$ and, hence, $\nabla \cdot ( - {\vec w} -  \nabla \phi_0 ) =0$ in $B_\alpha$ ~\cite{Ammari2014}. From (\ref{eqn:we0}) we have $\nabla \cdot {\vec E}_0 =0$ in $B_\alpha$ and ${\vec n} \cdot {\vec E}_0^- =0 $ on $\Gamma_*$, and, from (\ref{eqn:wealpha}),  $\nabla \cdot {\vec E}_\alpha  = 0 $ in $B_\alpha$ and ${\vec n} \cdot {\vec E}_\alpha^- =0 $ on $\Gamma_*$. This means that, 
\begin{align}
\| {\vec E}_0 - {\vec F} + \nabla \phi_0\|_{L^2(B_\alpha)} \le & C \alpha \left (  \| \nabla \times ({\vec E}_0 - {\vec F})\|_{L^2(B_\alpha)} + \| {\vec n} \cdot ({\vec E}_0 - {\vec F} + \nabla \phi_0 )\|_{L^2(\Gamma_* )}  \right ) \nonumber\\
\le & C \alpha   \| \nabla \times ({\vec E}_0 - {\vec F})\|_{L^2(B_\alpha)} .  \label{eqn:e0mfmphiorg}
\end{align}
Additionally,
\begin{align}
\left \| {\vec E}_\alpha - {\vec E}_0 - {\vec w} -  \nabla \phi_0 \right \|_{L^2(B_\alpha)} \le C \alpha 
 \| \nabla \times (  {\vec E}_\alpha - {\vec E}_0 - {\vec w}  ) \|_{L^2(B_\alpha)} . \label{eqn:orgealphae0wphi0}
\end{align}
Using the above, Ammari {\it et al.}~\cite{Ammari2014} obtain the following for $\sigma_s=0, \mu_{r,s}=1$.
\begin{lemma}[Ammari {\it et al.}~\cite{Ammari2014}] \label{lemma:orgengyest}
Let ${\vec w}$ be the solution to (\ref{eqn:wprob})  for $\sigma_s=0, \mu_{r,s}=1$. Then there exists a constant $C$ such that
\begin{subequations}
\begin{align}
\left \| \nabla \times \left ( {\vec E}_\alpha - {\vec E}_0 - {\vec w}   \right) \right \|_{L^2(B_\alpha)} \le & C \alpha^{7/2}
 \mu_{r,*} ( | 1- \mu_{r,*}^{-1} | +\nu)    \| \nabla \times {\vec E}_0 \|_{W^{2,\infty}(B_\alpha)}, \\
 \left \| {\vec E}_\alpha - {\vec E}_0 - {\vec w} -  \nabla \phi_0 \right \|_{L^2(B_\alpha)} \le & C
  \alpha^{9/2}
 \mu_{r,*} (  |1 - \mu_{r,*} ^{-1} | +\nu)  \| \nabla \times {\vec E}_0 \|_{W^{2,\infty}(B_\alpha)} .
 \end{align}
 \end{subequations}
\end{lemma}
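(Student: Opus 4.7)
The plan is to derive a weak residual equation for $\vec{u} := \vec{E}_\alpha - \vec{E}_0 - \vec{w} \in \tilde{X}$ and then extract the $\alpha$-scaling through an energy estimate combined with a Taylor remainder bound and the Friedrichs-type inequalities (\ref{eqn:e0mfmphiorg})--(\ref{eqn:orgealphae0wphi0}). First I would subtract (\ref{eqn:wprob}) from the difference of (\ref{eqn:wealpha}) and (\ref{eqn:we0}); in the regime $\sigma_s=0$, $\mu_{r,s}=1$ the background bilinear contributions cancel, leaving for every $\vec{v} \in \tilde{X}$ a weak equation whose left-hand side is the object bilinear form applied to $\vec{u}$ and whose right-hand side is concentrated in $B_\alpha$ and proportional to the Taylor remainder $\vec{E}_0 - \vec{F}$, namely $\mu_0^{-1}((1-\mu_{r,*}^{-1})\nabla\times(\vec{E}_0-\vec{F}), \nabla\times\vec{v})_{B_\alpha} + \im\omega(\sigma_*(\vec{E}_0-\vec{F}), \vec{v})_{B_\alpha}$. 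Testing with $\vec{v}=\overline{\vec{u}}$ and separating real and imaginary parts then yields the coercivity bound $\mu_0^{-1}\mu_{r,*}^{-1}\|\nabla\times\vec{u}\|^2_{L^2(B_\alpha)} + \omega\sigma_*\|\vec{u}\|^2_{L^2(B_\alpha)} \lesssim |\mathrm{RHS}|$.

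The residuals on the right are then estimated. A pointwise Taylor expansion of $\nabla\times\vec{E}_0$ about $\vec{z}$ gives $\|\nabla\times(\vec{E}_0-\vec{F})\|_{L^2(B_\alpha)} \leq C\alpha^{7/2}\|\nabla\times\vec{E}_0\|_{W^{2,\infty}(B_\alpha)}$, and (\ref{eqn:e0mfmphiorg}) upgrades this to $\|\vec{E}_0-\vec{F}+\nabla\phi_0\|_{L^2(B_\alpha)} \leq C\alpha^{9/2}\|\nabla\times\vec{E}_0\|_{W^{2,\infty}(B_\alpha)}$. Because $\vec{u}-\nabla\phi_0$ is divergence-free in $B_\alpha$ with vanishing normal trace on $\Gamma_*$ (the structural property underlying (\ref{eqn:orgealphae0wphi0})), integration by parts allows me to rewrite the ohmic residual with $\vec{E}_0-\vec{F}+\nabla\phi_0$ in place of $\vec{E}_0-\vec{F}$. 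The Poincar\'{e}--Friedrichs step $\|\vec{u}-\nabla\phi_0\|_{L^2(B_\alpha)} \leq C\alpha\|\nabla\times\vec{u}\|_{L^2(B_\alpha)}$ then converts the ohmic $L^2$-pairing into one controlled by $\|\nabla\times\vec{u}\|_{L^2(B_\alpha)}$, and a single Young step with parameter $\mu_0^{-1}\mu_{r,*}^{-1}$ absorbs both residuals into the coercivity: the magnetic contribution produces the prefactor $\mu_{r,*}|1-\mu_{r,*}^{-1}|\alpha^{7/2}$ and the ohmic one, using $\omega\sigma_*\mu_0\alpha^2 = \nu$, produces $\mu_{r,*}\nu\alpha^{7/2}$. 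Summing gives (a), and substituting the curl estimate into (\ref{eqn:orgealphae0wphi0}) yields (b).

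The main obstacle is the Young bookkeeping. A naive estimate that absorbs the ohmic residual directly into the $\omega\sigma_*\|\vec{u}\|^2_{L^2(B_\alpha)}$ half of the coercivity only produces the suboptimal prefactor $\mu_{r,*}^{1/2}\nu^{1/2}$; to recover $\mu_{r,*}\nu$ one must first trade $\|\vec{u}\|_{L^2(B_\alpha)}$ for $\alpha\|\nabla\times\vec{u}\|_{L^2(B_\alpha)}$ via (\ref{eqn:orgealphae0wphi0}) and only then absorb into the magnetic coercivity half, which carries the smaller weight $\mu_0^{-1}\mu_{r,*}^{-1}$. This asymmetric use of the two coercivity halves is what synchronises the magnetic and ohmic source contributions at the common order $\alpha^{7/2}$.
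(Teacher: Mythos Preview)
The paper does not give its own proof of this lemma; it is quoted from Ammari \emph{et al.}~\cite{Ammari2014}. Your argument is correct and is precisely the strategy the paper uses to prove the immediate generalisation Lemma~\ref{lemma:energy1} (the case $\sigma_s\ne 0$): form the residual weak equation for $\vec{u}=\vec{E}_\alpha-\vec{E}_0-\vec{w}$, test with $\vec{u}$ corrected by $\nabla\phi_0$ inside $B_\alpha$, bound the right-hand side by Cauchy--Schwarz together with the Taylor remainder estimate (\ref{eqn:e0minFminphi}) and (\ref{eqn:e0mfmphiorg}), and convert the ohmic pairing via $\|\vec{v}\|_{L^2(B_\alpha)}\le C\alpha\|\nabla\times\vec{v}\|_{L^2(B_\alpha)}$ before absorbing into the curl coercivity (exactly the asymmetric absorption you highlight in your last paragraph). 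The only cosmetic difference is that the paper inserts the $\nabla\phi_0$ correction by choosing the piecewise test function $\vec{v}=\vec{u}-\nabla\phi_0$ in $B_\alpha$ and $\vec{v}=\vec{u}$ outside, so that the ohmic residual appears directly as $(\vec{E}_0-\vec{F}+\nabla\phi_0,\vec{v})_{B_\alpha}$, rather than obtaining this form via a separate integration-by-parts step.
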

}
\subsection{An energy estimate for $\sigma_s \ne 0$}

 {If instead $\sigma_s\ne 0$, then from (\ref{eqn:wprob}), we have 
 \begin{subequations}
\begin{align}
\nabla \cdot \sigma_s {\vec w} &= 0 && \text{in $\Omega_s \setminus \overline{B}_\alpha$} ,\\
\nabla \cdot \sigma_* {\vec w} & = \nabla \cdot( \sigma_s - \sigma_*) {\vec F} && \text{in ${B}_\alpha$} ,\\
 {{\vec n} \cdot \sigma_* {\vec w}^- } &  =  {  {\vec n} \cdot  \left ( {\sigma_s - \sigma_*} \right ) {\vec F} } &&  {  \text{on $\Gamma_*$} }.
\end{align}
\end{subequations}
Again by choosing $\nabla \cdot( {\vec F} - \nabla \phi_0 ) =0$ in $B_\alpha$ , where  explicitly, up to a constant,
\begin{align}
\phi_0 = -\frac{\im \omega \mu_0 \sigma_s }{6} ( ({\vec E}_0({\vec z}))_1 (x_1-z_1)^3 +({\vec E}_0({\vec z}))_2 (x_2-z_2)^3 + ({\vec E}_0({\vec z}))_3 (x_3-z_3)^3) , \nonumber
\end{align}
and setting
${\vec n} \cdot  ( \nabla \phi_0 +{\vec E}_0)^-  ={\vec n} \cdot  {\vec F}  ({\vec x})$ on $\Gamma_*$ and $\int_{B_\alpha} \phi_0 \dif {\vec y} =0 $, we have  $ \nabla \cdot( (\sigma_* - \sigma_s) / \sigma_*)  ( {\vec F} - \nabla \phi_0 ) =0$ and, hence, $\nabla \cdot ( - {\vec w} - ((\sigma_* - \sigma_s)/ \sigma_* \nabla \phi_0 )) =0$ in $B_\alpha$ similar to~\cite{Ammari2014,LedgerLionheart2024}. 
Also, similar to~\cite{Ammari2014},
from (\ref{eqn:we0}) we have $\nabla \cdot {\vec E}_0 =0$ in $\Omega_s$ and ${\vec n} \cdot {\vec E}_0^- =0 $ on $\Gamma_s$, and, from (\ref{eqn:wealpha}),  $\nabla \cdot {\vec E}_\alpha  = 0 $ in $\Omega_s$ and ${\vec n} \cdot {\vec E}_\alpha^- =0 $ on $\Gamma_s$. This means that, since $B_\alpha \subset \Omega_s$, }
 {\begin{align}
\| {\vec E}_0 - {\vec F} + \nabla \phi_0\|_{L^2(B_\alpha)} \le & C \alpha \left (  \| \nabla \times ({\vec E}_0 - {\vec F})\|_{L^2(B_\alpha)} + \| {\vec n} \cdot ({\vec E}_0 - {\vec F} + \nabla \phi_0 )\|_{L^2(\Gamma_* )}  \right ) \nonumber\\
\le & C \alpha   \| \nabla \times ({\vec E}_0 - {\vec F})\|_{L^2(B_\alpha)} . \label{eqn:e0mfmphi}
\end{align}
But, instead of (\ref{eqn:orgealphae0wphi0}) we have 
\begin{align}
\left \| {\vec E}_\alpha - {\vec E}_0 - {\vec w} - \left ( \frac{\sigma_*- \sigma_s}{\sigma_*} \right )  \nabla \phi_0 \right \|_{L^2(B_\alpha)} \le C \alpha 
\left ( \| \nabla \times (  {\vec E}_\alpha - {\vec E}_0 - {\vec w}  ) \|_{L^2(B_\alpha)} + \left \| {\vec n} \cdot \left ({\vec E}_\alpha -\frac{\sigma_s}{\sigma_*}  {\vec E}_0 \right ) \right  \|_{L^2(\Gamma_*)}
 \right ),\label{eqn:ealphame0mwmphi}
\end{align}
since ${\vec n}\cdot {\vec w}^- = {\vec n} \cdot \left ( \frac{\sigma_s-\sigma_*}{\sigma_*}  \right ) {\vec F} $ on $\Gamma_*$
and  where ${\vec n} \cdot {\vec E}_\alpha^- = {\vec n} \cdot \frac{\sigma_s}{\sigma_*}  {\vec E}_0^-=0$ on $\Gamma_*$ if $\sigma_s=0$. }

 {For the time being, we assume that 
 \begin{align}
  \left \| {\vec n} \cdot \left ({\vec E}_\alpha -\frac{\sigma_s}{\sigma_*}  {\vec E}_0 \right ) \right  \|_{L^2 (\Gamma_* ) } \le  \| \nabla \times (  {\vec E}_\alpha - {\vec E}_0 - {\vec w}  ) \|_{L^2(B_\alpha)}, \label{eqn:oldcond}
  \end{align}
 which we will replace with a condition of the parameters in the next section. Using this assumption, we obtain the following estimate for buried objects.}

\begin{lemma} \label{lemma:energy1}
Let ${\vec w}$ be the solution to (\ref{eqn:wprob}). Then, by  assuming that (\ref{eqn:oldcond}) holds,  there exists a constant $C$ such that
\begin{subequations}
\begin{align}
\left \| \nabla \times \left ( {\vec E}_\alpha - {\vec E}_0 - {\vec w}   \right) \right \|_{L^2(B_\alpha)} \le & C \alpha^{7/2}
 \mu_{r,*} ( | \mu_{r,s}^{-1} - \mu_{r,*}^{-1} | +\nu)    \| \nabla \times {\vec E}_0 \|_{W^{2,\infty}(B_\alpha)}, \label{oldlemma_engy1} \\
 \left \| {\vec E}_\alpha - {\vec E}_0 - {\vec w} - \left ( \frac{\sigma_*- \sigma_s}{\sigma_*} \right )  \nabla \phi_0 \right \|_{L^2(B_\alpha)} \le & C
  \alpha^{9/2}
 \mu_{r,*} (  | \mu_{r,s}^{-1} - \mu_{r,*} ^{-1} | +\nu)  \| \nabla \times {\vec E}_0 \|_{W^{2,\infty}(B_\alpha)}  \label{oldlemma_engy2}.
 \end{align}
 \end{subequations}
\end{lemma}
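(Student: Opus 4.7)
The plan is to extend the argument that leads to Lemma~\ref{lemma:orgengyest} by suitably modifying the energy test to account for the non-zero soil conductivity and permeability. Setting ${\vec u}:={\vec E}_\alpha-{\vec E}_0-{\vec w}\in\tilde{X}$ and subtracting (\ref{eqn:wprob}) from the difference of (\ref{eqn:wealpha}) and (\ref{eqn:we0}), one obtains the error equation
\[
a({\vec u},{\vec v}) = \mu_0^{-1}\bigl((\mu_{r,s}^{-1}-\mu_{r,*}^{-1})\nabla\times({\vec E}_0-{\vec F}),\nabla\times{\vec v}\bigr)_{B_\alpha}
-\im\omega\bigl((\sigma_s-\sigma_*)({\vec E}_0-{\vec F}),{\vec v}\bigr)_{B_\alpha} \quad \forall {\vec v}\in\tilde{X},
\]
where $a(\cdot,\cdot)$ denotes the sesquilinear form on the left-hand side of (\ref{eqn:wprob}).

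I would then test with ${\vec v}={\vec u}$. The real part of $a({\vec u},{\vec u})$ controls $\mu_0^{-1}\mu_{r,*}^{-1}\|\nabla\times{\vec u}\|^2_{L^2(B_\alpha)}$, while its imaginary part controls $\omega\sigma_*\|{\vec u}\|^2_{L^2(B_\alpha)}$, so $|a({\vec u},{\vec u})|$ provides an upper bound for both. The $\mu$-contribution to the right-hand side is bounded directly by Cauchy--Schwarz together with the second-order Taylor remainder $\|\nabla\times({\vec E}_0-{\vec F})\|_{L^2(B_\alpha)}\le C\alpha^{7/2}\|\nabla\times{\vec E}_0\|_{W^{2,\infty}(B_\alpha)}$, producing the $|\mu_{r,s}^{-1}-\mu_{r,*}^{-1}|$ factor. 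For the $\sigma$-contribution I would use the splitting ${\vec E}_0-{\vec F}=({\vec E}_0-{\vec F}+\nabla\phi_0)-\nabla\phi_0$. The first piece is controlled via (\ref{eqn:e0mfmphi}), giving $\|{\vec E}_0-{\vec F}+\nabla\phi_0\|_{L^2(B_\alpha)}\le C\alpha^{9/2}\|\nabla\times{\vec E}_0\|_{W^{2,\infty}(B_\alpha)}$, and, since $\omega\sigma_*\alpha^2=\nu/\mu_0$ and $|\sigma_s-\sigma_*|\le 2\sigma_*$, the product $\omega|\sigma_s-\sigma_*|\alpha^{9/2}$ yields a $\nu\alpha^{5/2}/\mu_0$ factor multiplying $\|{\vec u}\|$. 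Using (\ref{eqn:ealphame0mwmphi}) together with the hypothesis (\ref{eqn:oldcond}) to replace $\|{\vec u}\|$ by $C\alpha\|\nabla\times{\vec u}\|_{L^2(B_\alpha)}+\|(\sigma_*-\sigma_s)\nabla\phi_0/\sigma_*\|_{L^2(B_\alpha)}$ then converts this into the desired $\nu\alpha^{7/2}$ scaling, and a final use of Young's inequality absorbs the resulting $\|\nabla\times{\vec u}\|^2$ term into the left-hand side to produce (\ref{oldlemma_engy1}). Estimate (\ref{oldlemma_engy2}) then follows immediately by inserting (\ref{oldlemma_engy1}) and (\ref{eqn:oldcond}) into (\ref{eqn:ealphame0mwmphi}).

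The principal difficulty lies in the $\nabla\phi_0$ part of the $\sigma$-contribution. Since $\sigma_s>0$, both $\nabla\cdot{\vec u}$ in $B_\alpha$ (determined from $\sigma_*\nabla\cdot{\vec w}=(\sigma_s-\sigma_*)\nabla\cdot{\vec F}$ and the divergence-free property of ${\vec E}_0,{\vec E}_\alpha$ in $\Omega_s$) and ${\vec n}\cdot{\vec u}|_-$ on $\Gamma_*$ are non-zero, and the latter is only available through the hypothesis (\ref{eqn:oldcond}) combined with the identity ${\vec n}\cdot{\vec u}|_- = {\vec n}\cdot({\vec E}_\alpha-(\sigma_s/\sigma_*){\vec E}_0) - ((\sigma_s-\sigma_*)/\sigma_*){\vec n}\cdot({\vec F}-{\vec E}_0)$. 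Tracking the factors of $\omega,\mu_0,\sigma_s,\sigma_*,\alpha$ through this stage, exploiting $\varepsilon\le\nu$ to bound $\omega\mu_0\sigma_s\alpha^2\le\nu$ and invoking $1\le\mu_{r,s}\le 1+\alpha/D$ where needed, is the most delicate step and is what yields the single $\nu$ factor in the claimed bound.
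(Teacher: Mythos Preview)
Your overall plan---derive the error equation, test against the error, estimate the right-hand side via Cauchy--Schwarz and the Taylor remainders (\ref{eqn:e0mfmphi}), (\ref{eqn:e0minFminphi})---is the same as the paper's. The difference is in the choice of test function, and this difference matters.

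The paper does \emph{not} test with ${\vec u}={\vec E}_\alpha-{\vec E}_0-{\vec w}$. It first moves the $\nabla\phi_0$ term across the error equation so that the $\sigma$-term on the left becomes $-\im\omega\sigma_*\bigl({\vec u}-\tfrac{\sigma_*-\sigma_s}{\sigma_*}\nabla\phi_0,\,{\vec v}\bigr)_{B_\alpha}$ and the right-hand side $\sigma$-term becomes $-\im\omega(\sigma_s-\sigma_*)\bigl({\vec E}_0-{\vec F}+\nabla\phi_0,\,{\vec v}\bigr)_{B_\alpha}$ directly. It then tests with
\[
{\vec v}=\begin{cases}{\vec u}-\dfrac{\sigma_*-\sigma_s}{\sigma_*}\nabla\phi_0 & \text{in }B_\alpha,\\[1ex] {\vec u} & \text{in }{\mathbb R}^3\setminus\overline{B_\alpha}.\end{cases}
\]
This ${\vec v}$ is divergence free in $B_\alpha$ (that is exactly why $\phi_0$ was introduced), so the paper can invoke (\ref{eqn:ealphame0mwmphi}) together with the hypothesis (\ref{eqn:oldcond}) to get $\|{\vec v}\|_{L^2(B_\alpha)}\le C\alpha\|\nabla\times{\vec v}\|_{L^2(B_\alpha)}$ with no leftover $\nabla\phi_0$ term. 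The $\sigma$-contribution on the right is then bounded in one step by $\omega\sigma_*\alpha\,\|\nabla\times({\vec E}_0-{\vec F})\|_{L^2(B_\alpha)}\cdot\alpha\|\nabla\times{\vec v}\|_{L^2(B_\alpha)}\le C\nu\alpha^{7/2}\|\nabla\times{\vec E}_0\|_{W^{2,\infty}}\|\nabla\times{\vec v}\|_{L^2(B_\alpha)}$, and (\ref{oldlemma_engy1}) drops out.

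By contrast, testing with ${\vec u}$ and splitting ${\vec E}_0-{\vec F}=({\vec E}_0-{\vec F}+\nabla\phi_0)-\nabla\phi_0$ as you propose leaves the term $\omega|\sigma_s-\sigma_*|\,(\nabla\phi_0,{\vec u})_{B_\alpha}$ to control. Since ${\vec u}$ is not divergence free in $B_\alpha$, you must write $\|{\vec u}\|\le C\alpha\|\nabla\times{\vec u}\|+\|\tfrac{\sigma_*-\sigma_s}{\sigma_*}\nabla\phi_0\|$, and the $\nabla\phi_0$ piece carries only $\|\nabla\phi_0\|_{L^2(B_\alpha)}\le C\alpha^{7/2}\|\nabla\times{\vec E}_0\|_{W^{2,\infty}}$ (after using $\omega\mu_0\sigma_s{\vec E}_0\sim\nabla\times\nabla\times{\vec E}_0$ in the soil). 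Pairing with $\omega\sigma_*=\nu/(\mu_0\alpha^2)$ then yields a coefficient of order $\nu\alpha^{5/2}$ in front of $\|\nabla\times{\vec u}\|$, one power of $\alpha$ short of what (\ref{oldlemma_engy1}) claims. Your last paragraph acknowledges this as ``the principal difficulty'' but the factor-tracking you describe does not recover the missing $\alpha$. The clean remedy is simply to adopt the paper's test function; then no splitting is needed and the $\nabla\phi_0$ issue never arises.
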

\begin{proof}
First we construct
\begin{align}
( & \mu_0^{-1} \nabla \times ( {\vec E}_\alpha - {\vec E}_0 -{\vec w}),  \nabla \times {\vec v} )_{\Omega_{fs} }+
(  \mu_0^{-1} \mu_{r,s}^{-1} \nabla \times ( {\vec E}_\alpha - {\vec E}_0 - {\vec w}) , \nabla \times {\vec v} )_{\Omega_{s}\setminus \overline{B_\alpha} }\nonumber\\
&+
(  \mu_0^{-1} \mu_{r,*}^{-1} \nabla \times ( {\vec E}_\alpha - {\vec E}_0-{\vec w}) ,  \nabla \times {\vec v})_{B_\alpha } 
- \im \omega \left  (  \sigma_s  \left ( {\vec E}_\alpha - {\vec E}_0-{\vec w}  \right ) , {\vec v} \right )_{\Omega_s \setminus \overline{B_\alpha} }\nonumber\\
&- \im \omega \left (  \sigma_*  \left  ( {\vec E}_\alpha - {\vec E}_0 -{\vec w} -  \left ( \frac{\sigma_* - \sigma_s}{\sigma_*} \right ) \nabla \phi_0 \right ), {\vec v}\right  )_{  {B_\alpha} } \nonumber \\
& = \mu_0^{-1} \left ( (\mu_{r,s}^{-1} - \mu_{r,*}^{-1})  \nabla \times ( {\vec E}_0 - {\vec F}) ,  \nabla \times {\vec v} \right )_{B_\alpha}
-\im \omega ( (\sigma_s-\sigma_*) ( {\vec E}_0 - {\vec F}+ \nabla \phi_0 )  , {\vec v} )_{  {B_\alpha} } , \nonumber
\end{align}
then we choose ${\vec v} = \left \{ \begin{array}{ll} {\vec E}_\alpha - {\vec E}_0 - {\vec w} -  \left ( \frac{\sigma_* - \sigma_s}{\sigma_*} \right ) \nabla \phi_0  & \text{in $B_\alpha$} \\ {\vec E}_\alpha - {\vec E}_0 - {\vec w}  & \text{in ${\mathbb R}^3 \setminus \overline{B_\alpha}$} \end{array} \right .$ and multiply by $\mu_0$ so that
\begin{align}
&  \| \nabla \times ( {\vec E}_\alpha - {\vec E}_0 -{\vec w}) \|^2_{L^2( \Omega_{fs} ) } +  \mu_{r,s}^{-1} \| \nabla \times ( {\vec E}_\alpha - {\vec E}_0 -{\vec w}) \|^2_{L^2( \Omega_{s}\setminus\overline{B_\alpha}  ) } \nonumber\\
& +  \mu_{r,*}^{-1}  \| \nabla \times ( {\vec E}_\alpha - {\vec E}_0 -{\vec w}) \|^2_{L^2( B_\alpha  ) }
- \im \omega \mu_0  \sigma_s  \left  \|    {\vec E}_\alpha - {\vec E}_0-{\vec w}  \right \|^2 _{L^2(\Omega_s \setminus \overline{B_\alpha}) }\nonumber\\
& - \im \omega\mu_0  \sigma_*  \left \|    {\vec E}_\alpha - {\vec E}_0 -{\vec w} -  \left ( \frac{\sigma_* - \sigma_s}{\sigma_*} \right ) \nabla \phi_0 \right \|^2_{  L^2({B_\alpha}) } \nonumber\\
& =  (\mu_{r,s}^{-1} - \mu_{r,*}^{-1}) \left  ( \nabla \times ( {\vec E}_0 - {\vec F}) ,  \nabla \times {\vec v} \right )_{B_\alpha}
-\im \omega \mu_0  ( (\sigma_s-\sigma_*) ( {\vec E}_0 - {\vec F}+ \nabla \phi_0)  , {\vec v} )_{  {B_\alpha} } , \nonumber
\end{align}
and  by the Cauchy-Schwartz inequality
\begin{align}
  \mu_{r,*}^{-1}& \left  \| \nabla \times \left ( {\vec E}_\alpha - {\vec E}_0 -{\vec w} - \left ( \frac{\sigma_* - \sigma_s}{\sigma_*} \right ) \nabla \phi_0  \right ) \right \|^2_{L^2( B_\alpha  ) } = \mu_{r,*}^{-1}  \| \nabla \times ( {\vec E}_\alpha - {\vec E}_0 -{\vec w}) \|^2_{L^2( B_\alpha  ) } \nonumber \\
  & \le \left | \left  ( (\mu_{r,s}^{-1} - \mu_{r,*}^{-1}) \nabla \times ( {\vec E}_0 - {\vec F}) ,  \nabla \times {\vec v} \right )_{B_\alpha} \right |
+ \omega \mu_0 \left |  ( (\sigma_s-\sigma_*) ( {\vec E}_0 - {\vec F}+ \nabla \phi_0 )  , {\vec v} )_{  {B_\alpha} }  \right |. \nonumber
\end{align}
Now
\begin{align}
\left |  (\mu_{r,s}^{-1} - \mu_{r,*}^{-1})\left ( \nabla \times ( {\vec E}_0 - {\vec F}) ,  \nabla \times {\vec v} \right )_{B_\alpha} \right | & \le  | \mu_{r,s}^{-1} - \mu_{r,*}^{-1} | \| \nabla \times ( {\vec E}_0 - {\vec F})\|_{L^2(B_\alpha)} 
\|\nabla \times {\vec v} \|_{L^2(B_\alpha)}  \nonumber \\
& \le C | \mu_{r,s}^{-1} - \mu_{r,*}^{-1} |\alpha^{3/2} \alpha^2 \| \nabla \times {\vec E}_0 \|_{W^{2,\infty}(B_\alpha)} \|\nabla \times {\vec v} \|_{L^2(B_\alpha)}  \nonumber \\
& \le C | \mu_{r,s}^{-1} - \mu_{r,*}^{-1} |\alpha^{7/2} \| \nabla \times {\vec E}_0 \|_{W^{2,\infty}(B_\alpha)} \|\nabla \times {\vec v} \|_{L^2(B_\alpha)},  \nonumber 
\end{align}
and
\begin{align}
\omega \mu_0 \left |  ( (\sigma_s-\sigma_*) ( {\vec E}_0 - {\vec F}+ \nabla \phi_0 )  , {\vec v} )_{  {B_\alpha} }  \right | \le & \omega \mu_0 | \sigma_s-\sigma_*|  \| {\vec E}_0 - {\vec F}+ \nabla \phi_0 \|_{L^2(B_\alpha)} \| {\vec v} \|_{L^2(B_\alpha)} \nonumber\\
\le & C \omega \mu_0 | \sigma_s-\sigma_*|   \alpha \| \nabla \times ({\vec E}_0 - {\vec F} ) \|_{L^2(B_\alpha)} \alpha  \| \nabla \times {\vec v} \|_{L^2(B_\alpha)}  \nonumber\\
\le & C\alpha^2  \omega \mu_0 | \sigma_s-\sigma_*|   \alpha^{3/2} \alpha^2  \| \nabla \times {\vec E}_0 \|_{W^{2,\infty}(B_\alpha)}   \| \nabla \times {\vec v} \|_{L^2(B_\alpha)}  \nonumber\\
\le & C\alpha^{11/2}  \omega \mu_0 | \sigma_s-\sigma_*|  \| \nabla \times {\vec E}_0 \|_{W^{2,\infty}(B_\alpha)}  \| \nabla \times {\vec v} \|_{L^2(B_\alpha)}  \nonumber\\
\le & C\alpha^{7/2} \nu    \| \nabla \times {\vec E}_0 \|_{W^{2,\infty}(B_\alpha)}    \| \nabla \times {\vec v} \|_{L^2(B_\alpha)},  \nonumber
\end{align}
which follows from (\ref{eqn:e0mfmphi}),
\begin{align} 
  \| \nabla \times ({\vec E}_0 - {\vec F} ) \|_{L^2(B_\alpha)} \le C \alpha^{3/2}  \alpha^3 \| \nabla \times {\vec E}_0 \|_{W^{2,\infty}(B_\alpha)},\label{eqn:e0minFminphi}
 \end{align}
 and $ \| {\vec v} \|_{L^2(B_\alpha)} \le C  \alpha  \| \nabla \times {\vec v} \|_{L^2(B_\alpha)} $ and $\alpha^2 \omega \mu_0 | \sigma_s-\sigma_*| \le \nu $.
Thus,
\begin{align}
  & \left  \| \nabla \times \left ( {\vec E}_\alpha - {\vec E}_0 -{\vec w} - \left ( \frac{\sigma_* - \sigma_s}{\sigma_*} \right ) \nabla \phi_0  \right ) \right \|^2_{L^2( B_\alpha  ) }\le
 C \mu_{r,*} \alpha^{7/2} (   | \mu_{r,s}^{-1} - \mu_{r,*}^{-1} | + \nu )\nonumber \\
& \qquad \qquad \qquad \qquad \qquad \qquad \qquad \qquad \qquad \qquad \qquad \qquad     \| \nabla \times {\vec E}_0 \|_{W^{2,\infty}(B_\alpha)}  \| \nabla \times {\vec v} \|_{L^2(B_\alpha)},
\end{align} 
which leads to (\ref{oldlemma_engy1}). Using (\ref{eqn:ealphame0mwmphi})  gives (\ref{oldlemma_engy2}).
\end{proof}

\subsection{Improved energy estimates for $\sigma_s \ne 0$} \label{sect:impenergyest}
 {To circumvent the  assumption (\ref{eqn:oldcond}), a different approach is followed below. This is motivated by considering the soil as a form of regularisation to the non-conducting background case. We illustrate this for ${\vec E}_0$ below. Defining {the bilinear forms}
\begin{align}
a ( {\vec u},{\vec v}) := & ( \nabla \times {\vec u} , \nabla \times {\vec v} )_{\Omega_{fs} \cup \Omega_{s}} ,  \nonumber \\
a^\varepsilon ({\vec u},{\vec v}) := & ( \nabla \times {\vec u} , \nabla \times {\vec v})_ {\Omega_{fs}} + ( \mu_{r,s}^{-1} \nabla \times {\vec u} , \nabla \times {\vec v} )_{ \Omega_{s}  }  - \im \omega \mu_0  ( \sigma_s {\vec u}, {\vec v} )_{ \Omega_s} , \nonumber 
\end{align}
then denoting ${\vec E}_0^\varepsilon \in \tilde{X}$ and  ${\vec E}_0^0 \in  \tilde{X}$ as the solutions to
\begin{align}
a^\varepsilon({\vec E}_0^\varepsilon,{\vec v})  = \im \omega \mu_0 f({\vec v}),  \qquad \text{and} \qquad a ({\vec E}_0^0,{\vec v})  = \im \omega \mu_0 f({\vec v}), \nonumber
\end{align}
with and without soil, respectively, where} {the linear form}  { $f( {\vec v}) :=( {\vec J}^s , {\vec v})_{\Omega_{fs}} $,
we can estimate
\begin{align}
|a ({\vec u},{\vec v})-  a^\varepsilon ({\vec u},{\vec v})| \le & |  1-\mu_{r,s}^{-1} | | (\nabla \times {\vec u}, \nabla \times {\vec v} ) _{\Omega_{s}} +\omega  \mu_0 ( \sigma_s {\vec u}, {\vec v} )_{ \Omega_s}| \nonumber \\
\le & |  1-\mu_{r,s}^{-1} | \| \nabla \times {\vec u}\|_{L^2( \Omega_{s})} \|  \nabla \times {\vec v} \| _{L^2( \Omega_{s}) } +\omega  \mu_0  \sigma_s \| {\vec u}\|_{L^2(  \Omega_s)}
\|  {\vec v} \|_{L^2(  \Omega_s) }  \nonumber .
\end{align}
Since $\tilde{X}$ involves a divergence constraint that can be imposed through a mixed formulation, we have following~\cite{zaglmayrphd}[pg 30-31], \cite{bachlnger2025},
\begin{align}
\| {\vec E}_0^0 - {\vec E}_0^\varepsilon \|_{Y} \le 
C \inf_{{\vec v} \in \tilde{X}} \left ( \|{\vec E}_0^0 - {\vec v}\|_{Y} +\sup_{{\vec w}\in \tilde{X}} \frac{|a ({\vec v},{\vec w})-  a^\varepsilon ({\vec v},{\vec w})|}{
\| {\vec w}\|_Y}
\right), \nonumber
\end{align}
where $Y= {\vec H}(\hbox{curl}, \Omega_s \cup \Omega_{fs})$. Noting ${\vec E}_0^0 \in  \tilde{X}$, we get
\begin{align}
\| {\vec E}_0^0 - {\vec E}_0^\varepsilon \|_{Y} \le C \varepsilon \| {\vec E}_0^0 \|_{Y} , \nonumber
\end{align}
where
 {$\varepsilon = \omega \mu_0 \sigma_sD^2$  (note that $|1- \mu_{r,s}^{-1}|\omega \mu_0 \sigma_sD^2 \le \varepsilon$ since $1\le \mu_{r,s} \le 1+\frac{\alpha}{D}$) and $D$ is an appropriate length scale, which is introduced so that $C$ is dimensionless.}
As ${\vec E}_0^0$ is the unique solution to $a ({\vec E}_0^0,{\vec v})  = \im \omega \mu_0 ( {\vec J}^s , {\vec v})_{\Omega_{fs}},$ we have, via a stability estimate~\cite{zaglmayrphd}[pg 30-31], \cite{bachlnger2025}, 
\begin{align}
\| \nabla \times ({\vec E}_0^0 - {\vec E}_0^\varepsilon) \|_{ L^2 (  \Omega_s ) } \le \| {\vec E}_0^0 - {\vec E}_0^\varepsilon \|_Y \le C \varepsilon \omega \mu_0 \| f\|_{ Y'}, \nonumber
\end{align}
where $C$ is independent of $\varepsilon$. Note that $\| f\|_{Y'} =\displaystyle \sup_{ {\vec v} \in Y \setminus \{ {\vec 0} \} }  \frac{ | f( {\vec v})| }{ \| {\vec v} \| _Y}$.
}

 {
In a similar way, introducing {the bilinear forms}
\begin{align}
b ( {\vec u},{\vec v}) := & ( \nabla \times {\vec u} , \nabla \times {\vec v} )_{\Omega_{fs} \cup \Omega_{s} \setminus \overline{B_\alpha}} +
( \mu_{r,*}^{-1} \nabla \times {\vec u} , \nabla \times {\vec v} )_{B_\alpha}  - \im \omega \mu_0 ( \sigma_* {\vec u} , {\vec v})_{B_\alpha} ,
 \nonumber \\
b^\varepsilon ({\vec u},{\vec v}) := & ( \nabla \times {\vec u} , \nabla \times {\vec v})_ {\Omega_{fs}} + ( \mu_{r,s}^{-1} \nabla \times {\vec u} , \nabla \times {\vec v} )_{ \Omega_{s} \setminus \overline{B_\alpha} } +  ( \mu_{r,*}^{-1} \nabla \times {\vec u} , \nabla \times {\vec v} )_{B_\alpha} \nonumber\\
&  - \im \omega \mu_0  ( \sigma_s {\vec u}, {\vec v} )_{ \Omega_s \setminus \overline{B_\alpha} }
  - \im \omega \mu_0 (\sigma_* {\vec u} , {\vec v})_{B_\alpha} ,
   \nonumber 
\end{align}
and writing ${\vec u}^\varepsilon = \left \{ \begin{array}{ll} {\vec E}_\alpha - {\vec E}_0 - {\vec w} -  \left ( \frac{\sigma_* - \sigma_s}{\sigma_*} \right ) \nabla \phi_0  & \text{in $B_\alpha$} \\ {\vec E}_\alpha - {\vec E}_0 - {\vec w}  & \text{in ${\mathbb R}^3 \setminus \overline{B_\alpha}$} \end{array} \right .$ and ${\vec u}^0 = \left \{ \begin{array}{ll} {\vec E}_\alpha - {\vec E}_0 - {\vec w} -  \nabla \phi_0  & \text{in $B_\alpha$} \\ {\vec E}_\alpha - {\vec E}_0 - {\vec w}  & \text{in ${\mathbb R}^3 \setminus \overline{B_\alpha}$} \end{array} \right .$, which satisfy
\begin{align}
b^\varepsilon({\vec u}^\varepsilon,{\vec v})  =  g({\vec v})  \qquad \text{and} \qquad b ({\vec u}^0,{\vec v})  =  g({\vec v}; \sigma_s=0, \mu_{r,s}=1) , \nonumber
\end{align}
for all ${\vec v}\in  \tilde{X}$ with and without soil, respectively. In the above, 
\begin{align}
g( {\vec v}) := \left ( (\mu_{r,s}^{-1} - \mu_{r,*}^{-1})  \nabla \times ( {\vec E}_0 - {\vec F}) ,  \nabla \times {\vec v} \right )_{B_\alpha}
-\im \omega \mu_0 ( (\sigma_s-\sigma_*) ( {\vec E}_0 - {\vec F}+ \nabla \phi_0 )  , {\vec v} )_{  {B_\alpha} }, \nonumber
\end{align}
which simplifies accordingly if $\mu_{r,s}=1$ and $\sigma_s=0$. Then, following similar steps to the above, 
we can estimate
\begin{align}
\| \nabla \times ({\vec u}^0 - {\vec u}^\varepsilon)  \|_{ L^2 (  B_\alpha ) } \le & \| {\vec u}^0 - {\vec u}^\varepsilon \|_Y  \le C \varepsilon  \| g ({\vec v}; \sigma_s=0,\mu_{r,s}=1)\|_{ Y'}  \label{eqn:regestimate}
\end{align}
 {where again  $\varepsilon  = \omega \mu_0 \sigma_sD^2 $ and $D$ corresponds to the depth of the object}. We use the above to establish the following.
\begin{lemma} \label{lemma:refenergy}
Let ${\vec w}$ be the solution to (\ref{eqn:wprob}). Provided that $\varepsilon  \le  \nu$,  there exists a constant $C$ such that
\begin{subequations}
\begin{align}
\left \| \nabla \times \left ( {\vec E}_\alpha - {\vec E}_0 - {\vec w}   \right) \right \|_{L^2(B_\alpha)} \le & C \alpha^{7/2}
 \mu_{r,*} ( | 1 - \mu_{r,*}^{-1} | +\nu)    \| \nabla \times {\vec E}_0 \|_{W^{2,\infty}(B_\alpha)}, \label{lemma_engy1} \\
 \left \| {\vec E}_\alpha - {\vec E}_0 - {\vec w} - \left ( \frac{\sigma_*- \sigma_s}{\sigma_*} \right )  \nabla \phi_0 \right \|_{L^2(B_\alpha)} \le & C
  \alpha^{9/2}
 \mu_{r,*} (  | 1 - \mu_{r,*} ^{-1} | +\nu)  \| \nabla \times {\vec E}_0 \|_{W^{2,\infty}(B_\alpha)}  \label{lemma_engy2}.
 \end{align}
 \end{subequations}
\end{lemma}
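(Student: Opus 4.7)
The plan is to view the conducting-soil problem as a perturbation of the non-conducting-soil problem already handled in Lemma~\ref{lemma:orgengyest}, with the perturbation measured through the regularisation estimate (\ref{eqn:regestimate}) just established. Writing $\vec{u}^\varepsilon$ and $\vec{u}^0$ for the two piecewise defined residuals introduced above (constructed respectively from the conducting- and non-conducting-soil solutions), Lemma~\ref{lemma:orgengyest} applied directly to $\vec{u}^0$ gives the required bounds with the factor $|1-\mu_{r,*}^{-1}|$: since $\nabla\times\nabla\phi_0=0$, $\|\nabla\times\vec{u}^0\|_{L^2(B_\alpha)}$ agrees with the left-hand side of (\ref{lemma_engy1}), and $\|\vec{u}^0\|_{L^2(B_\alpha)}$ with the left-hand side of (\ref{lemma_engy2}). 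A triangle inequality on $B_\alpha$ therefore reduces the proof to bounding $\vec{u}^\varepsilon - \vec{u}^0$ in the $\vec{H}(\text{curl})$ norm, since both $\|\cdot\|_{L^2(B_\alpha)}$ and $\|\nabla\times\cdot\|_{L^2(B_\alpha)}$ are dominated by $\|\cdot\|_Y$.

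For the perturbation, I would apply (\ref{eqn:regestimate}) to obtain $\|\vec{u}^\varepsilon - \vec{u}^0\|_Y \le C\varepsilon\|g(\cdot;\sigma_s=0,\mu_{r,s}=1)\|_{Y'}$, and then dualise $g$ via the same split used in the proof of Lemma~\ref{lemma:energy1}, now with $\sigma_s=0$ and $\mu_{r,s}=1$. Inserting (\ref{eqn:e0minFminphi}) for the curl-source term and (\ref{eqn:e0mfmphi}) for the $L^2$-source term, and using $\omega\mu_0\sigma_*\alpha^2\le\nu$, should yield $\|g(\cdot;\sigma_s=0,\mu_{r,s}=1)\|_{Y'}\le C\alpha^{7/2}(|1-\mu_{r,*}^{-1}|+\nu)\|\nabla\times\vec{E}_0\|_{W^{2,\infty}(B_\alpha)}$. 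Multiplying by the hypothesis $\varepsilon\le\nu$ then produces a perturbation contribution of order $\alpha^{7/2}\nu(|1-\mu_{r,*}^{-1}|+\nu)\|\nabla\times\vec{E}_0\|_{W^{2,\infty}(B_\alpha)}$, which is of exactly the form of the bound already available on $\vec{u}^0$ and is absorbed into the prefactor $\mu_{r,*}(|1-\mu_{r,*}^{-1}|+\nu)$ on the right of (\ref{lemma_engy1}); the $L^2$-part of the $Y$-norm combined with the second inequality of Lemma~\ref{lemma:orgengyest} delivers (\ref{lemma_engy2}).

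The main obstacle I anticipate is the $\alpha$-scaling in the dualisation of $g$. The naive bound $\|\vec{v}\|_{L^2(B_\alpha)}\le\|\vec{v}\|_Y$ costs one power of $\alpha$ and only delivers $\alpha^{5/2}$ on the $L^2$-source contribution, whereas matching Lemma~\ref{lemma:orgengyest} requires $\alpha^{7/2}$. Recovering this missing factor needs a scaled Friedrichs-type inequality on the small inclusion $B_\alpha$ that exploits the divergence-free gauging built into $\tilde{X}$, together with a careful check that the constant in (\ref{eqn:regestimate}) is dimensionless and independent of $\alpha$ so that the hypothesis $\varepsilon\le\nu$ is enough to close the estimate. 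The remaining work is routine Cauchy--Schwarz and Poincaré bookkeeping, adapted from the proof of Lemma~\ref{lemma:energy1}.
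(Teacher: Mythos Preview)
Your approach is essentially that of the paper: split $\vec{u}^\varepsilon$ via the triangle inequality into $\vec{u}^0$ (handled by Lemma~\ref{lemma:orgengyest}) and the perturbation $\vec{u}^\varepsilon-\vec{u}^0$ (handled by the regularisation estimate (\ref{eqn:regestimate}) together with the bound on $|g(\vec{v};\sigma_s=0,\mu_{r,s}=1)|$ extracted from the proof of Lemma~\ref{lemma:orgengyest}). You also correctly flag the scaled Friedrichs inequality on $B_\alpha$ needed to dualise $g$ with the right power of $\alpha$.

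There is one oversight. For (\ref{lemma_engy2}) you write that ``the $L^2$-part of the $Y$-norm\ldots delivers (\ref{lemma_engy2})'', but the naive bound $\|\vec{u}^\varepsilon-\vec{u}^0\|_{L^2(B_\alpha)}\le\|\vec{u}^\varepsilon-\vec{u}^0\|_Y\le C\varepsilon\,\alpha^{7/2}(\ldots)$ only produces $\nu\alpha^{7/2}$ after invoking $\varepsilon\le\nu$, which is one power of $\alpha$ short of the target $\alpha^{9/2}$. The paper recovers the missing factor by applying the \emph{same} scaled Friedrichs-type inequality a second time, now to the difference $\vec{u}^0-\vec{u}^\varepsilon$ on the small inclusion:
\[
\|\vec{u}^0-\vec{u}^\varepsilon\|_{L^2(B_\alpha)}\le C\alpha\bigl(\|\nabla\times(\vec{u}^0-\vec{u}^\varepsilon)\|_{L^2(B_\alpha)}+\|\vec{n}\cdot(\vec{u}^0-\vec{u}^\varepsilon)\|_{L^2(\Gamma_*)}\bigr)\le C\alpha\,\|\vec{u}^0-\vec{u}^\varepsilon\|_Y,
\]
which then yields $C\alpha\,\varepsilon\,\alpha^{7/2}\le C\nu\alpha^{9/2}$. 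With this extra step your argument matches the paper's.
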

\begin{proof}
By the triangle inequality
\begin{align}
\| \nabla \times {\vec u}^\varepsilon \|_{L^2(B_\alpha) } \le  \| \nabla \times({\vec u}^\varepsilon - {\vec u}^0) \|_{L^2(B_\alpha) } +  \| \nabla \times{\vec u}^0 \|_{L^2(B_\alpha) }, \nonumber
\end{align}
 where we choose  ${\vec u}^\varepsilon = {\vec E}_\alpha - {\vec E}_0 - {\vec w} -  \left ( \frac{\sigma_* - \sigma_s}{\sigma_*} \right ) \nabla \phi_0 $ and ${\vec u}^0 = {\vec E}_\alpha - {\vec E}_0 - {\vec w} - \nabla \phi_0 $ in $B_\alpha$, and ${\vec u}^\varepsilon = {\vec E}_\alpha - {\vec E}_0 - {\vec w}  $ and ${\vec u}^0 = {\vec E}_\alpha - {\vec E}_0 - {\vec w}  $ in ${\mathbb R}^3 \setminus \overline{B_\alpha}$,    with and without the soil present, respectively. Then,  from Lemma~\ref{lemma:orgengyest} for the non-conducting soil,
 \begin{align}
 \| \nabla \times{\vec u}^0 \|_{L^2(B_\alpha) } = \left \| \nabla \times \left ( {\vec E}_\alpha - {\vec E}_0 - {\vec w}   \right) \right \|_{L^2(B_\alpha)} \le & C \alpha^{7/2} 
  \mu_{r,*} ( | 1 - \mu_{r,*}^{-1} | +\nu)    \| \nabla \times {\vec E}_0 \|_{W^{2,\infty}(B_\alpha)}, \nonumber
 \end{align}
 and from the proof of this result 
\begin{align}
 |g ({\vec v}; \sigma_s=0, \mu_{r,s}=1) | \le C \mu_{r,*} \alpha^{7/2} (   | 1 - \mu_{r,*}^{-1} | + \nu   )  \| \nabla \times {\vec E}_0 \|_{W^{2,\infty}(B_\alpha)}  \| \nabla \times {\vec v} \|_{L^2(B_\alpha)}. \nonumber
\end{align}
So, using (\ref{eqn:regestimate}) we obtain (\ref{lemma_engy1}) provided that $\varepsilon \le \nu$.  
{To obtain, (\ref{lemma_engy2}), we again use the triangle inequality
\begin{align}
\|  {\vec u}^\varepsilon \|_{L^2(B_\alpha) } \le  \|  {\vec u}^\varepsilon - {\vec u}^0 \|_{L^2(B_\alpha) } +  \|  {\vec u}^0 \|_{L^2(B_\alpha) }, \nonumber
\end{align}
where, from Lemma~\ref{lemma:orgengyest}  for the non-conducting soil,
 \begin{align}
 \|  {\vec u}^0 \|_{L^2(B_\alpha) } = \left \|  {\vec E}_\alpha - {\vec E}_0 - {\vec w}  -\nabla \phi_0  \right \|_{L^2(B_\alpha)} \le & C \alpha^{9/2} 
  \mu_{r,*} ( | 1 - \mu_{r,*}^{-1} | +\nu)    \| \nabla \times {\vec E}_0 \|_{W^{2,\infty}(B_\alpha)}. \nonumber
 \end{align}
Noting that
\begin{align}
\| {\vec u}^0 - {\vec u}^\varepsilon \|_{L^2(B_\alpha)}  & \le C \alpha \left ( \| \nabla \times  \left ( {\vec u}^0 - {\vec u}^\varepsilon \right ) \|_{L^2(B_\alpha)} + \| {\vec n} \cdot \left ({\vec u} - {\vec u}^\varepsilon \right ) \|_{L^2 (\Gamma_*)} 
\right )\nonumber\\
& \le C \alpha \| {\vec u}^0 - {\vec u}^\varepsilon \|_Y \nonumber \\
& \le C\alpha  \varepsilon  \| g ({\vec v}; \sigma_s=0,\mu_{r,s}=1)\|_{ Y'}  \nonumber,
\end{align}
we obtain the desired result provided that $\varepsilon \le \nu$.
}
\end{proof}
\begin{remark}
 {
Comparing the Lemmas~\ref{lemma:orgengyest} and ~\ref{lemma:energy1} we see they result in the effectively the same estimates with or without the soil and also justify the assumption (\ref{eqn:oldcond})  provided that $\varepsilon \le  \nu$.}
\end{remark}

\subsection{Comparison of problems for ${\vec w}$ with and without soil}
 {
We denote ${\vec w}_{fs}\in \tilde{X}$  as the solution to
\begin{align}
(  \mu_0^{-1} \nabla \times {\vec w}_{fs} ,  \nabla \times {\vec v} )_{\Omega_{fs}\cup \Omega_s \setminus \overline{B_\alpha}  }+&
(  \mu_0^{-1} \mu_{r,*}^{-1} \nabla \times {\vec w}_{fs}  ,  \nabla \times {\vec v})_{B_\alpha } - \im \omega (  \sigma_*   {\vec w}_{fs}  , {\vec v} )_{  {B_\alpha} } \nonumber \\
& = \mu_0^{-1} ( (1 - \mu_{r,*}^{-1}) \nabla \times {\vec F}  ,  \nabla \times {\vec v} )_{B_\alpha}
+\im \omega  (\sigma_*  {\vec F} , {\vec v} )_{  {B_\alpha} } \forall {\vec v} \in \tilde{X} \label{eqn:wproborg},
\end{align}
where ${\vec F}$ corresponds to the case where the soil is present. Proceeding in similar manner to before and
writing ${\vec u}^\varepsilon = \left \{ \begin{array}{ll} {\vec E}_\alpha - {\vec E}_0 - {\vec w}_{fs} -  \left ( \frac{\sigma_* - \sigma_s}{\sigma_*} \right ) \nabla \phi_0  & \text{in $B_\alpha$} \\ {\vec E}_\alpha - {\vec E}_0 - {\vec w}_{fs}  & \text{in ${\mathbb R}^3 \setminus \overline{B_\alpha}$} \end{array} \right .$ and ${\vec u}^0 = \left \{ \begin{array}{ll} {\vec E}_\alpha - {\vec E}_0 - {\vec w} -  \nabla \phi_0  & \text{in $B_\alpha$} \\ {\vec E}_\alpha - {\vec E}_0 - {\vec w}  & \text{in ${\mathbb R}^3 \setminus \overline{B_\alpha}$} \end{array} \right .$, which satisfy
\begin{align}
b^\varepsilon({\vec u}^\varepsilon,{\vec v})  =  h({\vec v}),  \qquad \text{and} \qquad b ({\vec u}^0,{\vec v})  =  h({\vec v}; \sigma_s=0, \mu_{r,s} =1 ), \nonumber
\end{align}
for all ${\vec v}\in  \tilde{X}$. In the above, 
\begin{align}
h( {\vec v}) := & \im \omega \mu_0 (\sigma_s  {\vec w}_{fs} , {\vec v})_{\Omega_s \setminus \overline{B_\alpha} } + (( 1 - \mu_{r,s}^{-1}) \nabla \times {\vec w}_{fs} , \nabla \times {\vec v} )_{\Omega_s \setminus \overline{B_\alpha}}\nonumber\\
&+ \left ( (\mu_{r,s}^{-1} - \mu_{r,*}^{-1})  \nabla \times ( {\vec E}_0 - {\vec F}) ,  \nabla \times {\vec v} \right )_{B_\alpha}
-\im \omega \mu_0 ( (\sigma_s-\sigma_*) ( {\vec E}_0 - {\vec F}+ \nabla \phi_0 )  , {\vec v} )_{  {B_\alpha} }, \nonumber
\end{align}
which simplifies accordingly if $\mu_{r,s}=1$ and $\sigma_s=0$. Then, following similar steps to the above,  we again can estimate
\begin{align}
\| \nabla \times ({\vec u}^0 - {\vec u}^\varepsilon)  \|_{ L^2 (  B_\alpha ) } \le \| {\vec u}^0 - {\vec u}^\varepsilon \|_Y  \le C \varepsilon  \| h({\vec v};\sigma_s=0,\mu_{r,s}=1)\|_{ Y'},\label{eqn:regestimate2}
\end{align}
which we use to obtain the following.
\begin{lemma}~\label{lemma:diffw}
Let ${\vec w}_{fs}$ be the solution to (\ref{eqn:wproborg}). Provided that {$\varepsilon  \le  \nu$},  there exists a constant $C$ such that
\begin{subequations}
\begin{align}
\left \| \nabla \times \left ( {\vec E}_\alpha - {\vec E}_0 - {\vec w}_{fs}   \right) \right \|_{L^2(B_\alpha)} \le & C \alpha^{7/2}
 \mu_{r,*} ( | 1 - \mu_{r,*}^{-1} | +\nu)    \| \nabla \times {\vec E}_0 \|_{W^{2,\infty}(B_\alpha)}, \label{eqn:west1} \\
 \left \| {\vec E}_\alpha - {\vec E}_0 - {\vec w}_{fs} -  \left( \frac{\sigma_*-\sigma_s}{\sigma_*} \right ) \nabla \phi_0 \right \|_{L^2(B_\alpha)} \le & C
   \alpha^{9/2}
 \mu_{r,*} (  | 1 - \mu_{r,*} ^{-1} | +\nu)  \| \nabla \times {\vec E}_0 \|_{W^{2,\infty}(B_\alpha)} 
 \label{eqn:west2}.
 \end{align}
 \end{subequations}
\end{lemma}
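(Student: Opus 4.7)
The plan is to reuse, essentially verbatim, the regularisation argument of Lemma~\ref{lemma:refenergy}, now starting from the auxiliary problem (\ref{eqn:wproborg}) rather than (\ref{eqn:wprob}). The first step is to verify that the ${\vec u}^\varepsilon$ and ${\vec u}^0$ introduced just above the statement really do satisfy $b^\varepsilon({\vec u}^\varepsilon,{\vec v}) = h({\vec v})$ and $b({\vec u}^0,{\vec v}) = h({\vec v};\sigma_s=0,\mu_{r,s}=1)$. This is a direct algebraic computation: subtracting (\ref{eqn:wproborg}) from the difference of the weak forms (\ref{eqn:wealpha}) and (\ref{eqn:we0}), using $\nabla \times \nabla \phi_0 = {\vec 0}$ to absorb the $\nabla \phi_0$ correction inside $B_\alpha$, and collecting the mismatch between the soil bilinear form $b^\varepsilon$ and the free-space form appearing on the left of (\ref{eqn:wproborg}) into the two $\Omega_s\setminus \overline{B_\alpha}$ contributions that form the first two terms of $h$.

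Once this identification is in place, (\ref{eqn:west1}) follows from the triangle inequality
\begin{align*}
\| \nabla \times {\vec u}^\varepsilon \|_{L^2(B_\alpha)} \le \| \nabla \times ({\vec u}^\varepsilon - {\vec u}^0) \|_{L^2(B_\alpha)} + \| \nabla \times {\vec u}^0 \|_{L^2(B_\alpha)} .
\end{align*}
The second summand is bounded directly by Lemma~\ref{lemma:orgengyest}, since in the limit $\sigma_s=0,\mu_{r,s}=1$ the equation for ${\vec u}^0$ reduces to the free-space defining relation for ${\vec E}_\alpha - {\vec E}_0 - {\vec w}$. The first summand is controlled through (\ref{eqn:regestimate2}), which reduces matters to estimating $\|h({\vec v};\sigma_s=0,\mu_{r,s}=1)\|_{Y'}$. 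In that limit the $\Omega_s\setminus \overline{B_\alpha}$ contributions of $h$ drop out and $h$ collapses to the two $B_\alpha$-localised terms $\bigl((1-\mu_{r,*}^{-1})\nabla \times ({\vec E}_0-{\vec F}),\nabla \times {\vec v}\bigr)_{B_\alpha}$ and $\im \omega \mu_0\bigl(\sigma_*({\vec E}_0-{\vec F}+\nabla \phi_0),{\vec v}\bigr)_{B_\alpha}$, which are exactly the two quantities already estimated inside the proof of Lemma~\ref{lemma:energy1}. Reusing those estimates --- via (\ref{eqn:e0mfmphi}), (\ref{eqn:e0minFminphi}) and the Poincar\'e-type bound $\|{\vec v}\|_{L^2(B_\alpha)} \le C\alpha \|\nabla \times {\vec v}\|_{L^2(B_\alpha)}$ --- together with $\varepsilon \le \nu$ absorbs the regularisation factor into $\nu$ and produces (\ref{eqn:west1}). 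The bound (\ref{eqn:west2}) is then obtained by the analogous triangle inequality on $\|{\vec u}^\varepsilon\|_{L^2(B_\alpha)}$: Lemma~\ref{lemma:orgengyest} controls $\|{\vec u}^0\|_{L^2(B_\alpha)}$ at scale $\alpha^{9/2}$, while the difference is handled by the Friedrichs-type inequality $\|{\vec u}^\varepsilon - {\vec u}^0\|_{L^2(B_\alpha)} \le C\alpha \|{\vec u}^\varepsilon - {\vec u}^0\|_Y$ followed by (\ref{eqn:regestimate2}), yielding the extra factor of $\alpha$ needed for the $\alpha^{9/2}$ scaling.

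I expect the main obstacle to be the bookkeeping in the very first step: tracking exactly which right-hand side ${\vec u}^\varepsilon$ and ${\vec u}^0$ inherit under $b^\varepsilon$ and $b$, and checking that the residual terms over $\Omega_s\setminus \overline{B_\alpha}$ generated by the mismatch between the ${\vec w}_{fs}$-problem and the soil-present bilinear form combine, with the correct signs and coefficients, into the first two terms of $h$. A secondary but delicate point is to verify that $\nabla \cdot {\vec u}^0 = 0$ and the requisite normal-trace condition on $\Gamma_*$ hold in the limit setting, so that Lemma~\ref{lemma:orgengyest} and the Friedrichs-type inequality are legitimately applicable to ${\vec u}^0$ and the $\phi_0$ correction defined in Section~\ref{sect:impenergyest} plays its intended role.
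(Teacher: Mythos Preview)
Your proposal is correct and follows essentially the same route as the paper: a triangle inequality splitting $\|\nabla\times{\vec u}^\varepsilon\|$ into $\|\nabla\times{\vec u}^0\|$ (handled by Lemma~\ref{lemma:orgengyest}) and $\|\nabla\times({\vec u}^\varepsilon-{\vec u}^0)\|$ (handled by (\ref{eqn:regestimate2})), together with the key observation that $h({\vec v};\sigma_s=0,\mu_{r,s}=1)=g({\vec v};\sigma_s=0,\mu_{r,s}=1)$, and then the analogous argument for the $L^2$ bound with the extra factor of $\alpha$ coming from the Friedrichs-type inequality. The only cosmetic difference is that the paper cites the estimate on $|g({\vec v};\sigma_s=0,\mu_{r,s}=1)|$ as coming from the proof of Lemma~\ref{lemma:orgengyest} rather than Lemma~\ref{lemma:energy1}, but the bound is the same in either case.
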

\begin{proof}
By the triangle inequality
\begin{align}
\| \nabla \times {\vec u}^\varepsilon\|_{L^2(B_\alpha) } \le  \| \nabla \times({\vec u}^\varepsilon - {\vec u}^0) \|_{L^2(B_\alpha) } +  \| \nabla \times{\vec u}^0 \|_{L^2(B_\alpha) }, \nonumber
\end{align}
 where we choose  ${\vec u}^\varepsilon = {\vec E}_\alpha - {\vec E}_0 - {\vec w}_{fs} -  \left ( \frac{\sigma_* - \sigma_s}{\sigma_*} \right ) \nabla \phi_0 $ and ${\vec u}^0 = {\vec E}_\alpha - {\vec E}_0 - {\vec w} - \nabla \phi_0 $ in $B_\alpha$,
and ${\vec u}^\varepsilon = {\vec E}_\alpha - {\vec E}_0 - {\vec w}  $ and ${\vec u}^0 = {\vec E}_\alpha - {\vec E}_0 - {\vec w}  $ in ${\mathbb R}^3 \setminus \overline{B_\alpha}$.
  Then,  since $g({\vec v}) = h({\vec v})$ for $\sigma_s=0$, $\mu_{r,s}=1$, noting  from the proof of Lemma~\ref{lemma:orgengyest},
\begin{align}
 |g ({\vec v}; \sigma_s=0, \mu_{r,s}=1) | \le C \mu_{r,*} \alpha^{7/2} (   | 1 - \mu_{r,*}^{-1} | + \nu   )  \| \nabla \times {\vec E}_0 \|_{W^{2,\infty}(B_\alpha)}  \| \nabla \times {\vec v} \|_{L^2(B_\alpha)}, \nonumber
\end{align}
and using (\ref{eqn:regestimate2}), we obtain (\ref{eqn:west1}) provided that $\varepsilon \le \nu$. 

{To obtain (\ref{eqn:west2}), we again use the triangle inequality
\begin{align}
\|  {\vec u}^\varepsilon \|_{L^2(B_\alpha) } \le  \|  {\vec u}^\varepsilon - {\vec u}^0 \|_{L^2(B_\alpha) } +  \|  {\vec u}^0 \|_{L^2(B_\alpha) }, \nonumber
\end{align}
where, from Lemma~\ref{lemma:orgengyest}  for the non-conducting soil,
 \begin{align}
 \|  {\vec u}^0 \|_{L^2(B_\alpha) } = \left \|  {\vec E}_\alpha - {\vec E}_0 - {\vec w}  -\nabla \phi_0  \right \|_{L^2(B_\alpha)} \le & C \alpha^{9/2} 
  \mu_{r,*} ( | 1 - \mu_{r,*}^{-1} | +\nu)    \| \nabla \times {\vec E}_0 \|_{W^{2,\infty}(B_\alpha)}, \nonumber
 \end{align}
and noting that
\begin{align}
\| {\vec u}^0 - {\vec u}^\varepsilon \|_{L^2(B_\alpha)}  & \le C \alpha \left ( \| \nabla \times  \left ( {\vec u}^0 - {\vec u}^\varepsilon \right ) \|_{L^2(B_\alpha)} + \| {\vec n} \cdot \left ({\vec u} - {\vec u}^\varepsilon \right ) \|_{L^2 (\Gamma_*)} 
\right )\nonumber\\
& \le C \alpha \| {\vec u}^0 - {\vec u}^\varepsilon \|_Y \nonumber \\
& \le C\alpha  \varepsilon  \| h ({\vec v}; \sigma_s=0,\mu_{r,s}=1)\|_{ Y'}  \nonumber,
\end{align}
we obtain the desired result provided that $\varepsilon \le \nu$.
}
\end{proof}
}

\section{Transmission problems independent of the object position} \label{sect:transprob}
In a similar way to~\cite{Ammari2014}, we write ${\vec w}_{fs} = \alpha {\vec w}_{0,fs}( ({\vec x} - {\vec z})/\alpha)$ where ${\vec w}_{0,fs} ({\vec \xi})$ satisfies
\begin{subequations}\label{eqn:wo0orgprob}
\begin{align}
\nabla_\xi \times \mu_{r,s}^{-1} \nabla_\xi \times {\vec w}_{0,fs} - \im \omega \mu_0 \alpha^2  \sigma_* {\vec w}_{0,fs}  = & \im \omega \mu_0 \alpha^2 \sigma_* [ \alpha^{-1} {\vec F}( {\vec z} + \alpha {\vec \xi})] && \text{in $B$ } ,\\
\nabla_\xi \times \nabla_\xi \times {\vec w}_{0,fs}  =& {\vec 0} && \text{in $B^c$},\\
\nabla_\xi  \cdot {\vec w}_{0,fs}  =& 0 && \text{in $B^c$}, \\
[{\vec n} \times {\vec w}_{0,fs} ]_\Gamma ={\vec 0}, [ {\vec n} \times \mu_r^{-1} \nabla \times {\vec w}_{0,fs} ]_\Gamma= &-\im \omega \mu_0 \mu_{r,s} ( 1- \mu_r^{-1}) {\vec n} \times ( {\vec H}_0 ({\vec z}) + \alpha {\vec D} {\vec H}_0 ( {\vec z}) {\vec \xi}) && \text{on $\Gamma:= \partial B$}, \\
{\vec w}_{0,fs} ({\vec \xi}) = & O(|{\vec \xi}|^{-1}) && \text{as $|{\vec \xi}| \to \infty$},
\end{align}
\end{subequations}
and ${\vec \xi}$ has an origin in $B$.  It then follows from Lemma~\ref{lemma:diffw} that

 {
\begin{lemma}
Let ${\vec w}$ be the solution to and ${\vec w}_{0,fs}$ be the solution to  (\ref{eqn:wo0orgprob}). Provided that  $\varepsilon \le  \nu$, there exists a constant $C$ such that
\begin{subequations}
\begin{align}
\left \| \nabla \times \left ( {\vec E}_\alpha - {\vec E}_0 - \alpha{\vec w}_{0,fs}  \left ( \frac{{\vec x}-{\vec z}}{\alpha} \right )  \right) \right \|_{L^2(B_\alpha)} \le & C \alpha^{7/2}
 \mu_{r,*} ( | 1 - \mu_{r,*}^{-1} | +\nu)    \| \nabla \times {\vec E}_0 \|_{W^{2,\infty}(B_\alpha)},\label{thm:wdiff1} \\
 \left \| {\vec E}_\alpha - {\vec E}_0 -\alpha {\vec w}_{0,fs} \left ( \frac{{\vec x}-{\vec z}}{\alpha} \right )-  \left( \frac{\sigma_*-\sigma_s}{\sigma_*} \right )\nabla \phi_0 \right \|_{L^2(B_\alpha)} \le & C
  \alpha^{9/2}
 \mu_{r,*} (  | 1 - \mu_{r,*} ^{-1} | +\nu)  \| \nabla \times {\vec E}_0 \|_{W^{2,\infty}(B_\alpha)}\label{thm:wdiff2}.
\end{align}
\end{subequations}
\end{lemma}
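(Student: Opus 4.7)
The plan is to derive this statement as an immediate consequence of Lemma~\ref{lemma:diffw}, once the scaling identity
\[
{\vec w}_{fs}({\vec x}) \;=\; \alpha\,{\vec w}_{0,fs}\!\left(\frac{{\vec x}-{\vec z}}{\alpha}\right), \qquad {\vec x}\in{\mathbb R}^3,
\]
has been established. Substituting this identity into (\ref{eqn:west1})--(\ref{eqn:west2}) of Lemma~\ref{lemma:diffw} then yields (\ref{thm:wdiff1})--(\ref{thm:wdiff2}) directly, under the same hypothesis $\varepsilon\le\nu$ and with the same constant~$C$.

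To verify the scaling identity, I would introduce the rescaled variable ${\vec\xi}=({\vec x}-{\vec z})/\alpha$, which maps $B_\alpha$ onto the reference domain $B$ and $\Gamma_*$ onto $\Gamma:=\partial B$, and set ${\vec w}_{0,fs}({\vec\xi}) := \alpha^{-1}{\vec w}_{fs}({\vec z}+\alpha{\vec\xi})$. Using $\nabla_x=\alpha^{-1}\nabla_\xi$ together with the Jacobian $\alpha^3$ in volume integrals (respectively $\alpha^2$ on surfaces), substitution into the weak form (\ref{eqn:wproborg}) with rescaled test functions ${\vec v}({\vec x})=\alpha\,\tilde{\vec v}(({\vec x}-{\vec z})/\alpha)$ shows that every term rescales homogeneously in $\alpha$. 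The source term on the right-hand side reproduces $\alpha^{-1}{\vec F}({\vec z}+\alpha{\vec\xi})$ in the interior, while the interface data on $\Gamma$ arise from the first expression in (\ref{eqn:feqns}) combined with $\nabla\times{\vec E}_0=\im\omega\mu_0\mu_{r,s}{\vec H}_0$ in the soil, which yields $\nabla_x\times{\vec F}=\im\omega\mu_0\mu_{r,s}({\vec H}_0({\vec z})+\alpha\,{\vec D}{\vec H}_0({\vec z}){\vec\xi})$ inside $B_\alpha$. By uniqueness of solutions in $\tilde X$, the so-defined ${\vec w}_{0,fs}$ must coincide with the unique solution of (\ref{eqn:wo0orgprob}), confirming the identity.

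With the scaling identity in hand, the two inequalities follow by a direct rewriting of those already proved in Lemma~\ref{lemma:diffw}: substitute ${\vec w}_{fs}({\vec x})=\alpha\,{\vec w}_{0,fs}(({\vec x}-{\vec z})/\alpha)$ into (\ref{eqn:west1}) to obtain (\ref{thm:wdiff1}), and into (\ref{eqn:west2}) to obtain (\ref{thm:wdiff2}), with the same hypothesis $\varepsilon\le\nu$ and without any change of constant.

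The main obstacle is the careful bookkeeping in matching every coefficient produced by the change of variables to the ones prescribed in (\ref{eqn:wo0orgprob}), in particular the $\alpha^2$ factor multiplying the conductivity terms and the prefactor $\im\omega\mu_0\mu_{r,s}(1-\mu_r^{-1})$ appearing in the tangential jump condition on $\Gamma$. No additional analytic estimates beyond those of Lemma~\ref{lemma:diffw} are required: once the algebraic scaling has been verified, the statement reduces to a substitution.
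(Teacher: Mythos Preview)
Your proposal is correct and matches the paper's approach exactly: the paper simply asserts the scaling identity ${\vec w}_{fs}({\vec x})=\alpha{\vec w}_{0,fs}(({\vec x}-{\vec z})/\alpha)$ (with (\ref{eqn:wo0orgprob}) as the defining transmission problem for ${\vec w}_{0,fs}$) and then states that the lemma ``follows from Lemma~\ref{lemma:diffw}''. Your added verification of the scaling via change of variables is a welcome elaboration of what the paper leaves implicit.
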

}

Given that $B_\alpha \subset \Omega_s$, we have
\begin{align}
\alpha^{-1} {\vec F} ( {\vec z} + \alpha {\vec \xi}) = \im \omega  \mu_0 \mu_{r,s}
\left (\sum_{i=1}^3 \frac{1}{2} ({\vec H}_0 ({\vec z}))_i {\vec e}_i \times {\vec \xi}  +\sum_{i,j=1}^3  \frac{\alpha}{3} ({\vec D}{\vec H}_0 )_{ij} ({\vec e}_i \otimes {\vec e}_j ){\vec \xi} \times {\vec \xi}
 \right ) ,
  \label{eqn:alpham1F}
\end{align}
and similarly,
\begin{align}
{\vec w}_{0,fs}({\vec \xi})= \im \omega \mu_0 \mu_{r,s}  \left ( \sum_{i=1}^3 \frac{1}{2} ({\vec H}({\vec z}))_i {\vec \theta}_i ({\vec \xi}) + \sum_{i,j=1}^3 \frac{\alpha}{3} ( {\vec D}{\vec H}({\vec z}))_{ij} {\vec \psi}_{ij}({\vec \xi}) \right ) .
\label{eqn:expandw0fs}
\end{align}
As in (3.15) in~\cite{Ammari2014}, ${\vec \theta}_i$ satisfies
\begin{subequations} \label{eqn:thetatrans}
\begin{align}
\nabla_\xi\times  \mu_{r,*}^{-1} \nabla_\xi \times {\vec \theta}_i - \im \omega \mu_0 \alpha^2 \sigma_* {\vec \theta}_i = & \im  \omega \mu_0 \alpha^2 \sigma_* {\vec e}_i \times {\vec \xi} && \text{In $B$ },\\
\nabla_\xi \times \nabla_\xi \times {\vec \theta}_i = & {\vec 0} && \text{In $B^c$}, \\
\nabla_\xi \cdot  {\vec \theta}_i = & {\vec 0} && \text{In $B^c$}, \\
[{\vec n} \times {\vec \theta}_i]_\Gamma = {\vec 0}, [ {\vec n} \times \mu_r^{-1} \nabla_\xi \times {\vec \theta}_i ]_\Gamma = &- 2 (1-\mu_{r,*}^{-1}){\vec e}_i \times {\vec n} && \text{on $\Gamma$},  \\
{\vec \theta}_i = & O(1/|{\vec \xi} |) && \text{as $|{\vec \xi}| \to \infty$},
\end{align}
\end{subequations}
with ${\vec \psi}_{ij}$ satisfying the transmission problem stated in (3.16) in~\cite{Ammari2014}. Importantly, ${\vec \theta}_i$ is independent of $\sigma_s$ and $\mu_{r,s}$ and its solution does not depend on the  position of $B_\alpha$  in $\Omega_s$. We refer to Section 3 of~\cite{Ammari2014} for further properties regarding ${\vec \theta}_i$. 

\section{Integral representation formulae} \label{sect:integalrep}
In the previous work~\cite{Ammari2014}, an integral representation formula has been obtained to represent fields exterior to a bounded object $B_\alpha$  where the background medium corresponds to non-conducting free space with permeability $\mu_0$. In~\cite{LedgerLionheart2018} we have shown,  that since the perturbed magnetic vector potential $\Delta {\vec A}= {\vec A}_\alpha - {\vec A}_0$, where ${\vec H}_\alpha = \mu_0^{-1} \nabla \times {\vec A}_\alpha$ and ${\vec H}_0 = \mu_0^{-1} \nabla \times {\vec A}_0$ for ${\vec x} \in {\mathbb R}^3 \setminus \overline{B_\alpha}$,  satisfies $\nabla \times \mu_0^{-1} \nabla \times \Delta {\vec A} ={\vec 0}$, this representation formula can  be written in the form
\begin{align}
\Delta {\vec A} ({\vec x}) = \mu_0 \int_{B_\alpha} {\mathcal G}_0({\vec x},{\vec y}) {\vec J}_e ({\vec y}) \dif {\vec y} - \mu_0 \int_{B_\alpha} (\nabla_x \times {\mathcal G}_0({\vec x},{\vec y})  {\vec M}_e ({\vec y})) \dif {\vec y} ,
\label{eqn:apertdy}
\end{align}
for ${\vec x} \in {\mathbb R}^3 \setminus \overline{B_\alpha}$
where ${\mathcal G}_0({\vec x},{\vec y}) = G_0 ({\vec x},{\vec y}) {\mathbb I}$, $G_0 ({\vec x},{\vec y}) = \frac{1}{4 \pi | {\vec x}- {\vec y}|}$ is the free space Laplace Green's function satisfying 
\begin{align}
\nabla^2  G_0 ({\vec x},{\vec y}) = - \delta({\vec x}- {\vec y}), \nonumber
\end{align}
and ${\vec J}_e = \im \omega \sigma _*{\vec E}_\alpha$ and ${\vec M}_e = (\mu_{r,*}-1)  {\vec H}_\alpha$
 being  effective electric and magnetic source currents inside the object, respectively. The perturbed magnetic field exterior to the object $\Delta {\vec H}({\vec x}) = ({\vec H}_\alpha - {\vec H}_0)({\vec x})$ is then
\begin{align}
\Delta {\vec H}({\vec x})  & =\mu_0^{-1}  \nabla_x \times \Delta {\vec A} ({\vec x}) \nonumber\\
& =  \int_{B_\alpha}\nabla_x \times (  {\mathcal G}_0({\vec x},{\vec y}) {\vec J}_e ({\vec y}))  \dif {\vec y} - \int_{B_\alpha} \nabla_x \times \nabla_x \times (  {\mathcal G}_0({\vec x},{\vec y}) {\vec M}_e ({\vec y}))  \dif {\vec y} \label{eqn:hpertdy} \\
&=\int_{B_\alpha} \sigma_*  \nabla_x  G_0({\vec x},{\vec y}) \times {\vec E}_\alpha({\vec y}) \dif {\vec y} +   \int_{B_\alpha} (\mu_{r,*}-1) {\vec D}_x^2 G_0({\vec x}, {\vec z})  {\vec H}_\alpha({\vec y}) \dif {\vec y} ,\label{eqn:hpert}
\end{align}
since $\nabla_x \times \nabla_x \times ({ G}_0({\vec x},{\vec y}){\vec M}_e({\vec y})) =  - {\vec D}_x^2 { G}({\vec x},{\vec y}) {\vec M}_e({\vec y})$. The form of  (\ref{eqn:hpertdy}) and (\ref{eqn:apertdy}) is consistent with the those found in the literature in non-destructive testing e.g.~\cite{bowler1987} and geophysical exploration~\cite{raiche1974}, but with different Green's functions.

In our  present situation, the background medium is inhomogeneous due to the presence of the soil and consequently a different Green's function is needed. The new Green's function will be constructed to take account of the soil-air interface with the resulting dyadic Green's function ${\mathcal G}({\vec x},{\vec y})$ no longer being a multiple of identity in general and instead is the solution of 
\begin{subequations} \label{eqn:Greentrans}
\begin{align}
\nabla_x \times  \nabla_x \times {\mathcal G} ({\vec x},{\vec y})  = &  {\vec 0} && \text{in $\Omega_{fs} $},  \\
\nabla_x \times \mu_{r,s}^{-1}  \nabla_x \times {\mathcal G} ({\vec x},{\vec y}) -  \im \omega \mu_0 \sigma_s {\mathcal  G} ({\vec x},{\vec y}) = & {\vec 0} && \text{in $\Omega_s \setminus \overline{B_\alpha}$},   \\
\nabla_x \times  \mu_{r,s}^{-1} \nabla_x \times {\mathcal G} ({\vec x},{\vec y}) -  \im \omega \mu_0 \sigma_s  {\mathcal  G} ({\vec x},{\vec y}) = & -  \mu_0 \delta ({\vec x},{\vec y}) {\mathbb I} && \text{in $B_\alpha$},  \\
[{\vec n} \times {\mathcal G} ({\vec x},{\vec y}) ]_{\Gamma} = {\vec 0} , 
[{\vec n} \times \mu_{r}^{-1}\nabla_x \times {\mathcal G} ({\vec x},{\vec y}) ]_{\Gamma}  = &  {\vec 0} , && \text{on $\Gamma={\Gamma_s,\Gamma_* } $},  \\
\nabla_y \cdot  {\mathcal G} ({\vec x},{\vec y}) = & 0 && \text{in ${\mathbb R}^3 $},  \\
{\mathcal G} ({\vec x},{\vec y}) = & O(1/|{\vec y} | ) && \text{as $|{\vec y}|\to \infty$},
\end{align}
\end{subequations}
and satisfying the symmetry condition $\mu_r ({\vec y}) {\mathcal G} ({\vec x},{\vec y}) = \mu_r({\vec x}) ({\mathcal G} ({\vec y},{\vec x}) )^T$.
 Given that the Green's function will be constructed to take account of the soil-air interface,  the effective electric current in the object is ${\vec J}_e = \im \omega ( \sigma_* - \sigma_s) {\vec E}_\alpha$ and the corresponding effective magnetic current is ${\vec M}_e = (\mu_{r,*}-\mu_{r,s} )  {\vec H}_\alpha$ and we have
 \begin{align}
\Delta {\vec H}({\vec x})  & =\mu_0^{-1}  \nabla_x \times \Delta {\vec A} ({\vec x}) 
=  \int_{B_\alpha }\nabla_x \times (  {\mathcal G}({\vec x},{\vec y}) {\vec J}_e ({\vec y}))  \dif {\vec y} - \int_{B_\alpha} \nabla_x \times \nabla_x \times (  {\mathcal G}({\vec x},{\vec y}) {\vec M}_e ({\vec y}))  \dif {\vec y}, \nonumber
\end{align}
for some appropriate Green's function $ {\mathcal G}({\vec x},{\vec y})$ with ${\vec x} \in \Omega_{fs}$.

Fixing $\Gamma_s$ to be the plane $x_3=0$, Raiche and Coggon~\cite{raiche1975} consider the construction of dyadic Green's function for the perturbed electric and magnetic   fields for the case where $\mu_{r,s}=\mu_{r,*}=1$
\begin{subequations}
\begin{align}
\Delta {\vec E} ({\vec x}) =& \im \omega \mu_0 \int_{target} {\mathcal G}_E({\vec x},{\vec y}) {\vec J}_e({\vec y}) \dif {\vec y} \label{eqn:Efildgeenbowler} , \\
\Delta {\vec H} ({\vec x}) =&  \int_{target} \nabla_x \times ({\mathcal G}_E({\vec x},{\vec y}) {\vec J}_e({\vec y}) ) \dif {\vec y} \label{eqn:Hfildgeenbowler} ,
\end{align}
\end{subequations}
for a  related eddy current problem in geophysics~\cite{raiche1974} and have obtained an explicit expression for ${\mathcal G} ({\vec x},{\vec y})$ when  ${\vec x} =(0,0,x_3) \in \Omega_s$~\cite{raiche1975}. It is also related to the work of Bowler {\it et. al}~\cite{bowler1987,bowler1991}, who considers a problem with  half spaces for  air  and the conductor and, given the presence of a small volumetric flaw in the conductor with  ${\vec x}$ in the conductor, they find~\cite{bowler1991}
\begin{subequations}
\begin{align}
\Delta {\vec E} ({\vec x}) =& \im \omega \mu_0 \int_{flaw} {\mathcal G}_E({\vec x},{\vec y}) {\vec P}({\vec y}) \dif {\vec y} \label{eqn:bowlerscat} , \\
\Delta {\vec H} ({\vec x}) =& \int_{flaw} \nabla \times ({\mathcal G}_E({\vec x},{\vec y}) {\vec P}({\vec y})) \dif {\vec y} ,
\end{align}
\end{subequations}
where ${\vec P}$ is an induced current dipole density due to a non-conducting flaw within the conductor (again assuming the conductor and flaw have permeability $\mu_0$). Explicitly~\cite{bowler1991}
\begin{align}
{\mathcal G}_E({\vec x},{\vec y}) = {\mathcal G}_k  ({\vec x},{\vec y})  + {\mathcal G}_i ({\vec x},{\vec y} ) + \frac{1}{k^2} \nabla_x \times   {\vec e}_3 \nabla_y \times (V ({\vec x}, {\vec y}){\vec e}_3), \label{eqn:bowlergreen}
\end{align}
where
\begin{align}
 {\mathcal G}_k  ({\vec x},{\vec y}) &:= \left ( {\mathbb I} + \frac{1}{k^2}  \nabla_x \nabla_x  \right )  G_k ({\vec x},{\vec y} ) ,  \nonumber \\
 {\mathcal G}_i  ({\vec x},{\vec y})  &: = \left ( {\mathbb I} \myprime- \frac{1}{k^2}  \nabla_x \nabla_y  \right )  G_k ({\vec x},{\vec y}\myprime) , \nonumber \\
  G_k ({\vec x},{\vec y}) &:=  \frac{e^{\im k | {\vec x} - {\vec y}| }}{ 4 \pi | {\vec x} - {\vec y} | } , \nonumber \\
  V &:= \frac{k^2}{(2\pi)^2 }  \int_{-\infty}^\infty \int_{-\infty}^\infty \frac{1}{\kappa \gamma( \kappa + \gamma)} e^{- \gamma |x_3 + y_3| + \im u ( x_1-y_1) + \im v(x_2-y_2)} \dif u \dif v , \nonumber
\end{align}
with $k=- \im \omega \mu_0 \sigma_s$, ${\vec y}\myprime = {\vec y} -2 y_3 {\vec e}_3$,  ${\mathbb I}\myprime = \text{diag} (1,1,-1)$,  $\kappa= \sqrt{u ^2+v^2} $ and $\gamma=\sqrt{\kappa^2-k^2}$. In the case of surface cracks, Bowler and Harfield obtain a very similar result for ${\mathcal G}_E({\vec x},{\vec y})$ for $\mu_{r,s} \ne 1$ when considering ${\vec x}$ in the conducting host~\cite{bowler1998}.
 While related, these results do not provide the Green's function we require since our interest lies in evaluation of $ \Delta{\vec H}$  for ${\vec x}\in \Omega_{fs}$ and consequently we need an expression for the Green's function valid for evaluation for ${\vec x}$ in this region. Note that given the soil properties, we also expect  ${\mathcal G}({\vec x},{\vec y})$ to be close to ${\mathcal G}_0({\vec x},{\vec y})= G_0({\vec x}, {\vec y}) {\mathbb I}$ . The Green's function we require is obtained in the following Lemma, which follows similar steps to the derivations by Bowler~\cite{bowler1987, bowler2004}.

\begin{lemma}
The solution ${\mathcal G}( {\vec x},{\vec y} )$ to 
(\ref{eqn:Greentrans}) at positions ${\vec x}\in\Omega_{fs}$, when $\Gamma_s$ is the surface $x_3=0$, is of the form
\begin{align}
{\mathcal G} = G_s({\vec x},{\vec y}) {\mathbb I}, 
\end{align}
where
\begin{align}
 G_s({\vec x},{\vec y})  = - \nabla_t^2 U\myprime=  & \frac{1}{(2\pi)^2} \int_{- \infty}^{\infty} \int_{-\infty}^{\infty} \frac{ 1 } {  \gamma+ \mu_{r,s} \kappa } e^{-\gamma x_3 +\kappa y_3 + \im u (x_1-y_1) + \im v ( x_2- y_2)  }\dif u \dif v, \label{eqn:greengs}
\end{align}
for $x_3>0$ with $\kappa= \sqrt{u^2+v^2}$,  $\gamma = \sqrt{\kappa^2 -k^2}$, taking roots with positive real parts and  $k^2 = \im \omega \mu_0 \sigma_s$.
\end{lemma}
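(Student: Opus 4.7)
The plan is to exploit the translation invariance of the soil--air interface in the transverse variables to reduce the transmission problem (\ref{eqn:Greentrans}) to a family of ODEs in $x_3$, following the Hertz-potential strategy of Bowler~\cite{bowler1987,bowler2004}. Fix a source point ${\vec y}\in B_\alpha\subset\Omega_s$. In $\Omega_{fs}$ the identity $\nabla\times\nabla\times=\nabla(\nabla\cdot)-\nabla^2$ together with the divergence-free condition $\nabla_y\cdot{\mathcal G}=0$ reduces each column of ${\mathcal G}$ to a solution of $\nabla^2{\mathcal G}={\vec 0}$. In $\Omega_s\setminus\{{\vec y}\}$ the same identity, together with constant material coefficients, gives a vector Helmholtz-type equation with wavenumber $k^2=\im\omega\mu_0\sigma_s$ and a delta-function jump concentrated at ${\vec y}$ in $B_\alpha$.

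Next, I would take the 2D Fourier transform in $x_1-y_1$ and $x_2-y_2$ with dual variables $(u,v)$. The transverse Laplacian becomes $-\kappa^2$ with $\kappa=\sqrt{u^2+v^2}$, so each component of $\hat{\mathcal G}$ satisfies a constant-coefficient ODE in $x_3$ with characteristic roots $\pm\kappa$ in $\Omega_{fs}$ and $\pm\gamma$ in $\Omega_s$, where $\gamma=\sqrt{\kappa^2-k^2}$ is chosen with positive real part. The decay conditions at infinity select one exponential in each half-space, and the source contributes the usual jump in $\partial_{x_3}\hat{\mathcal G}$ at $x_3=y_3$, giving a primary wave in $\Omega_s$ proportional to $e^{-\gamma|x_3-y_3|}/(2\gamma)$. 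Imposing the transmission conditions $[{\vec n}\times{\mathcal G}]_{\Gamma_s}={\vec 0}$ and $[{\vec n}\times\mu_r^{-1}\nabla_x\times{\mathcal G}]_{\Gamma_s}={\vec 0}$ at $x_3=0$ then yields a small linear system for the reflected and transmitted amplitudes; the Fresnel-type denominator $\gamma+\mu_{r,s}\kappa$ emerges from the jump in $\mu_r^{-1}\partial_{x_3}\hat{\mathcal G}$ and reflects the mismatch of effective wave impedances across the interface. Collecting the transmitted amplitude on the free-space side and inverting the Fourier transform produces the integral (\ref{eqn:greengs}).

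The most delicate step is verifying that for ${\vec x}\in\Omega_{fs}$ the transmitted dyadic reduces to a scalar multiple of the identity, ${\mathcal G}=G_s{\mathbb I}$, rather than a genuine dyadic. The plan here is to represent each column of ${\mathcal G}$ in $\Omega_{fs}$ through a single scalar Hertz-like potential $U\myprime$, with $G_s=-\nabla_t^2 U\myprime$ in the notation of (\ref{eqn:greengs}). This representation is available because $\sigma=0$ in $\Omega_{fs}$ forces each column of ${\mathcal G}$ to be the gradient of a harmonic scalar, and because the rotational symmetry of the planar interface about the $x_3$-axis decouples the three columns so that the same scalar potential serves all of them. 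This is precisely the Hertz-potential mechanism invoked by Bowler~\cite{bowler1987,bowler2004} in the complementary problem where the observation point lies in the conductor. Beyond this structural point, the remaining work consists of routine separation-of-variables calculations and direct verification of the transmission conditions, which is why I expect Step~4 to be the main obstacle: the scalar-times-identity reduction is not automatic for half-space dyadic Green's functions and has to be pinned down from the insulating nature of $\Omega_{fs}$.
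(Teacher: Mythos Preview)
Your overall framework---Fourier transform in the transverse variables, reduction to ODEs in $x_3$, matching across $x_3=0$---is the same as the paper's, and you correctly single out the scalar-times-identity reduction as the crux. But your proposed justification for that reduction is wrong, and the paper's proof supplies a different and sharper mechanism.

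You claim that $\sigma=0$ in $\Omega_{fs}$ ``forces each column of ${\mathcal G}$ to be the gradient of a harmonic scalar''. This is false: the equation in $\Omega_{fs}$ is $\nabla_x\times\nabla_x\times{\mathcal G}={\vec 0}$, which says only that $\nabla_x\times{\mathcal G}$ is curl-free, not that ${\mathcal G}$ itself is a gradient. Indeed, the answer $G_s{\vec e}_j$ is manifestly \emph{not} a gradient for a nontrivial $G_s$. Your appeal to rotational symmetry to make ``the same scalar potential serve all three columns'' is also too vague: the three columns correspond to three distinct source orientations and do not become identical by symmetry alone.

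The paper's argument avoids this by never working directly with the columns of ${\mathcal G}$. Instead it represents the physical fields via a TE/TM Hertz decomposition relative to ${\vec e}_3$,
\[
\Delta{\vec H}({\vec x})=\nabla\times\nabla\times(\psi'\,{\vec e}_3)+\frac{k_i^2}{\mu_{r,i}k_0^2}\,\nabla\times(\psi''\,{\vec e}_3),
\]
and observes that in $\Omega_{fs}$ the TM coefficient $k_1^2/(\mu_{r,1}k_0^2)$ vanishes because $k_1=0$; moreover the interface condition $[\sigma\psi'']=0$ forces the TM transmission coefficient $T''=0$. Hence only the TE potential $\psi'$ survives for $\Delta{\vec H}$ in air, and $\psi'$ is built from a single scalar $U'$ solving a scalar transmission problem whose Fourier solution yields $T'=2\gamma/(\gamma+\mu_{r,s}\kappa)$ and thus $G_s=-\nabla_t^2U'$. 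The scalar form is therefore a consequence of the TE/TM split together with $k_1=0$, not of any gradient structure of ${\mathcal G}$. If you attempt your direct route, the $3\times3$ ODE system you obtain after Fourier transforming will not decouple until you perform exactly this TE/TM diagonalisation, so you will be led back to the paper's argument anyway.
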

\begin{proof}
We follow similar steps to~\cite{bowler2004} and focus on determining the field due to an effective source current ${\vec J}_e$ with support in $B_\alpha \subset\Omega_s$. Following their notation, we  denote the soil region as $\Omega_0$ and the air region as $\Omega_1$ (equivalent to $\Omega_s$ and $\Omega_{fs}$, respectively) with $k_0^2 = - \im \omega \mu_0 \sigma_s$ and $k_1 =0$ and fix the surface $\Gamma_s$ as $x_3=0$. The electric and magnetic fields can then be expressed in terms of TE and TM terms as
\begin{align}
\Delta{\vec E}({\vec x})  =& -\im \omega \mu_0 \mu_{r,i} \left ( \nabla \times ( \psi\myprime ({\vec x})  {\vec e}_3 ) + \frac{1}{\mu_{r,i} k_0^2} \nabla \times \nabla \times ( \psi \mydprime ({\vec x}) ){\vec e}_3 \right ) , \nonumber\\
\Delta {\vec H} ({\vec x})  =&\nabla \times \nabla \times ( \psi\myprime ({\vec x})  {\vec e}_3 ) + \frac{k_i^2}{\mu_{r,i} k_0^2} \nabla \times ( \psi \mydprime ({\vec x}) {\vec e}_3) , \nonumber
\end{align} 
where the indiex $i$ refer to ${\vec x}\in\Omega_i$ and with the TM term vanishes for the magnetic field in air.
The potentials can be represented as
\begin{align}
\psi\myprime ({\vec x}) = \int_{B_\alpha} \nabla_y \times ( U\myprime ({\vec x},{\vec y}) {\vec e}_3) \cdot {\vec J}_e({\vec y}) \dif {\vec y} , \nonumber\\
\psi\mydprime ({\vec x}) = \int_{B_\alpha} \nabla_y \times \nabla_y \times ( U\mydprime ({\vec x},{\vec y}) {\vec e}_3) \cdot {\vec J}_e({\vec y}) \dif {\vec y} \nonumber ,
\end{align}
and  satisfy the interface conditions 
\begin{align}
[ \mu_r \psi \myprime ] & =0, && [\sigma \psi \mydprime ] =0, \nonumber \\
\left [\frac{ \partial  \psi \myprime}{\partial x_3} \right  ] & =0, &&\left  [ \frac{\partial \psi \mydprime}{\partial x_3} \right  ] =0,  \nonumber
\end{align}
on $x_3=0$,
which follow from $[{\vec n} \times \Delta{\vec E}] = [{\vec n} \times \Delta{ \vec H} ] = {\vec 0}$ on $\Gamma_s$ with ${\vec n} ={\vec e}_3$ and using
\begin{align}
{\vec e}_3 \times \nabla \times ( \psi {\vec e}_3) =&  \nabla_t \psi , \nonumber \\
{\vec e}_3 \times \nabla \times \nabla \times  ( \psi {\vec e}_3)  =  & - \frac{\partial}{\partial x_3} \nabla_t \times ( \psi {\vec e}_3). \nonumber
\end{align}
We  look for Green function representations of the form 
\begin{align}
g ({\vec x}, {\vec y})  =  \nabla_t^2 U({\vec x},{\vec y}), \nonumber
\end{align}
with different representations for ${\vec x}\in \Omega_0$ and ${\vec x}\in \Omega_{1}$.
Continuing to follow~\cite{bowler2004}, solutions of the form
\begin{align}
(\nabla^2 +k^2 ) g({\vec x},{\vec y}) = \left \{ \begin{array}{cc} 0 & \text{$x_3 > 0$}\\ -\delta ({\vec x}-{\vec y}) & \text{$x_3 < 0$} \end{array} \right . \nonumber ,
\end{align}
where $k=k_0$ subject to $[\alpha g]_{\Gamma_s}=0 $ and $[\partial g/ \partial x_3]_{\Gamma_s}=0$ on $x_3=0$ are sought leading to two dimensional Fourier transformed Greens functions
\begin{subequations}
\begin{align}
\tilde{g} ( \kappa, x_3 ,y_3) & = \frac{1}{2 \gamma} \left ( e^{- \gamma  |x_3 - y_3 |} + \Gamma e^{\gamma (x + y_3 )} \right ) && x_3 < 0, \\
\tilde{g} ( \kappa, x_3 ,y_3 ) & = \frac{1}{2 \gamma} T  e^{- \kappa x_3 + \gamma y_3  } && x_3 > 0 , \label{eqn:Uzpos}
 \end{align}
 \end{subequations}
 where 
 \begin{align}
 g({\vec x},{\vec y}) = \frac{1}{(2\pi)^2} \int_{-\infty}^\infty  \int_{-\infty}^\infty  \tilde{g}( \kappa,x_3,y_3) e^{\im u (x_1-y_1) + \im v ( x_2- y_2) } \dif u \dif v , \label{eqn:greengsint}
 \end{align}
 and $\kappa= \sqrt{u^2+v^2}$, $u,v$ are Fourier variables, and $\gamma = \sqrt{\kappa^2 -k^2}$ (roots with real parts). For the TE mode
 \begin{align}
 \Gamma \myprime= \frac{ \gamma - \mu_{r,0}\kappa}{ \gamma+\mu_{r,0}  \kappa}, \qquad T\myprime = \frac{2  \gamma}{ \gamma+ \mu_{r,0} \kappa}, \nonumber
 \end{align}
 and for the TM mode
 \begin{align}
 \Gamma \mydprime =-1, \qquad T\mydprime = 0, \nonumber
 \end{align}
 which allows the construction of the functions $U\myprime ({\vec x},{\vec y} )$ and $U\mydprime ({\vec x},{\vec y} )$ and, hence, $\Delta {\vec E}  ({\vec x})$ for ${\vec x} \in \Omega_0$. This in turn can be written in the form of (\ref{eqn:Efildgeenbowler}) and leads to ${\mathcal G}_E$ stated in (\ref{eqn:bowlergreen}) (see also Remark~\ref{remark:noncondgreen} below, which deals with the case where $\Omega_0$ (i.e $\Omega_s$) reduces to free space).
 On the other hand, considering  $\Delta {\vec H} ({\vec x})$   for ${\vec x} \in \Omega_{1} $ (i.e. ${\vec x} \in \Omega_{fs} $) using (\ref{eqn:Uzpos}) and the integral (\ref{eqn:greengsint}) we obtain
 \begin{align}
 U\myprime({\vec x},{\vec y}) = \frac{1}{(2\pi)^2} p.v. \int_{- \infty}^{\infty} \int_{-\infty}^{\infty} \frac{ 1 } { \kappa^2 ( \gamma+ \mu_{r,s} \kappa) } e^{ - \gamma x_3 + \kappa y_3 + \im u (x_1-y_1) + \im v ( x_2- y_2)  }\dif u \dif v . \nonumber
 \end{align}
 While this integral has a singularity at $\kappa=0$, this is avoided as in practice by taking tangential derivatives of  $U\myprime$. Hence, the representation of the magnetic field in terms of a scalar function $G_s ( {\vec x}, {\vec y} ) =  \nabla_t^2 U\myprime ({\vec x},{\vec y})$ stated in (\ref{eqn:greengs}), which can be computed in a similar way to~\cite{bowler1987}.
\end{proof}

\begin{remark}\label{remark:noncondgreen}
For the limiting case of a non-conducting soil, we have   $\sigma_s=0$ so that $k_0=0$ and $\gamma=\kappa$, then,  if additionally $\mu_{r,s}=\mu_{r,0}=1$ we have $\Gamma\myprime=0$ and $T\myprime=1$ so that
\begin{align}
U\myprime ({\vec x},{\vec y} ) =U\mydprime ({\vec x},{\vec y} ) = U_0 ({\vec x},{\vec y} )   = \frac{1}{(2\pi)^2} p.v. \int_{-\infty}^\infty  \int_{-\infty}^\infty   \frac{1}{2\kappa^3} e^{-\kappa (x_3-y_3)+ \im u (x_1-y_1) + \im v ( x_2- y_2) } \dif u \dif v ,
\end{align}
and
\begin{align}
G_0({\vec x},{\vec y} ) = \frac{1}{4\pi|{\vec x}- {\vec y} |} = - \nabla_t^2 U_0 =  \frac{1}{(2\pi)^2} p.v, \int_{- \infty}^{\infty} \int_{-\infty}^{\infty} \frac{1}{2\kappa } e^{- \kappa ( x_3 -  y_3) + \im u (x_1-y_2) + \im v ( x_2- y_2) }\dif u \dif v ,
\end{align}
being the Laplace Green's function~\cite{bowler2004}.  The singularity at $\kappa=0$ is avoided by taking the Cauchy principal value. For example, for ${\vec x} \in \Omega_{fs}$ and ${\vec J}_e$ with support  $B_\alpha \subset \Omega_s$,
\begin{align}
\Delta {\vec H}  ({\vec x}) & = \nabla \times \nabla \times \left ( \psi\myprime ({\vec x})  {\vec e}_3 )  \right ) \nonumber\\
& =  \nabla_x \times \nabla_x \times \left ( \int_{B_\alpha} \nabla _y \times (U\myprime ({\vec y}) {\vec e}_3) \cdot {\vec J}_e ({\vec y}) \dif {\vec y} \right ) \nonumber\\
& =   -\left ( \int_{B_\alpha} \nabla _y G_0({\vec x},{\vec y}) \times {\vec J}_e ({\vec y}) \dif {\vec y} \right ) , 
\end{align}
which, follows by applying similar arguments to~\cite{bowler1987, bowler2004} and agrees with (\ref{eqn:hpert}). 
Additionally, as $\sigma_s \to 0$ (and/or $\omega \to 0$) then $k_0 \to 0 $, and consequently $\gamma \to \kappa$ so that $U\myprime ({\vec x},{\vec y}) \to U_0 ({\vec x},{\vec y})$ and, hence, $G_s ({\vec x},{\vec y}) \to G_0({\vec x},{\vec y})$ as $k_0 \to 0$.
\end{remark}

Thus, we have for ${\vec x} \in \Omega_{fs}$,
\begin{align}
\Delta {\vec H}({\vec x})  =\int_{B_\alpha} (\sigma_*-\sigma_s)  \nabla_x G_s({\vec x},{\vec y}) \times {\vec E}_\alpha ({\vec y})  \dif {\vec y} +   \int_{B_\alpha} (\mu_{r,*}-\mu_{r,s}) {\vec D}_x^2 G_s({\vec x}, {\vec z})  {\vec H}_\alpha ({\vec y}) \dif {\vec y}, \label{eqn:hfield[ertsoil}
\end{align}
which is the starting point of the asymptotic expansion derived in the next section.

\section{Asymptotic formulae} \label{sect:asymform}

In this section we prove the following result.

\begin{theorem}
Let $\nu$ be order one and let $\alpha$ be small with $\epsilon\le \nu$. For ${\vec x} \in \Omega_{fs}$  {and objects away from $\Gamma_s$}  
\begin{align}
({\vec H}_\alpha - {\vec H}_0)({\vec x}) = & - \im \nu \alpha^3   \sum_{i=1}^3\int_{B} {\vec D}_x^2 G_s ({\vec x},{\vec z}){\vec \xi} \times ( {\vec e}_i  \times {\vec \xi} + {\vec \theta}_i ) \dif {\vec \xi} ({\vec H}_0({\vec z}))_i 
\nonumber \\
& + \left (1- \frac{1 }{ \mu_{r,*} }  \right )  \alpha^3  \sum_{i=1}^3 \int_B {\vec D}_x^2 G_s(  {\vec x}, {\vec z}) \left ( {\vec e}_i + \frac{1}{2} \nabla \times {\vec \theta}_i  \right ) \dif {\vec \xi} ({\vec H}_0({\vec z}))_i  + {\vec R}({\vec x}) ,
\label{eqn:mainresult}
\end{align}
 {where $ | {\vec R} ( {\vec x} ) | \le C \alpha^4  \|{\vec H}_0 \|_{W^{2,\infty}(B_\alpha)} $ uniformly in ${\vec x}$ in a compact set away from $B_\alpha$.}
\end{theorem}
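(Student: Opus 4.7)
The plan is to start from the integral representation (\ref{eqn:hfield[ertsoil}) and write $\Delta {\vec H}({\vec x}) = I_1 + I_2$, where $I_1$ is the ``electric source'' integral weighted by $(\sigma_*-\sigma_s)$ and $I_2$ is the ``magnetic source'' integral weighted by $(\mu_{r,*}-\mu_{r,s})$. Into both I would substitute the asymptotic decompositions coming from Lemma~\ref{lemma:diffw} combined with (\ref{eqn:e0mfmphi}): in $B_\alpha$, ${\vec E}_\alpha = {\vec F} + {\vec w}_{fs} - (\sigma_s/\sigma_*)\nabla\phi_0 + {\vec r}_{\vec E}$ with $\|{\vec r}_{\vec E}\|_{L^2(B_\alpha)} = O(\alpha^{9/2})$, and ${\vec H}_\alpha = (\im\omega\mu_0\mu_{r,*})^{-1}\nabla\times{\vec E}_\alpha + {\vec r}_{\vec H}$ with curl-error $O(\alpha^{7/2})$. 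After the rescaling ${\vec \xi}=({\vec y}-{\vec z})/\alpha$, the explicit forms (\ref{eqn:alpham1F}) and (\ref{eqn:expandw0fs}) deliver the pointwise profiles
\[
{\vec E}_\alpha({\vec z}+\alpha{\vec \xi})\approx \tfrac{\im\omega\mu_0\mu_{r,s}\alpha}{2}\sum_i({\vec H}_0({\vec z}))_i({\vec e}_i\times{\vec \xi}+{\vec \theta}_i),\quad
{\vec H}_\alpha({\vec z}+\alpha{\vec \xi})\approx \tfrac{\mu_{r,s}}{\mu_{r,*}}\sum_i({\vec H}_0({\vec z}))_i({\vec e}_i+\tfrac{1}{2}\nabla_\xi\times{\vec \theta}_i),
\]
with $B$ taken with centre of mass at the origin, plus errors in $\alpha$ tracked via the lemma.

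Next, I would Taylor expand the Green's function factors about ${\vec y}={\vec z}$. Smoothness of $G_s({\vec x},\cdot)$ on a neighbourhood of ${\vec z}$ is guaranteed because the object lies away from $\Gamma_s$ and ${\vec x}$ is in a compact subset of $\Omega_{fs}$ separated from $B_\alpha$. Using $\nabla_y G_s({\vec x},{\vec z})=-\nabla_x G_s({\vec x},{\vec z})+\text{(interface correction)}$, one obtains
\[
\nabla_x G_s({\vec x},{\vec z}+\alpha{\vec \xi})=\nabla_x G_s({\vec x},{\vec z})-\alpha\,{\vec D}_x^2 G_s({\vec x},{\vec z}){\vec \xi}+O(\alpha^2),\quad
{\vec D}_x^2 G_s({\vec x},{\vec z}+\alpha{\vec \xi})={\vec D}_x^2 G_s({\vec x},{\vec z})+O(\alpha).
\]
Inserting into $I_2$, together with $(\mu_{r,*}-\mu_{r,s})(\mu_{r,s}/\mu_{r,*})=1-\mu_{r,*}^{-1}+O(\alpha/D)$ (using $\mu_{r,s}=1+O(\alpha/D)$), directly produces the stated second term at order $\alpha^3$. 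Inserting into $I_1$, after using $(\sigma_*-\sigma_s)\omega\mu_0\alpha^2=\nu+O(\varepsilon)$ with $\varepsilon\le\nu$, the Taylor correction proportional to $\alpha{\vec D}_x^2 G_s({\vec x},{\vec z}){\vec \xi}$ paired against the leading $\alpha$-sized profile of ${\vec E}_\alpha$ yields the first term of the stated expansion with coefficient $-\im\nu\alpha^3$.

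The main obstacle will be ruling out the a priori dominant $O(\alpha^2\nu)$ contribution to $I_1$ produced by the zeroth-order Green's function Taylor term $\nabla_x G_s({\vec x},{\vec z})$. To handle this I would work with $I_1$ in its curl form $I_1=(\sigma_*-\sigma_s)\nabla_x\times\int_{B_\alpha}G_s({\vec x},{\vec y}){\vec E}_\alpha({\vec y})\,d{\vec y}$, so that the zeroth-Taylor piece becomes $\nabla_x G_s({\vec x},{\vec z})\times\int_{B_\alpha}{\vec E}_\alpha d{\vec y}$; this first moment vanishes at the required order by the divergence-free character of ${\vec E}_\alpha$ in $B_\alpha$, a divergence-theorem argument reducing the volume integral to small boundary data, and the parity of ${\vec e}_i\times{\vec \xi}+{\vec \theta}_i$ for $B$ centred at the origin. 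Finally, ${\vec R}({\vec x})$ is bounded by combining (i)~Cauchy--Schwarz applied to the $L^2(B_\alpha)$-residuals of Lemma~\ref{lemma:diffw} against the smooth factors $\nabla_x G_s$ and ${\vec D}_x^2 G_s$, contributing $O(\alpha^{9/2+3/2})$ and $O(\alpha^{7/2+3/2})$ respectively, (ii)~the quadratic Taylor remainders of the Green's function derivatives, contributing $O(\alpha^5)$ after the $\alpha^3$ volume scaling, and (iii)~the $\nabla\phi_0$ correction proportional to $\sigma_s/\sigma_*=O(\varepsilon/\nu)$, controlled by $\varepsilon\le\nu$. Collecting yields $|{\vec R}({\vec x})|\le C\alpha^4\|{\vec H}_0\|_{W^{2,\infty}(B_\alpha)}$ uniformly for ${\vec x}$ in a compact subset of $\Omega_{fs}\setminus\overline{B_\alpha}$.
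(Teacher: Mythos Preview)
Your overall strategy mirrors the paper's: start from the integral representation (\ref{eqn:hfield[ertsoil}), substitute the asymptotic profiles for ${\vec E}_\alpha$ and ${\vec H}_\alpha$ supplied by Lemma~\ref{lemma:diffw} together with (\ref{eqn:alpham1F})--(\ref{eqn:expandw0fs}), Taylor-expand the Green's function factors about ${\vec y}={\vec z}$, and control the remainders via Cauchy--Schwarz and the energy estimates. Your two-term split $I_1+I_2$ is a harmless variant of the paper's three-term split ${\vec I}_1+{\vec I}_2+{\vec I}_3$ (the paper separates the $\sigma_*$-- and $\sigma_s$--weighted electric pieces and shows the latter is entirely residual).

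There is, however, a genuine gap at the step you yourself flag as the main obstacle. After inserting the leading profile, the zeroth-order Taylor contribution to $I_1$ is
\[
\tfrac{\im}{2}\,\nu\,\alpha^2\mu_{r,s}\,\nabla_x G_s({\vec x},{\vec z})\times\sum_{i=1}^3({\vec H}_0({\vec z}))_i\!\int_B\bigl({\vec e}_i\times{\vec \xi}+{\vec \theta}_i\bigr)\dif{\vec \xi},
\]
which is $O(\alpha^2)$ unless the integral vanishes. Your proposed justification does not work for a general object. Placing the centre of mass of $B$ at the origin gives $\int_B{\vec e}_i\times{\vec \xi}\,\dif{\vec \xi}=0$, but ${\vec \theta}_i$ has no parity structure for an arbitrarily shaped $B$; a parity argument would require $B$ to be centrally symmetric, which neither the theorem nor the paper assumes. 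Nor does $\nabla\cdot{\vec E}_\alpha=0$ in $B_\alpha$ reduce $\int_{B_\alpha}{\vec E}_\alpha$ to small boundary data in any useful way. The paper defers this cancellation to~\cite{Ammari2014}, where the identity $\int_B({\vec e}_i\times{\vec \xi}+{\vec \theta}_i)\,\dif{\vec \xi}=0$ is obtained from the structure of the transmission problem (\ref{eqn:thetatrans}): integrate the interior equation against a constant vector ${\vec c}$, convert the curl--curl term to a boundary integral, and use the jump condition on $\Gamma$ together with $\nabla\times\nabla\times{\vec \theta}_i=0$ in $B^c$ and the decay ${\vec \theta}_i=O(|{\vec \xi}|^{-1})$ to see that this boundary contribution vanishes, leaving $\im\nu\int_B({\vec e}_i\times{\vec \xi}+{\vec \theta}_i)\cdot{\vec c}\,\dif{\vec \xi}=0$ for all constant ${\vec c}$. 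Replace your parity argument with this structural identity and the rest of your plan goes through essentially as in the paper.
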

\begin{proof}
From (\ref{eqn:hfield[ertsoil}) we can write
\begin{align}
\tilde{\vec H}_\alpha ({\vec x})= & {\vec I}_1 + {\vec I}_2 + {\vec I}_3, 
  \nonumber
\end{align}
where
\begin{align}
{\vec I}_1  &  :=  \int_{ {B_\alpha} }  \sigma_*  \nabla_x G_s ({\vec x},{\vec y}) \times {\vec E}_\alpha \dif {\vec y} ,  \nonumber\\
{\vec I}_2  &  :=  -\int_{ {B_\alpha} }  \sigma_s  \nabla_x G_s ({\vec x},{\vec y}) \times {\vec E}_\alpha \dif {\vec y} ,  \nonumber\\
{\vec I}_3  &  :=  
 -\int_{B_\alpha} \left ( \mu_{r,s} - \mu_{r,*} \right ) {\vec D}_x^2 G_s  ({\vec x},{\vec y}) {\vec H}_\alpha \dif {\vec y} ,
 \nonumber 
 \end{align}

\noindent{\bf Consider term ${\vec I}_1$} \\
We write ${\vec I}_1$ as
\begin{align}
{\vec I}_1  = {\vec I}_{1,1} +  {\vec I}_{1,2} + {\vec I}_{1,3} + {\vec I}_{1,4} , \nonumber  
\end{align} 
where
\begin{align}
{\vec I}_{1,1}  & := \int_{B_\alpha}  \sigma_* \nabla_x G_s ( {\vec x}, {\vec y}) \times \left ( {\vec E}_\alpha - {\vec E}_0 - {\vec w}_{fs} - \left ( \frac{\sigma_*-\sigma_s}{\sigma_*}\right )  \nabla \phi_0 \right ) \dif {\vec y} ,  \nonumber \\
{\vec I}_{1,2}  & := \int_{B_\alpha}  \sigma_* \nabla_x G_s ( {\vec x}, {\vec y}) \times \left ( {\vec E}_0 + \nabla \phi_0 -{\vec F} ({\vec y}) \right ) \dif {\vec y}  
-\int_{B_\alpha}  \sigma_s \nabla_x G_s ( {\vec x}, {\vec y})\times \nabla \phi_0 \dif {\vec y} ,
 \nonumber \\
{\vec I}_{1,3}  & := \int_{B_\alpha}  \sigma_* \left (  \nabla_x G_s ( {\vec x}, {\vec y}) - \nabla_x G_s ( {\vec x}, {\vec z})   + {\vec D}^2_x G_s ({\vec x}, {\vec y})  ({\vec y}- {\vec z})
\right )
 \times \left ( {\vec F} ({\vec y} ) + \alpha {\vec w}_{0, fs} \left( \frac{y-z}{\alpha} \right ) \right ) \dif {\vec y},  \nonumber \\
{\vec I}_{1,4}  & := \int_{B_\alpha} \sigma_* \left (   \nabla_x G_s ( {\vec x}, {\vec z})   - {\vec D}^2_x G_s ({\vec x}, {\vec y})  ({\vec y}- {\vec z})
\right )
 \times \left ( {\vec F} ({\vec y} ) + \alpha {\vec w}_{0, fs} \left( \frac{y-z}{\alpha} \right ) \right ) \dif {\vec y} .  \nonumber
 \end{align}
 First considering ${\vec I}_{1,1} $, using (\ref{thm:wdiff2}) we estimate that
\begin{align}
 | {\vec I}_{1,1}  | \le & C  \sigma_*  \alpha^{3/2} \left  \| {\vec E}_\alpha - {\vec E}_0 - {\vec w}_{fs} -   \left ( \frac{\sigma_*-\sigma_s}{\sigma_*}\right ) \nabla \phi_0 \right \|_{L^2(B_\alpha)}  \nonumber \\
 \le & C   \sigma_*   \alpha^6 ( \mu_{r,*} |1-  \mu_{r,*}^{-1} | +\mu_{r,*} \nu ) \| \nabla \times {\vec E}_0 \|_{W^{2,\infty}(B_\alpha)} 
  \nonumber \\
 \le & C   \nu \alpha^4 \mu_{r,s}
 ( \mu_{r,*} | 1-  \mu_{r,*}^{-1} | +\mu_{r,*} \nu  ) \|  {\vec H}_0 \|_{W^{2,\infty}(B_\alpha)} 
   \nonumber  \\
 \le & C   \nu \alpha^4  \|  {\vec H}_0 \|_{W^{2,\infty}(B_\alpha)} , \nonumber  
 \end{align}
 Secondly, considering ${\vec I}_{1,2} $, using (\ref{eqn:e0mfmphiorg}), (\ref{eqn:e0mfmphi}) we estimate that
 \begin{align}
 | {\vec I}_{1,2}  | \le & C \alpha^{3/2}   \left ( \sigma_* \left  \| {\vec E}_0 + \nabla \phi_0 -{\vec F} ({\vec y} ) \right \|_{L^2(B_\alpha)}  + \sigma_s \|  \nabla \phi_0  \|_{L^2(B_\alpha)}  \right )
   \nonumber \\
 \le & C \alpha^{3/2}\left ( \sigma_* \alpha  \alpha^{7/2}\| \nabla \times {\vec E}_0 \|_{W^{2,\infty}(B_\alpha)} + \varepsilon  \frac{\alpha^{5/2}}{D^2} \|  \nabla \times {\vec H}_0 \|_{W^{1,\infty}(B_\alpha)} \right )\nonumber \\
 \le & C \left (  \nu \alpha^4 \mu_{r,s} \|  {\vec H}_0 \|_{W^{2,\infty}(B_\alpha)} + \nu  \frac{\alpha^{4} }{D^2} \| \nabla \times {\vec H}_0 \|_{W^{1,\infty}(B_\alpha)}  \right )  , \nonumber \\
 \le & C   \nu \alpha^4  \|  {\vec H}_0 \|_{W^{2,\infty}(B_\alpha)} \nonumber
 \end{align}
 Thirdly, considering  ${\vec I}_{1,3} $ using (\ref{eqn:alpham1F}) and (\ref{eqn:expandw0fs})  we have
  \begin{align}
 | {\vec I}_{1,3}  | \le & C  \sigma_* \alpha^{3/2} \alpha^2 \left \| {\vec F} ({\vec y} ) + \alpha {\vec w}_{0, fs} \left( \frac{y-z}{\alpha} \right ) \right \|_{L^2(B_\alpha)} \nonumber \\
 \le & C  \sigma_* \alpha^{7/2} \alpha^{5/2}  \omega \mu_0 \mu_{r,s}  \|{\vec H}_0 \|_{W^{2,\infty}(B_\alpha)} \nonumber \\
 \le & C \nu \alpha^4  \|{\vec H}_0 \|_{W^{2,\infty}(B_\alpha)} . \nonumber
 \end{align}
 Finally, following similar steps to~\cite{Ammari2014}
   leads to
 \begin{align}
 {\vec I}_{1,4} =  - \im \nu \alpha^3 \mu_{r,s}  \sum_{i=1}^3\int_{B} {\vec D}^2 G_s ({\vec x},{\vec z}){\vec \xi} \times ( {\vec e}_i  \times {\vec \xi} + {\vec \theta}_i ) \dif {\vec \xi} ({\vec H}_0({\vec z}))_i +{\vec R}({\vec x}), \nonumber
 \end{align}
  {with $|{\vec R}({\vec x}) | \le C \alpha^4   \|{\vec H}_0 \|_{W^{2,\infty}(B_\alpha)} $ and since { $| \mu_{r,s}-1 | \le  \frac{\alpha}{D}  $ then}
 \begin{align}
 {\vec I}_{1,4} =  - \im \nu \alpha^3   \sum_{i=1}^3\int_{B} {\vec D}^2 G_s ({\vec x},{\vec z}){\vec \xi} \times ( {\vec e}_i  \times {\vec \xi} + {\vec \theta}_i ) \dif {\vec \xi} ({\vec H}_0({\vec z}))_i +{\vec R}({\vec x}),
 \end{align}
 Furthermore,  since   $| {\vec I}_{1,1} +  {\vec I}_{1,2} + {\vec I}_{1,3}   | \le C \alpha^4  \|{\vec H}_0 \|_{W^{2,\infty}(B_\alpha)} $ they are also be grouped with ${\vec R}({\vec x})$.}

\noindent{\bf Consider term ${\vec I}_2$} \\
We write ${\vec I}_2$ as
\begin{align}
{\vec I}_2  = {\vec I}_{2,1} +  {\vec I}_{2,2} + {\vec I}_{2,3} + {\vec I}_{2,4}
, \nonumber 
\end{align} 
where
\begin{align}
{\vec I}_{2,1}  & := \int_{B_\alpha}  \sigma_s \nabla_x G_s ( {\vec x}, {\vec y}) \times \left ( {\vec E}_\alpha - {\vec E}_0 - {\vec w}_{fs} -\left ( \frac{\sigma_*-\sigma_s}{\sigma_*} \right )
\nabla \phi_0 \right ) \dif {\vec y} ,  \nonumber \\
{\vec I}_{2,2}  & := \int_{B_\alpha}  \sigma_s \nabla_x G_s ( {\vec x}, {\vec y}) \times \left ( {\vec E}_0 + \nabla \phi_0 -{\vec F} ({\vec y}) \right ) \dif {\vec y}  
-\int_{B_\alpha} \sigma_s\frac{\sigma_s}{\sigma_*} \nabla_x G_s ( {\vec x}, {\vec y})\times \nabla \phi_0 \dif {\vec y} ,
 \nonumber \\
{\vec I}_{2,3}  & := \int_{B_\alpha}  \sigma_s \left (  \nabla_x G_s ( {\vec x}, {\vec y}) - \nabla_x G_s ( {\vec x}, {\vec z})   + {\vec D}^2_x G_s ({\vec x}, {\vec y})  ({\vec y}- {\vec z})
\right )
 \times \left ( {\vec F} ({\vec y} ) + \alpha {\vec w}_{0, fs} \left( \frac{y-z}{\alpha} \right ) \right ) \dif {\vec y},  \nonumber \\
{\vec I}_{2,4}  & := \int_{B_\alpha}  \sigma_s \left (   \nabla_x G_s ( {\vec x}, {\vec z})   - {\vec D}^2_x G_s ({\vec x}, {\vec y})  ({\vec y}- {\vec z})
\right )
 \times \left ( {\vec F} ({\vec y} ) + \alpha {\vec w}_{0, fs} \left( \frac{y-z}{\alpha} \right ) \right ) \dif {\vec y} .  \nonumber
 \end{align}

First considering ${\vec I}_{2,1} $ in a similar way to ${\vec I}_{1,1} $, using  (\ref{thm:wdiff2})  we estimate that
 \begin{align}
 | {\vec I}_{2,1}  | \le & C  \sigma_s \alpha^{3/2} \left  \| {\vec E}_\alpha - {\vec E}_0 - {\vec w}_{fs} -  \left ( \frac{\sigma_*-\sigma_s}{\sigma_*} \right ) \nabla \phi_0 \right \|_{L^2(B_\alpha)}  \nonumber \\
 \le & C   \sigma_s \alpha^6  ( \mu_{r,*} | 1-  \mu_{r,*}^{-1} | + \nu \mu_{r,*}   ) \| \nabla \times {\vec E}_0 \|_{W^{2,\infty}(B_\alpha)}
   \nonumber \\
    \le & C  \varepsilon  \frac{\alpha^6}{D^2} \mu_{r,s} ( \mu_{r,*} | 1 -  \mu_{r,*}^{-1} | + \nu \mu_{r,*}   ) \|  {\vec H}_0 \|_{W^{2,\infty}(B_\alpha)} 
  \nonumber  \\
 \le & C   \nu \frac{\alpha^6}{D^2}   \|  {\vec H}_0 \|_{W^{2,\infty}(B_\alpha)} 
  \nonumber 
 \end{align}
Secondly, considering ${\vec I}_{2,2} $ in a similar way to ${\vec I}_{2,2} $ , using  (\ref{eqn:e0mfmphiorg}),  (\ref{eqn:e0mfmphi})
 we estimate that
  \begin{align}
 | {\vec I}_{2,3}  | \le & C \alpha^{3/2} \left (  \sigma_s \left  \| {\vec E}_0 + \nabla \phi -{\vec F} ({\vec y} \right \|_{L^2(B_\alpha)} +  \sigma_s\frac{\sigma_s}{\sigma_*} \| \nabla \phi_0 \|_{L^2(B_\alpha)}
 \right )
   \nonumber \\
 \le & C \alpha^{3/2} \left (   \sigma_s \alpha^{9/2} \| \nabla \times {\vec E}_0 \|_{W^{2,\infty}(B_\alpha)}+    \sigma_s \frac{\sigma_s}{\sigma_*}  \alpha^{5/2} \omega \mu_0 \| \nabla \times {\vec H}_0\|_{W^{1,\infty}(B_\alpha)} \right )\nonumber \\
 \le & C \left ( \varepsilon   \frac{ \alpha^6}{D^2} \mu_{r,s} \|  {\vec H}_0 \|_{W^{2,\infty}(B_\alpha)} + \varepsilon \frac{  \alpha^4}{D^2}\frac{\sigma_s}{\sigma_*} \| \nabla \times {\vec H}_0\|_{W^{1,\infty}(B_\alpha)}  \right )  \nonumber  \\
 \le & C \left (  \nu\frac{ \alpha^6}{D^2}  \|  {\vec H}_0 \|_{W^{2,\infty}(B_\alpha)} +\nu \frac{  \alpha^4}{D^2} \frac{\sigma_s}{\sigma_*} \| \nabla \times {\vec H}_0\|_{W^{1,\infty}(B_\alpha)}  \right ) . \nonumber  
 \end{align}
  Thirdly, considering  ${\vec I}_{2,3} $ in a similar way to  ${\vec I}_{1,3} $ using (\ref{eqn:alpham1F}) and (\ref{eqn:expandw0fs})  we have
  \begin{align}
 | {\vec I}_{2,3}  | \le & C \sigma_s \alpha^{3/2} \alpha^2 \left \| {\vec F} ({\vec y} ) + \alpha {\vec w}_{0, fs} \left( \frac{y-z}{\alpha} \right ) \right \|_{L^2(B_\alpha)} \nonumber \\
 \le & C  \sigma_s \alpha^{7/2} \alpha^{5/2}  \omega \mu_0 \mu_{r,s}  \|{\vec H}_0 \|_{W^{2,\infty}(B_\alpha)} \nonumber \\
 \le & C  \varepsilon  \frac{\alpha^{6}}{D^2} \|{\vec H}_0 \|_{W^{2,\infty}(B_\alpha)}  \nonumber\\
 \le & C  \nu \frac{\alpha^{6}}{D^2}  \|{\vec H}_0 \|_{W^{2,\infty}(B_\alpha)} . \nonumber
 \end{align}
 Fourthly, considering   ${\vec I}_{2,4} $
 \begin{align}
| {\vec I}_{2,4} | & =\left |  \int_{B_\alpha}  \sigma_s \left (    {\vec D}^2_x G_s ({\vec x}, {\vec y})  ({\vec y}- {\vec z})
\right )
 \times \left ( {\vec F} ({\vec y} ) + \alpha {\vec w}_{0, fs} \left( \frac{y-z}{\alpha} \right ) \right ) \dif {\vec y} \right |  \nonumber\\
 & \le C \mu_{r,s} \sigma_s \omega \mu_0 \alpha^5 \| {\vec H}_0 \|_{W^{2,\infty } (B_\alpha) }\nonumber \\
 & \le C  \nu \frac{\alpha^{5}}{D^2} \| {\vec H}_0 \|_{W^{2,\infty } (B_\alpha) }, \nonumber 
 \end{align}
where the term involving $\nabla_x G({\vec x},{\vec z})$ cancels by  following similar steps to~\cite{Ammari2014}. Thus ${\vec I}_2$ can be absorbed into ${\vec R}({\vec x})$.

\noindent {\bf Consider term ${\vec I}_3$}\\
Noting that ${\vec H}_\alpha = \frac{1}{\im \omega \mu_0 \mu_{r,*}} \nabla \times {\vec E}_\alpha$ and ${\vec H}_0 = \frac{1}{\im \omega \mu_0 \mu_{r,s}}  \nabla \times {\vec E}_0$ in $B_\alpha$, we introduce
\begin{align}
  {\vec H}_{0,fs}^* ({\vec \xi}) := \frac{1}{\im \omega \mu_0 \mu_{r,s} } \nabla_\xi \times {\vec w}_{0,fs}^* . \nonumber
\end{align} 
We write ${\vec I}_3$  as
\begin{align}
{\vec I}_3  = -   \left ( {\vec I}_{3,1} +  {\vec I}_{3,2} + {\vec I}_{3,3} + {\vec I}_{3,4} \right ) , \nonumber 
\end{align}
where
\begin{align}
{\vec I}_{3,1}:=& (\mu_{r,s}-\mu_{r,*} )  \int_{B_\alpha} {\vec D}_x^2 G_s ( {\vec x}, {\vec y}) \left ( {\vec H}_\alpha ({\vec y})  - \frac{\mu_{r,s}}{\mu_{r,*} } {\vec H}_{0}({\vec y} )-  \frac{\mu_{r,s}}{\mu_{r,*} } {\vec H}_{0,fs}^* \left (\frac{{\vec y}- {\vec z}}{\alpha} \right ) \right ) \dif {\vec y}  \nonumber , \\
{\vec I}_{3,2}:=&  \left (\frac{\mu_{r,s}}{ \mu_{r,*} } - 1 \right ) \mu_{r,s}  \int_{B_\alpha} (  {\vec D}_x^2 G_s ( {\vec x}, {\vec y}) - {\vec D}_x^2 G_s ( {\vec x}, {\vec z}) )
\left (   {\vec H}_0({\vec y}) + {\vec H}_{0,fs}^* \left (\frac{{\vec y}- {\vec z}}{\alpha} \right ) \right )  \dif {\vec y} , \nonumber\\
{\vec I}_{3,3}:=&  \left (\frac{\mu_{r,s}}{ \mu_{r,*} } - 1 \right ) \mu_{r,s}   \int_{B_\alpha}   {\vec D}_x^2 G_s ( {\vec x}, {\vec z}) ({\vec H}_0({\vec y}) - {\vec H}_0({\vec z}) ) \dif {\vec y} ,  \nonumber\\
{\vec I}_{3,4}:=& \left (\frac{\mu_{r,s}}{ \mu_{r,*} } - 1 \right )  \mu_{r,s} \int_{B_\alpha}   {\vec D}_x^2 G_s ( {\vec x}, {\vec z}) \left ({\vec H}_0({\vec z}) + {\vec H}_{0,fs}^*\left  (\frac{{\vec y} - {\vec z}}{\alpha} \right)\right ) \dif {\vec y} . \nonumber
\end{align}
Then, for ${\vec I}_{3,1}$, using  (\ref{lemma_engy1}) we have
\begin{align}
| {\vec I}_{3,1} | \le & C  \alpha^{3/2}|\mu_{r,s}-\mu_{r,*}| \left  \|  {\vec H}_\alpha ({\vec y})  - \frac{\mu_{r,s}}{\mu_{r,*} } {\vec H}_0({\vec y} )-  \frac{\mu_{r,s}}{\mu_{r,*} } {\vec H}^*({\vec y}) \right \|_{L^2(B_\alpha)}\nonumber\\
\le & C \alpha^{3/2}   \frac{1}{\omega \mu_0 \mu_{r,*}}  \left \| \nabla \times {\vec E}_\alpha - \nabla \times {\vec E}_0 - \nabla \times {\vec w} \  \right \|_{L^2(B_\alpha)}\nonumber\\
\le & C \alpha^{5} \mu_{r,s}   \left (  | 1 - \mu_{r,*}^{-1} |  + \nu  \right )  \|{\vec H}_0 \|_{W^{2,\infty}(B_\alpha)}   \nonumber\\
\le & C \alpha^{5} \nu \|{\vec H}_0 \|_{W^{2,\infty}(B_\alpha)} ,  \nonumber 
\end{align}

For ${\vec I}_{3,2}$
\begin{align}
|{\vec I}_{3,2}| \le &
C \alpha^{3/2}  \left | \frac{\mu_{r,s}}{\mu_{r,*}}-1 \right | \mu_{r,s} \alpha    \alpha^{3/2} \|  {\vec H}_0 \|_{W^{2,\infty}(B_\alpha)}  \nonumber \\
\le & C\alpha^{4} \|  {\vec H}_0 \|_{W^{2,\infty}(B_\alpha)} \nonumber ,
\end{align}
and for ${\vec I}_{3,3}$ we get
\begin{align}
|{\vec I}_{3,3}| \le &C  \alpha^{3/2} \left | \frac{\mu_{r,s}}{\mu_{r,*}}-1 \right | \mu_{r,s} \alpha \alpha^{3/2}  \|  {\vec H}_0 \|_{W^{2,\infty}(B_\alpha)}  \nonumber \\
\le & C\alpha^{4} \|  {\vec H}_0 \|_{W^{2,\infty}(B_\alpha)} \nonumber .
\end{align}
The fourth term ${\vec I}_{3,4}$ can be dealt with in a similar way to~\cite{Ammari2014}, where, in our case,
\begin{align}
{\vec H}_{0,fs}^* ({\vec \xi}) = \frac{1}{\im \omega \mu_0 \mu_{r,s} } \nabla_\xi \times {\vec w}_{0,fs}^* = 
\sum_{i=1}^3 \frac{1}{2} ({\vec H}({\vec z}))_i \nabla_\xi \times {\vec \theta}_i +\sum_{i,j=1}^3 \frac{\alpha}{3} ( {\vec D}{\vec H}({\vec z}))_{ij} \nabla_\xi \times {\vec \psi}_{ij}({\vec \xi})  ,
\end{align} 
follows from (\ref{eqn:expandw0fs})
and leads to
\begin{align}
{\vec I}_{3,4} = \left (\frac{\mu_{r,s}}{ \mu_{r,*} } - 1 \right )  \alpha^3 \mu_{r,s}  \sum_{i=1}^3 \int_B {\vec D}^2 G_s(  {\vec x}, {\vec z}) \left ( {\vec e}_i + \frac{1}{2} \nabla \times {\vec \theta}_i  \right ) \dif {\vec \xi}  ({\vec H}_0({\vec z}))_i  + {\vec R}({\vec x}) . \nonumber
\end{align}
 {Furthermore, since $\left | \left ( \frac{\mu_{r,s}}{\mu_{r,*}}-1 \right )  - \left ( \frac{1}{\mu_{r,*}}-1 \right )\right | = \left |
   \frac{\mu_{r,s}}{\mu_{r,*}} - \frac{1}{\mu_{r,s} }\right | \le C $ and  $|\mu_{r,s} - 1 | \le \frac{\alpha}{D}$,
\begin{align}
{\vec I}_{3,4} = \left (\frac{1}{ \mu_{r,*} } - 1 \right )  \alpha^3   \sum_{i=1}^3 \int_B {\vec D}^2 G_s(  {\vec x}, {\vec z}) \left ( {\vec e}_i + \frac{1}{2} \nabla \times {\vec \theta}_i  \right ) \dif {\vec \xi}  ({\vec H}_0({\vec z}))_i  + {\vec R}({\vec x}) . \nonumber
\end{align}}
We note that $| {\vec I}_{3,1} + {\vec I}_{3,2} + {\vec I}_{3,3}  | \le C \alpha^4 \| {\vec H}_0 \|_{W^{2,\infty}(B_\alpha)}$ and so can be grouped with ${\vec R}({\vec x})$. Summing ${\vec I}_1$, ${\vec I}_2$ and ${\vec I}_3$ gives the desired result.
\end{proof}

\begin{remark} \label{remak:laplacegreen}
For sufficiently small $k_0$ we can replace $G_s({\vec x},{\vec z})$ by the free space Laplace  Green's function $G_0({\vec x},{\vec z})$ leading to
\begin{align}
({\vec H}_\alpha - {\vec H}_0)({\vec x}) = & - \im \nu \alpha^3  \sum_{i=1}^3\int_{B} {\vec D}_x^2 G_0 ({\vec x},{\vec z}){\vec \xi} \times ( {\vec e}_i  \times {\vec \xi} + {\vec \theta}_i ) \dif {\vec \xi} ({\vec H}_0({\vec z}))_i 
\nonumber \\
& + \left (1- \frac{1}{ \mu_{r,*} }  \right )  \alpha^3 \sum_{i=1}^3 \int_B {\vec D}_x^2 G_0 (  {\vec x}, {\vec z}) \left ( {\vec e}_i + \frac{1}{2} \nabla \times {\vec \theta}_i  \right ) \dif {\vec \xi} ({\vec H}_0({\vec z}))_i  + {\vec R}({\vec x}) .
\end{align}
\end{remark}

\section{Tensor representation} \label{sect:tensor}

Applying  results from~\cite{LedgerLionheart2015}, ~\cite{LedgerLionheart2020spect} and~\cite{Ammari2015} allows us to obtain the alternative form of Theorem~\ref{eqn:mainresult}.
\begin{theorem}
 {Let $\nu$ be order one and let $\alpha$ be small with $\epsilon \le  \nu$.}
For ${\vec x} \in \Omega_{fs}$ {and objects away from $\Gamma_s$}
\begin{align}
({\vec H}_\alpha - {\vec H}_0 ) ({\vec x})_i = ({\vec D}_x^2 G_s ({\vec x},{\vec z}))_{ij} ({\mathcal M})_{jk} ({\vec H}_0({\vec z})_k + ({\vec R}({\vec x}))_i ,  \label{eqn:tenrep}
\end{align}
where $|{\vec R}({\vec x})| \le C\alpha^4 \| {\vec H}_0 ({\vec z})\|_{W^{2,\infty} (B_\alpha)}$ uniformly in ${\vec x}$ in a compact set away from $B_\alpha$ and $\mathcal{M}$ is a complex symmetric rank 2 magnetic polarizability tensor, whose coefficients $({\mathcal M})_{ij}$ are independent of the object's position and $({\mathcal M})_{ij}:=(\tilde{\mathcal{R}})_{ij}+\im(\mathcal{I})_{ij}=(\mathcal{N}^0)_{ij}+(\mathcal{R})_{ij}+\im(  \mathcal{I})_{ij}$ with
\begin{subequations}
\label{eqn:NRI}
\begin{align}
(\mathcal{N}^0( \alpha B,\mu_{r,*}) )_{ij}&:=\alpha^3\delta_{ij}\int_{B}(1-\tilde{\mu}_r^{-1})\dif \bm{\xi}+\frac{\alpha^3}{4}\int_{B\cup B^c}\tilde{\mu}_r^{-1}\nabla\times\tilde{\bm{\theta}}_i^{(0)}\cdot\nabla\times\tilde{\bm{\theta}}_j^{(0)}\dif \bm{\xi},\\
(\mathcal{R}(\alpha B, \omega,\sigma_*,\mu_{r,*}))_{ij}&:=-\frac{\alpha^3}{4}\int_{B\cup B^c}\tilde{\mu}_r^{-1}\nabla\times\overline{\bm{\theta}_i^{(1)}}\cdot\nabla\times{\bm{\theta}_j^{(1)}}\dif \bm{\xi}, \label{eqn:Rtensor}\\
(\mathcal{I}(\alpha B, \omega,\sigma_*,\mu_{r,*}))_{ij}&:=\frac{\alpha^3}{4}\int_B\nu\Big(\overline{\bm{\theta}_i^{(1)}+\tilde{\bm{\theta}}_i^{(0)}+\bm{e}_i\times\bm{\xi}}\Big)\cdot\Big({\bm{\theta}_j^{(1)}+\tilde{\bm{\theta}}_j^{(0)}+\bm{e}_j\times\bm{\xi}}\Big)\dif \bm{\xi}, \label{eqn:Itensor}
\end{align}
\end{subequations}
are each the coefficients of real symmetric rank-2 tensors and $\delta_{ij}$ is the Kronecker delta. In the above,  the overbar denotes the complex conjugate,  $\tilde{\mu}_r ({\vec \xi}) =\mu_{r,*}$ in $B$ and $\tilde{\mu}_r({\vec \xi}) =1$ otherwise, where ${\vec \xi}$ is measured from the origin, which lies inside $B$. Additionally, ${\vec \theta}_i = {\vec \theta}_i^{(0)} + {\vec \theta}_i^{(1)} - {\vec e}_i \times {\vec \xi}= \tilde{\vec \theta}_i^{(0)} + {\vec \theta}_i^{(1)} $ is a splitting of the solution to (\ref{eqn:thetatrans}).
\end{theorem}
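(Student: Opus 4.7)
The plan is to start from the leading-order expansion in the preceding theorem and rewrite the two groups of integrals in component form, then identify the components of a complex symmetric rank-$2$ tensor acting between $\vec{D}_x^2 G_s(\vec{x},\vec{z})$ and $\vec{H}_0(\vec{z})$. Writing $(\vec{D}_x^2 G_s \vec{\xi}\times(\vec{e}_i\times\vec{\xi}+\vec{\theta}_i))_\ell=(\vec{D}_x^2 G_s)_{\ell j}(\vec{\xi}\times(\vec{e}_i\times\vec{\xi}+\vec{\theta}_i))_j$ and doing the analogous manipulation for the term involving $\vec{e}_i+\tfrac12\nabla\times\vec{\theta}_i$, one sees that the leading order can be cast in the form $(\vec{D}_x^2 G_s(\vec{x},\vec{z}))_{\ell j}(\mathcal{M})_{ji}(\vec{H}_0(\vec{z}))_i$ for some coefficients $(\mathcal{M})_{ji}$ that are $i$-indexed integrals over $B$ of quantities built from $\vec{\theta}_i$ and its curl.

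Next, I would use the splitting $\vec{\theta}_i=\tilde{\vec\theta}_i^{(0)}+\vec{\theta}_i^{(1)}=\vec{\theta}_i^{(0)}+\vec{\theta}_i^{(1)}-\vec{e}_i\times\vec{\xi}$, where $\vec{\theta}_i^{(0)}$ solves the $\nu=0$ magnetostatic transmission problem obtained from (\ref{eqn:thetatrans}) and $\vec{\theta}_i^{(1)}$ is the eddy-current correction. This splitting is exactly the one used in~\cite{LedgerLionheart2015,LedgerLionheart2020spect}, and because the transmission problem (\ref{eqn:thetatrans}) for $\vec{\theta}_i$ is independent of $\sigma_s$ and $\mu_{r,s}$, the purely $\vec\theta$-dependent coefficients computed below will inherit this independence.

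I would then apply the identities from~\cite{LedgerLionheart2015,Ammari2015}: integration by parts over $B$ with the help of the transmission conditions on $\Gamma$ and the decay at infinity turns the volume integrals $\int_B\vec{\xi}\times(\vec{e}_i\times\vec{\xi}+\vec{\theta}_i)\,\dif\vec\xi$ and $\int_B(\vec{e}_i+\tfrac12\nabla\times\vec\theta_i)\,\dif\vec\xi$ into the symmetric bilinear forms shown in (\ref{eqn:NRI}). Specifically, the static piece yields the $\mathcal{N}^0$ contribution through an identity of the form $\delta_{ij}\int_B(1-\tilde\mu_r^{-1})\,\dif\vec\xi+\tfrac14\int_{B\cup B^c}\tilde\mu_r^{-1}\nabla\times\tilde{\vec\theta}_i^{(0)}\cdot\nabla\times\tilde{\vec\theta}_j^{(0)}\,\dif\vec\xi$; the real part of the eddy-current correction yields $\mathcal{R}$; and the imaginary part yields $\mathcal{I}$, via the identity which rewrites $-\im\nu\int_B(\vec e_j\times\vec\xi+\vec\theta_j)\cdot\vec\xi\times\vec e_i\,\dif\vec\xi$ in the symmetric $L^2(B)$-form involving $\vec\theta_i^{(1)}+\tilde{\vec\theta}_i^{(0)}+\vec e_i\times\vec\xi$. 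Complex symmetry $(\mathcal{M})_{ij}=(\mathcal{M})_{ji}$ follows automatically from the bilinear (not sesquilinear) character of these expressions, once the manipulation has been performed.

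The main technical obstacle is keeping track of how the factors of $\mu_{r,s}$ that appear multiplying the integrals of $\vec F$ and $\vec w_{0,fs}$ in the proof of the previous theorem propagate through: any residual $\mu_{r,s}-1$ prefactor must either cancel or, using $|\mu_{r,s}-1|\le\alpha/D$, be absorbed into the remainder $\vec R(\vec x)$ at order $\alpha^4$, exactly as was done in the treatment of $\vec I_{1,4}$ and $\vec I_{3,4}$ in the previous proof. Once this bookkeeping is done, the leading-order coefficients coincide formally with the free-space MPT formulas from~\cite{LedgerLionheart2015,LedgerLionheart2020spect}, and therefore $(\mathcal{M})_{ij}$ is independent of $\vec z$ and of the soil parameters, completing the representation (\ref{eqn:tenrep}).
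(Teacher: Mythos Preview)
Your proposal is correct and takes essentially the same approach as the paper: both start from the leading-order expansion (\ref{eqn:mainresult}) of the previous theorem and then invoke Theorem~3.2 of~\cite{LedgerLionheart2015} to recast it in tensor form, Theorem~5.1 of~\cite{LedgerLionheart2020spect} for the $\mathcal{N}^0+\mathcal{R}+\im\mathcal{I}$ splitting, and Proposition~5.1 of~\cite{Ammari2015} for position independence. Your discussion of the $\mu_{r,s}$ bookkeeping is slightly redundant, since those factors were already absorbed into $\vec R(\vec x)$ in the proof of (\ref{eqn:mainresult}) via the treatment of $\vec I_{1,4}$ and $\vec I_{3,4}$, so the starting expression is already free of $\mu_{r,s}$; otherwise your sketch simply unpacks what the paper leaves to citation.
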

\begin{proof}
The reduction of (\ref{eqn:mainresult}) to an asymptotic formula in terms of a  complex symmetric rank 2 magnetic polarizability tensor follows the same steps as the proof of Theorem 3.2  in~\cite{LedgerLionheart2015}. Theorem 5.1 in~\cite{LedgerLionheart2020spect} establishes the  form $({\mathcal M})_{ij}:=(\tilde{\mathcal{R}})_{ij}+\im(\mathcal{I})_{ij}=(\mathcal{N}^0)_{ij}+(\mathcal{R})_{ij}+\im(  \mathcal{I})_{ij}$  and explicit expressions for the tensor coefficients in (\ref{eqn:NRI}). The independence of object position of $({\mathcal M})_{ij}$ follows from Proposition 5.1 in~\cite{Ammari2015}.
\end{proof}

\begin{remark}
Considering a small measurement coil with support in $\Omega_{fs}$, then the induced voltage in the coil has the form
\begin{align}
\Delta V = \im \omega \mu_0 \int_S {\vec n} \cdot ({\vec H}_\alpha - {\vec H}_0 ) ({\vec x}) \dif {\vec x},
\nonumber
\end{align}
where ${\vec n}$ is perpendicular to the open surface $S$ defining the plane of the coil. Substituting (\ref{eqn:tenrep}) leads to
\begin{align}
\Delta V_s = ({\vec H}_0^{ms}({\vec z}))_i ({\mathcal M})_{ij}  ({\vec H}_0({\vec z}))_j + R, \qquad  {\vec H}_0^{ms}({\vec z})_i: =  \im \omega \mu_0 \int_S ({\vec D}_x^2 G_s ({\vec x},{\vec z}))_{ij} ({\vec n})_j \dif {\vec x},
\label{eqn:vsincludingsoilgs}
\end{align}
where ${\vec H}_0^{ms}({\vec z})$ is the magnetic field that would be produced by the measurement coil evaluated at ${\vec z}$ in absence of the object, but in the presence of the soil, if the measurement coil acts as an exciter and $ |R| \le C \alpha^4 \|{\vec H}_0 \|_{W^{2,\infty}(B_\alpha)}$. Furthermore, for sufficiently small $k_0$, following Remark~\ref{remak:laplacegreen}, we also have
\begin{align}
\Delta V_{s0} = ({\vec H}_0^{ms}({\vec z}))_i ({\mathcal M})_{ij}  ({\vec H}_0({\vec z}))_j + R, \qquad  {\vec H}_0^{ms}({\vec z})_i: =  \im \omega \mu_0 \int_S ({\vec D}_x^2 G ({\vec x},{\vec z}))_{ij} ({\vec n})_j \dif {\vec x}, 
\end{align}
with ${\vec H}_0^{ms}({\vec z})$  being as before, but in the absence of the soil, with differences being absorbed in to $R$.
\end{remark}

\section{Numerical simulations} \label{sect:numerical}

\subsection{Buried spherical object} \label{results:sphere}

We fix $\Gamma_s$ to be the plane $x_3=0$ so that  $x_3 >0$ is $\Omega_{fs}$ and  $x_3 <0 $ is $\Omega_s$. Unless otherwise mentioned, for this section $B_\alpha \subset \Omega_s $ is a sphere of radius $\alpha=0.1$ m  with centre at ${\vec z}=(0,0,-0.4)$ m and material properties $\sigma_* = 1 \times 10^6$ S/m,  $\mu_{r,*}=1$. The background field ${\vec H}_0$ is generated by a  cylindrical current source with ${\vec J}^s = (10 /  A_{\text{coil}}) {\vec e}_\phi$ $\text{A/m}^2$, with inner radius $r_\text{{inner}}=0.12$ m, outer radius $r_{\text{outer}}=0.15$ m,  height $h=0.1$ m so that $A_{\text{coil}} =h (r_{\text{outer}}-r_{\text{inner}})= 3\times10^{-3}$ m$^2$ and located $h_{off}=0.2$ m above $\Gamma_s$ in $\Omega_{fs}$. An identically sized cylinder is used to measure the induced voltage. This situation is illustrated in Figure~\ref{fig:sim1mesh}.

\begin{figure}[h]
\begin{center}
$\begin{array}{cc}
\includegraphics[width=3in]{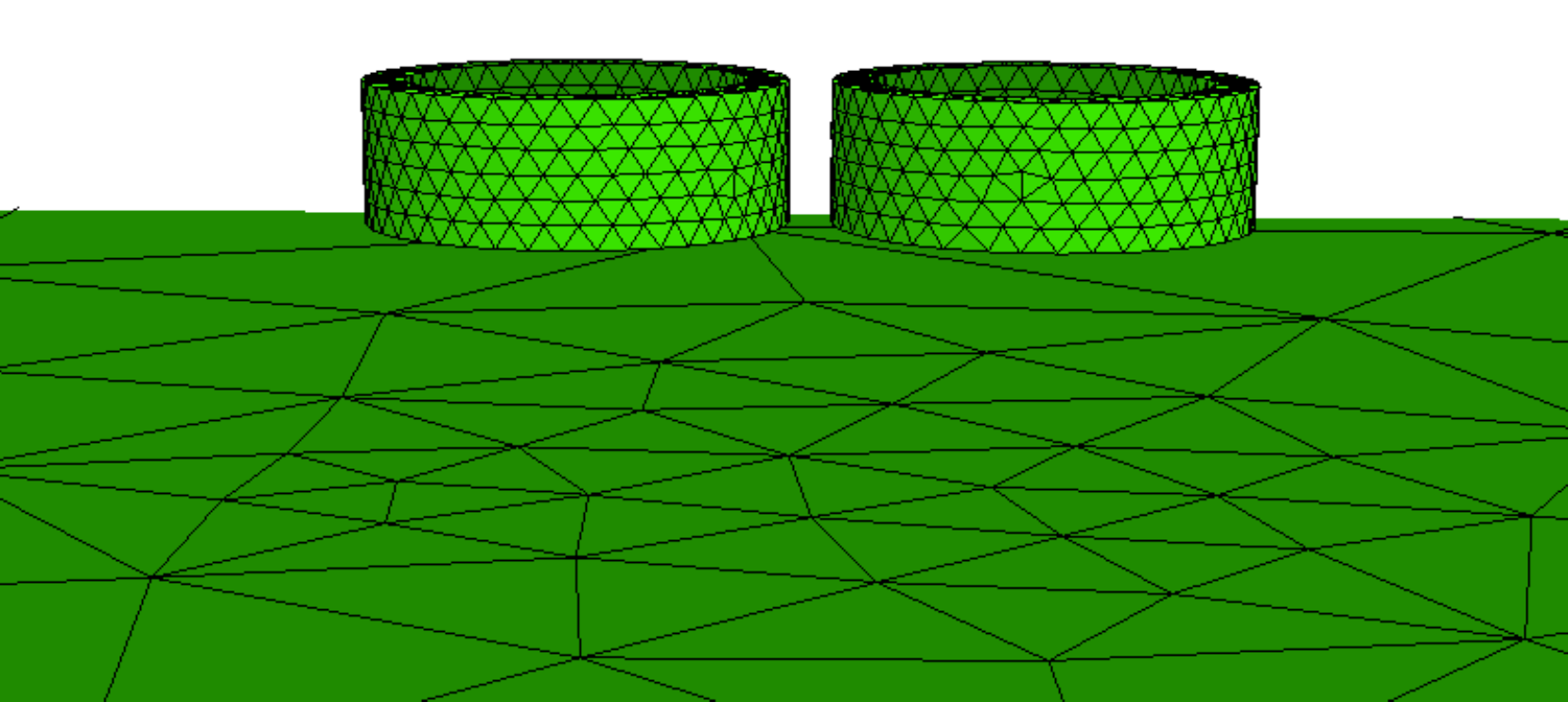} &
\includegraphics[width=3in]{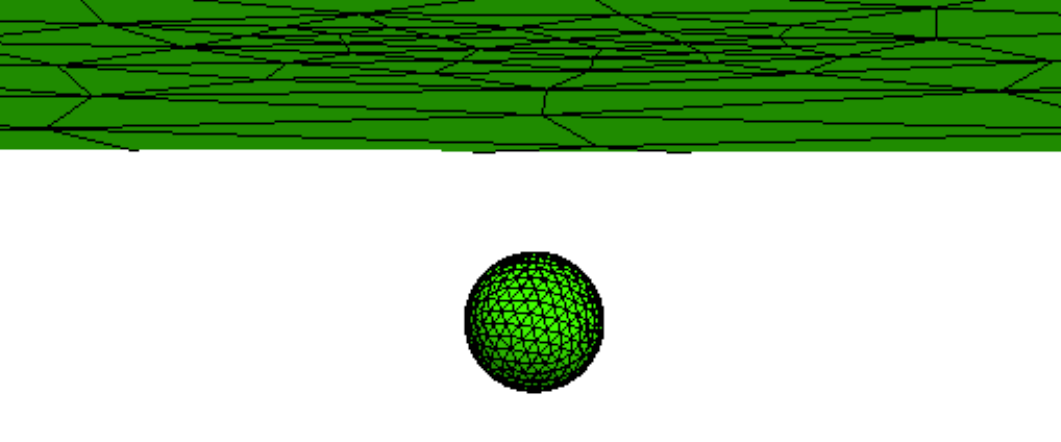} \\
(a) & (b)
\end{array}$
\end{center}
\caption{Buried spherical object: Discretised by 22,834 unstructured tetrahedra and prismatic elements showing  the surface triangulations on $(a)$ the half plane and excitation and measurement coils and $(b)$ the half plane and spherical object (prismatic layers are used to model the thin skin depths inside this object). } \label{fig:sim1mesh}
\end{figure}

The unbounded $\Omega_{fs}$ and $\Omega_s$ are truncated a finite distance from the sphere to form a computational domain  $\Omega$ in the form of a  box with side lengths of $20$ m. The computational domain is discretised by  22, 834 unstructured tetrahedra and prismatic elements using  the Netgen mesh generator~\cite{netgendet}, which also forms also  a discretisation of the coils and  object. To compute numerical solutions, we apply a vector potential formulation of the eddy current problem, where a vector potential ${\vec A}_\alpha$ is introduced such that ${\vec H}_\alpha = \mu^{-1} \nabla \times {\vec A}_\alpha$ and use the boundary condition ${\vec n} \times {\vec A}_\alpha= {\vec 0}$ as an approximation to static decay on $\partial \Omega$. The scheme  is regularised to circumvent the Coulomb gauge in the non-conducting by the introduction of a small regularisation parameter $\varepsilon_{\text{reg}}= 1 \times 10^{-6} $ and using a high-order ${\vec H}(\hbox{curl})$ conforming finite elements, with those basis functions corresponding to gradient fields skipped in this region~\cite{ledgerzaglmayr2010}. Specifically, the basis functions proposed by Sch\"oberl and Zaglmayr~\cite{SchoberlZaglmayr2005} are employed and the NG-Solve finite element library employed for the computation. On the generated mesh, the element order is uniformly increased until solution convergence is achieved, which, for the frequencies considered occurs with $p=3$ elements. 

In absence of the object (by fixing its material parameters to be either those of free space or soil), the resulting solution for induced voltage in the measurement coil as a function of $\omega$ due to the presence of the soil is
\begin{align}
\Delta V_0 = \im \omega \mu_0 \int_{S} ( {\vec H}_{0}- {\vec H}_{0,fs})({\vec x})_i ({\vec n})_i   \dif {\vec x},
\end{align}
where ${\vec H}_{0}$ is the background magnetic field in the presence of the soil and  ${\vec H}_{0,fs}$ is the background magnetic field assuming the soil is the same as free space. Figure~\ref{fig:voltsoilonlyvarysigma} shows the results obtained for the situation where $\mu_{r,s}=1$ and $\sigma_s=0.01,0.1,1$ S/m in turn, where each line indicates a different soil conductivity. This figure illustrates the increasing influence  of the soil on the background field as the frequency increases, and also as the conductivity of the soil increases, illustrating the importance of considering the soil's conductivity in the model.
  \begin{figure}
 \begin{center}
 $\begin{array}{cc}
 \includegraphics[width=3in]{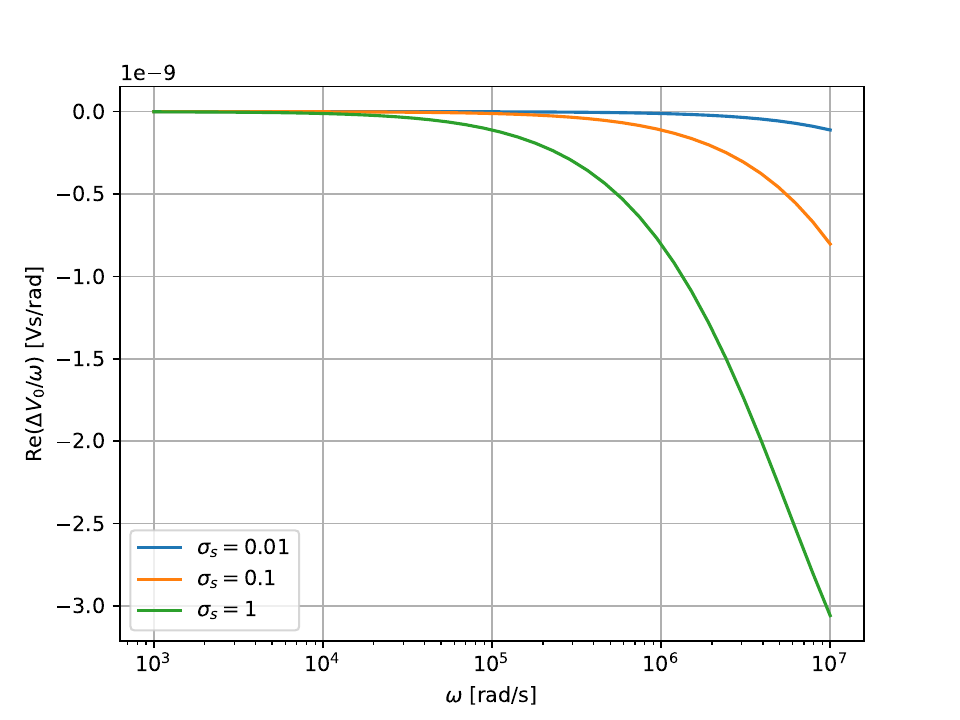} &
\includegraphics[width=3in]{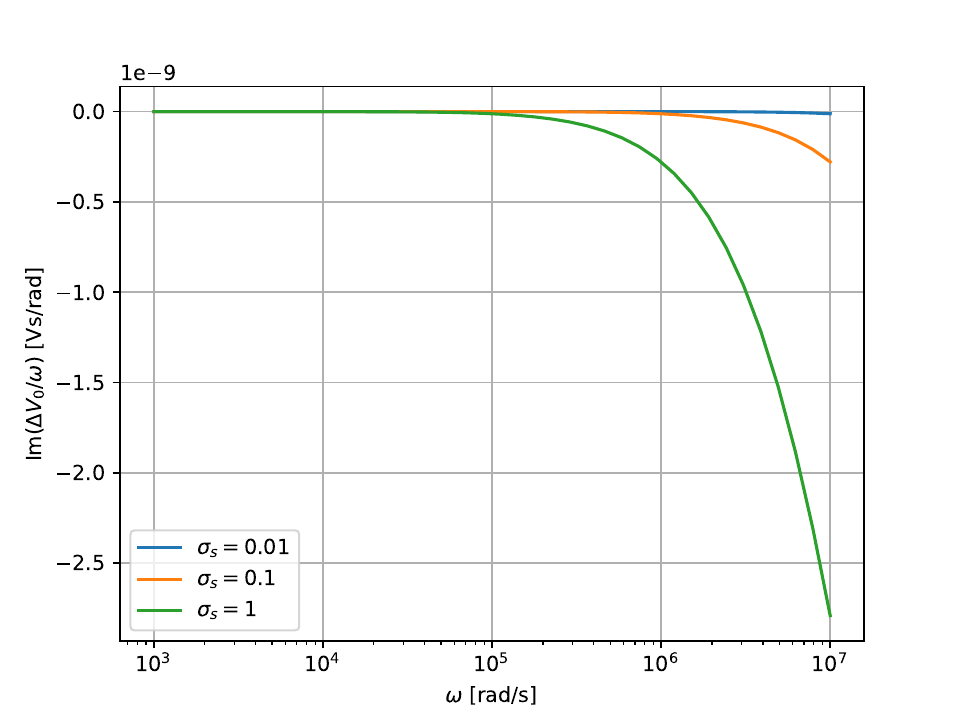} \\
(a) & (b) 
 \end{array}$
 \end{center}
 \caption{ {Influence of the soil's conductivity $\sigma_s$ on the induced voltage $\Delta V_0$ in the measurement coil as a function of $\omega$. Showing $(a)$ $\text{Re}( \Delta V_0 /\omega)$ and $(b)$ $\text{Im}( \Delta V_0 /\omega)$.}} \label{fig:voltsoilonlyvarysigma}
 \end{figure}
  Next, we consider the influence of the soil's magnetic permeability. We consider the extreme case of a soil with a high salinity, so that $\sigma_s=1.6$ S/m, and further consider  $\mu_{r,s}=1.0006,1.021,1.076$, the former representing typical soil conditions and latter two granite and iron rich soils, respectively. Figure~\ref{fig:voltsoilonlyvarymur} shows the results of this investigation where, compared to varying the soil's conductivity, for the presented values of $\mu_{r,s}$, their influence on the voltage perturbation $V_0$ is much smaller. Of course, if larger values of $\mu_{r,s}$ were considered, a stronger influence would be observed, but such values are not typically found in common soil types.
 
   \begin{figure}
 \begin{center}
 $\begin{array}{cc}
 \includegraphics[width=3in]{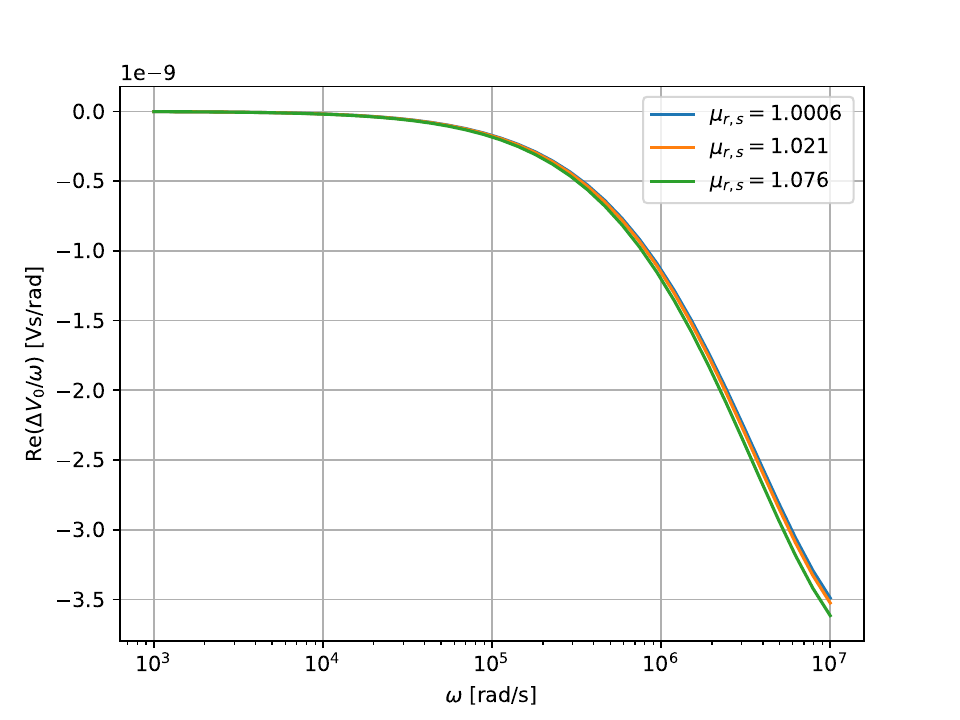} &
\includegraphics[width=3in]{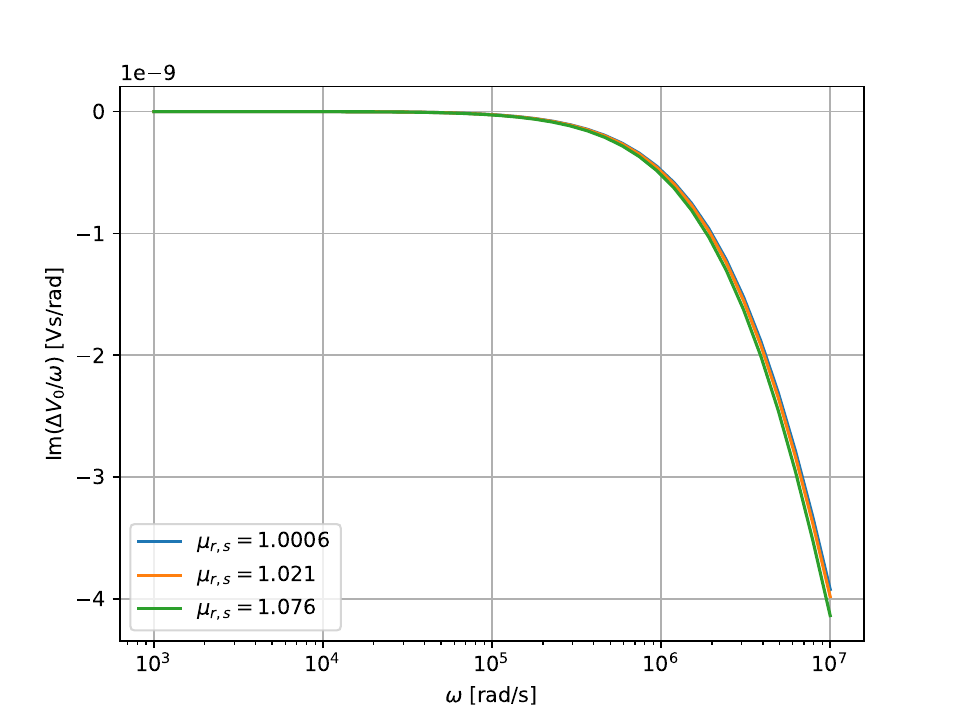} \\
(a) & (b) 
 \end{array}$
 \end{center}
 \caption{ {Influence of the soil's relative magnetic permeability $\mu_{r,s}$ on the induced voltage $\Delta V_0$ in the measurement coil as a function of $\omega$. Showing $(a)$ $\text{Re}( \Delta V_0 /\omega)$ and $(b)$ $\text{Im}( \Delta V_0 /\omega)$.}} \label{fig:voltsoilonlyvarymur}
 \end{figure}

We now turn attention to the influence of the soil on the induced voltage when the object is present. Considering soil with $\sigma_s =1.6$ S/m,  $\mu_{r,s}=1.0006$, we compare
\begin{align}
\Delta V_s = \im \omega \mu_0 \int_{S}  ( {\vec H}_{\alpha}- {\vec H}_{0,s})({\vec x})_i ({\vec n})_i   \dif {\vec x},  \qquad
\Delta V_{fs} = \im \omega \mu_0 \int_{S} ( {\vec H}_{\alpha}- {\vec H}_{0,fs})({\vec x})_i ({\vec n} )_i \dif {\vec x} ,
\end{align}
as a function of $\omega$ where $\Delta V_s$ is induced voltage due to the presence of the object compared to that of the soil and  $\Delta V_{fs} $ is the induced voltage due to the presence of the object compared to that of free space. Results for a range of different approaches are shown in Figure~\ref{fig:voltcomparemethsphere}. Firstly, we observe the significant difference between $\Delta V_{fs}$ Asym. (obtained by using the asymptotic expansion~\cite{LedgerLionheart2015,Ammari2014}, ${\vec H}_{0,fs}({\vec z})$ and the exact MPT for a sphere placed in free space~\cite{Wait1951}) and $\Delta V_s$ obtained by the complete finite element simulation, with significant differences for $\omega \ge 2 \times 10^5$ rad/s highlighting the importance of including the soil in the mathematical model. The curve $\Delta V_s$ Asym. is obtained by the new asymptotic expansion (\ref{eqn:tenrep}) 
and using
(\ref{eqn:vsincludingsoilgs}), and provides a significant improvement over  $\Delta V_{fs}$. A good approximation is obtained by $\Delta V_s$ Asym Aprox. (which uses a dipole model of both coils). As stated in Remark~\ref{remak:laplacegreen}, for sufficiently small $\omega$, we expect the Laplace Green's function $G_0$ to provide a reasonable approximation to $G_s$ and this is highlighted by the curves $\Delta V_{s0}$ Asym. (which uses $G_0$) and
 $\Delta V_{s0}$ Asym. Aprox. (which uses a dipole model of the coil and $G_0$). The comparison has been repeated for the sphere located at ${\vec z}= (0,0,-0.2),(0,0,-0.3), (0,0,-0.4)$ and $(0,0,-0.5)$ m and the agreement is similar to that shown in Figure~\ref{fig:voltcomparemethsphere}.

   \begin{figure}
 \begin{center}
 $\begin{array}{cc}
 \includegraphics[width=3in]{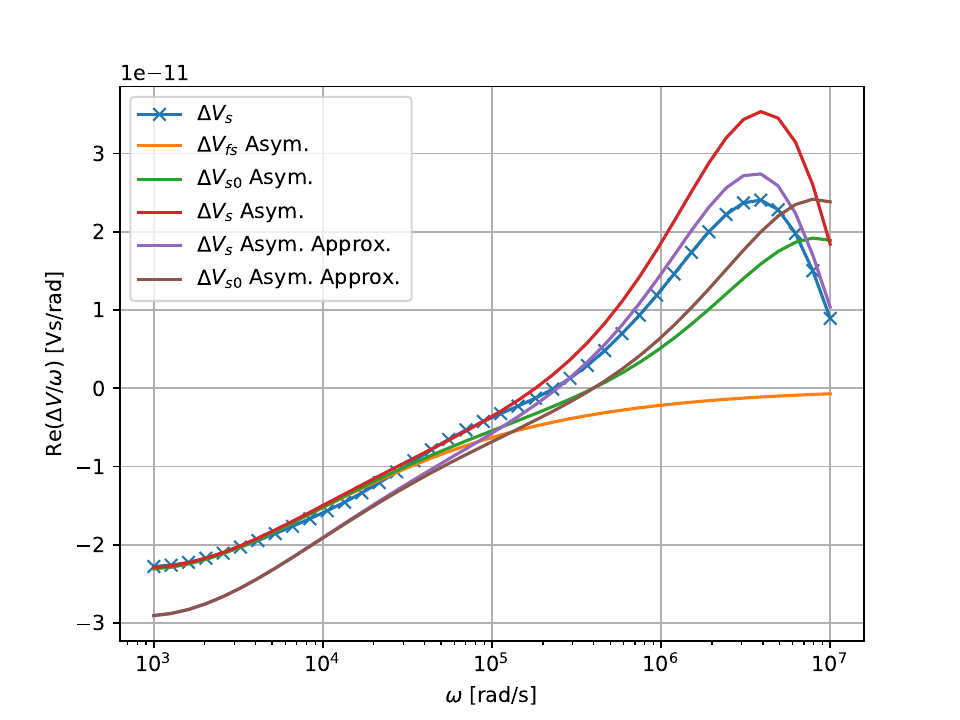} &
 \includegraphics[width=3in]{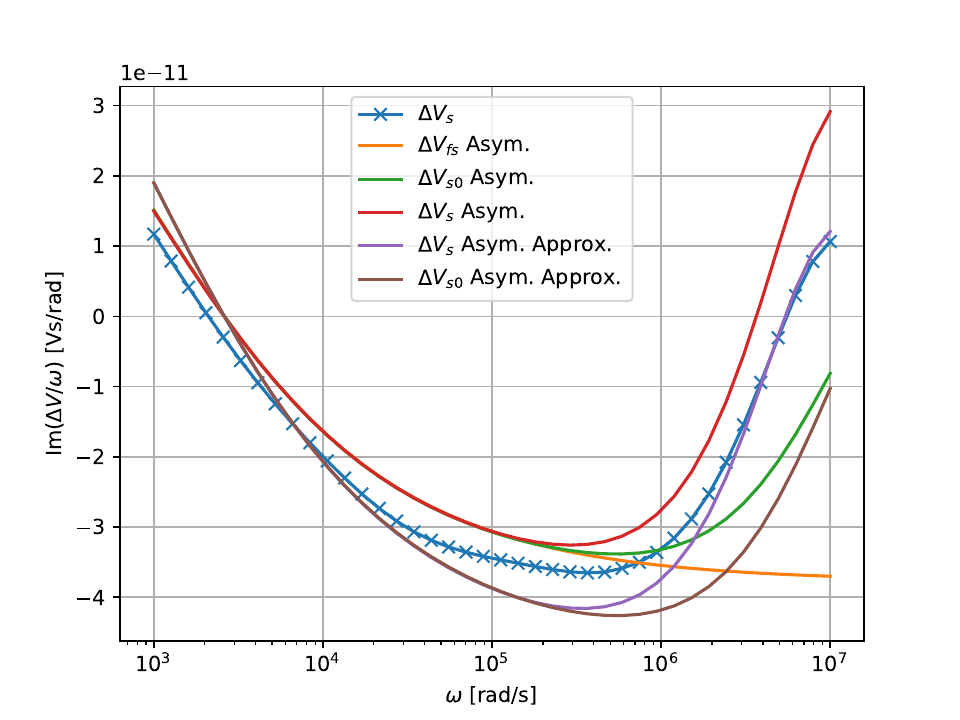}  \\
(a) & (b) 
 \end{array}$
 \end{center}
 \caption{ {Conducting buried sphere: Comparison of induced voltages $\Delta V_s$ and  $\Delta V_{fs}$ and  $\Delta V_{s0}$ in the measurement coil as a function of $\omega$. Showing $(a)$ $\text{Re}( \Delta V /\omega)$ and $(b)$ $\text{Im}( \Delta V /\omega)$.}} \label{fig:voltcomparemethsphere}
 \end{figure}
 
 For spheres located at ${\vec z}= (0,0,-0.2),(0,0,-0.3), (0,0,-0.4)$ and $(0,0,-0.5)$ m, in turn, the results obtained using $\Delta V_s$ Asym Aprox. as a function of $\omega$ are included in Figure~\ref{fig:voltcomparelocsphere}. As expected, objects buried at greater depths  produce weaker signals. For each object location a different mesh was generated, but each with approximately the same number of elements, and, as previously, using order $p=3$ elements ensured mesh convergence.
 \begin{figure}
 \begin{center}
 $\begin{array}{cc}
\includegraphics[width=3in]{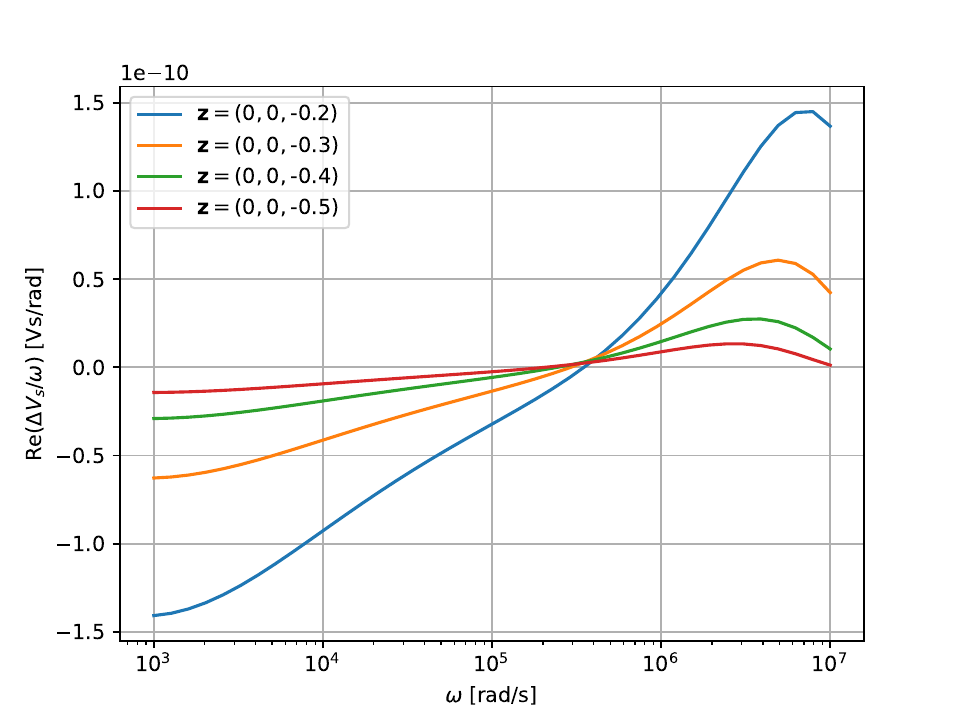} &
 \includegraphics[width=3in]{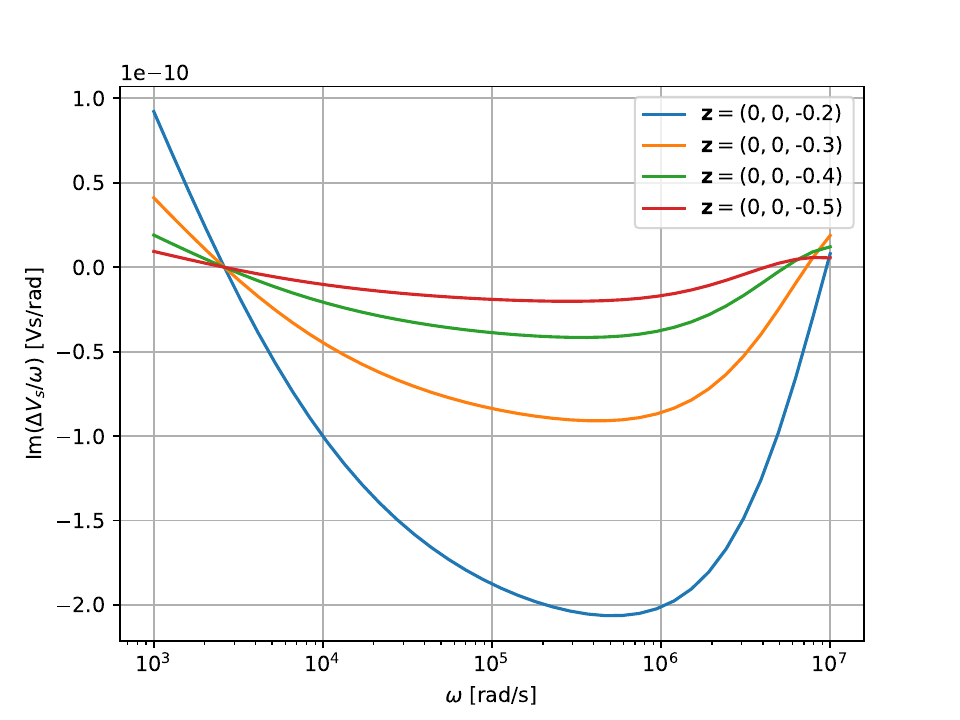} \\
 (a) & (b)
\end{array}$
 \end{center}
 \caption{ {Conducting buried sphere:  Influence of the object's location on $\Delta V_s$ in the measurement coil as a function of $\omega$. Showing $(a)$ $\text{Re}( \Delta V_s /\omega)$ and $(b)$ $\text{Im}( \Delta V_s /\omega)$.}} \label{fig:voltcomparelocsphere}
 \end{figure}
Again considering $\Delta V_s$ Asym Aprox. as a function of $\omega$,  returning the object location to ${\vec z}=(0,0,-0.4)$ m, but now considering the sphere's radius to be $\alpha=0.1,0.05,0.025, 0.0125$ m, in turn,  leads to the results shown  in Figure~\ref{fig:voltcomparealphasphere}. As expected, smaller objects lead to weaker voltage signals. 

 \begin{figure}
 \begin{center}
 $\begin{array}{cc}
\includegraphics[width=3in]{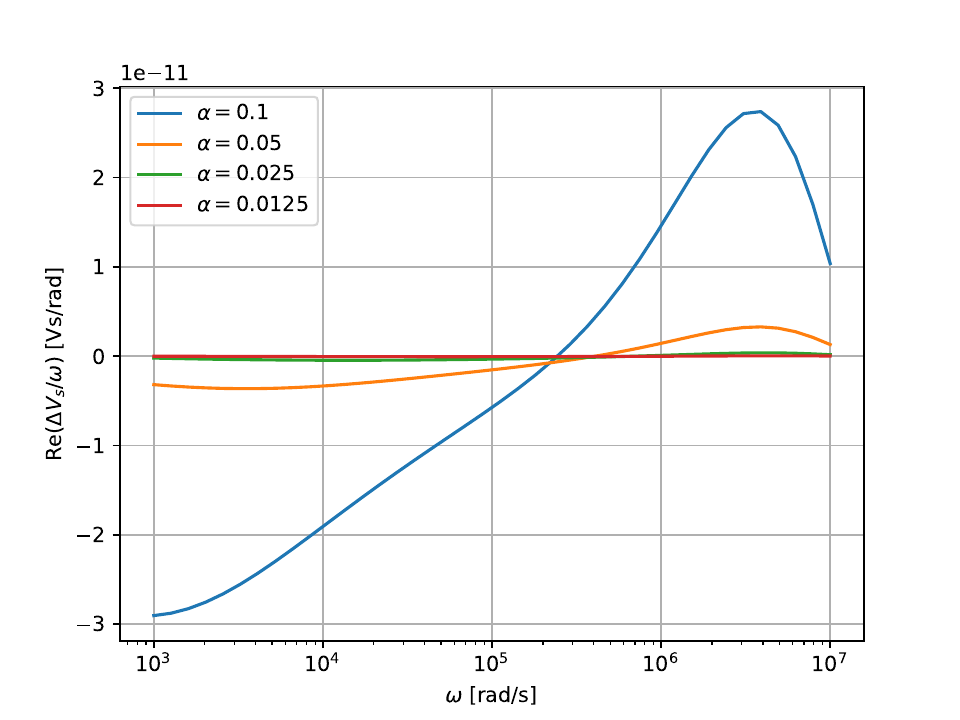} &
 \includegraphics[width=3in]{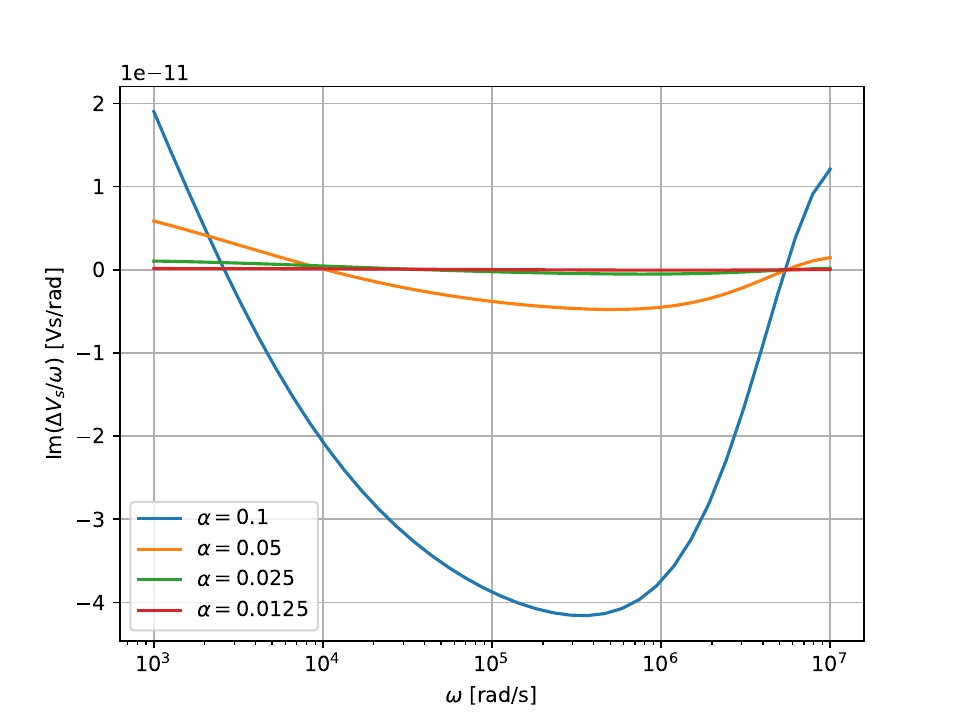} \\
 (a) & (b)
\end{array}$
 \end{center}
 \caption{ {Conducting buried sphere:  Influence of the object's size on $\Delta V_s$ in the measurement coil as a function of $\omega$. Showing $(a)$ $\text{Re}( \Delta V_s /\omega)$ and $(b)$ $\text{Im}( \Delta V_s /\omega)$.}} \label{fig:voltcomparealphasphere}
 \end{figure}

Approximate residual computations as a function of $1/\alpha$ are shown in Figure~\ref{fig:compareressphere} for two object locations ${\vec z}= (0,0,-0.3)$  and ${\vec z}= (0,0,-0.4)$. These illustrate that the rate exceeds or are the same as $C\alpha^4$ as $\alpha \to 0$, which is the expected behaviour.

 \begin{figure}
 \begin{center}
 $\begin{array}{cc}
 \includegraphics[width=3in]{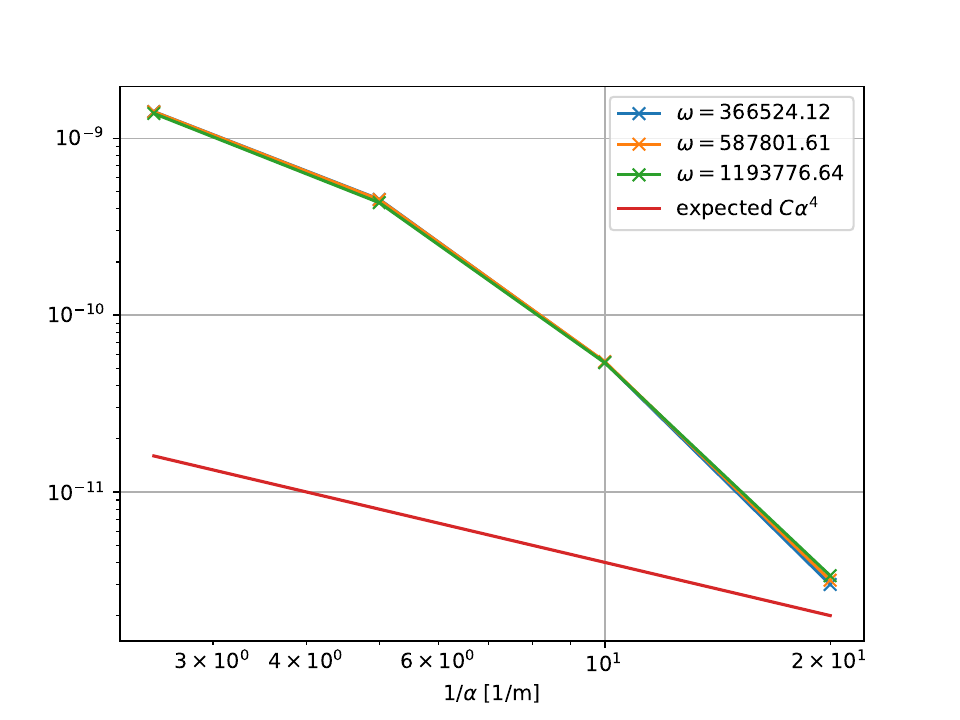} &
 \includegraphics[width=3in]{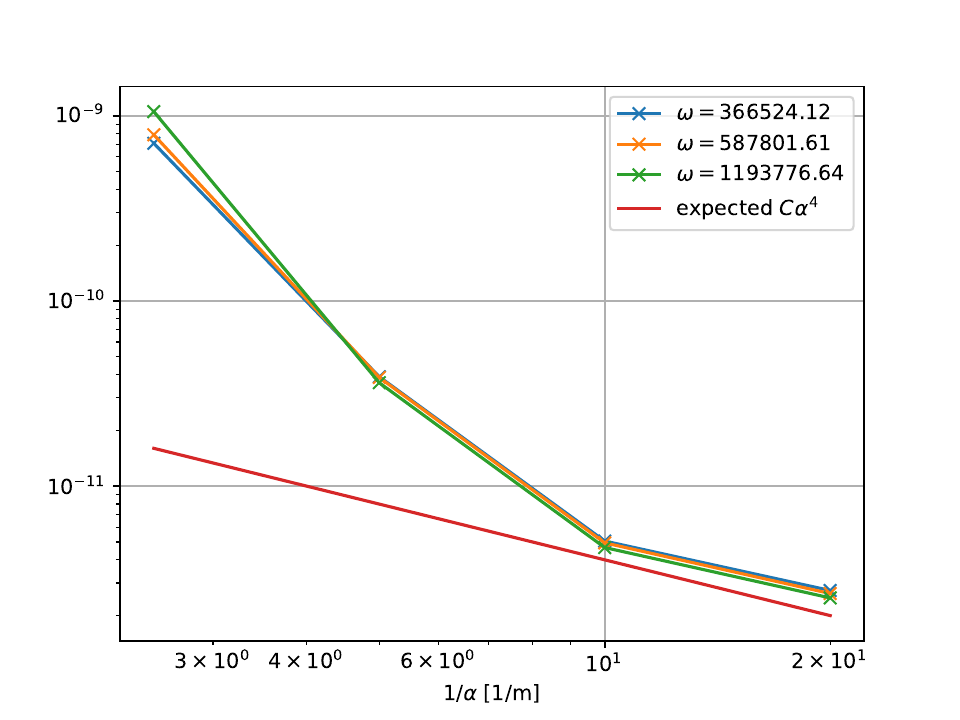} \\
  (a) & (b)
\end{array}$
 \end{center}
 \caption{ {Conducting buried sphere:  Approximate residual as a function of  $1/\alpha$ for two object locations. Showing $(a)$ ${\vec z}= (0,0,-0.3)$ and $(b)$ ${\vec z}= (0,0,-0.4)$.}} \label{fig:compareressphere}
 \end{figure}
 
\subsection{Buried ring object}

 {The situation is as described in Section~\ref{results:sphere} except that the buried spherical object is now replaced by a small ring. The ring is chosen to be of inner and outer radii 0.01 m and 0.012 m, respectively, with height $5\times 10^{-3}$ m and to be made of gold  with $\sigma_*=4.26\times 10^7$ S/m and $\mu_{r,*}=1$. The soil is chosen to have properties $\sigma_s=1. 6$ S/m and $\mu_{r,s}=1.0006$.  The discretisation is similar to that illustrated in Figure~\ref{fig:sim1mesh}, albeit with the sphere replaced by a smaller ring.
 The coefficients of the MPT object characterisation  $({\mathcal M})_{ij}$ are in this case obtained using the approach described in~\cite{Elgy2024_preprint}. As part of this process, approximate solutions to ${\vec \theta}_i$ to (\ref{eqn:thetatrans}) were computed at each $\omega$ of interest. A visualisation of the approximate solutions for $\omega = 1 \times 10^5$ rad/s in the vicinity of the object is shown in Figure~\ref{fig:ringcont}. The vectorial solutions as a function of $\omega$ are used to compute the MPT spectral signature object characterisation shown in Figure~\ref{fig:ringmpt}, which has two non-zero independent coefficients $({\mathcal M})_{11} = ({\mathcal M})_{22}$ and $({\mathcal M})_{33}  $, whose real and imaginary parts vary as a function of exciting frequency as shown.}

\begin{figure}
\begin{center}
$\begin{array}{ccc}
\includegraphics[width=1.4in]{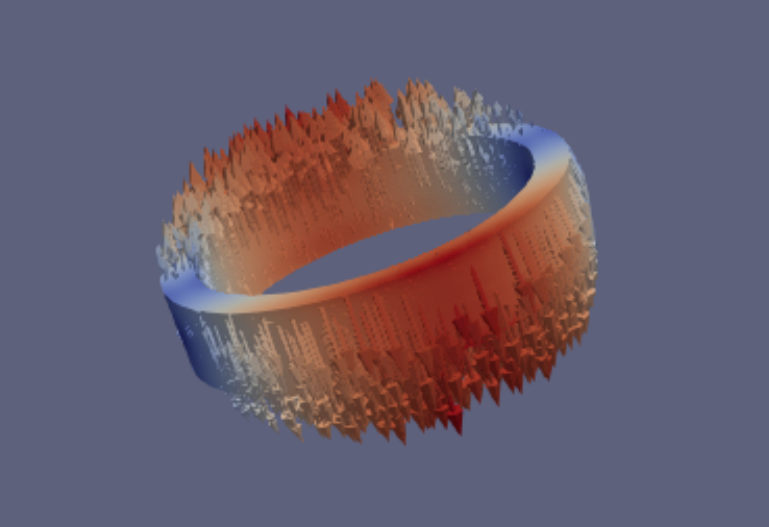} &
\includegraphics[width=1.5in]{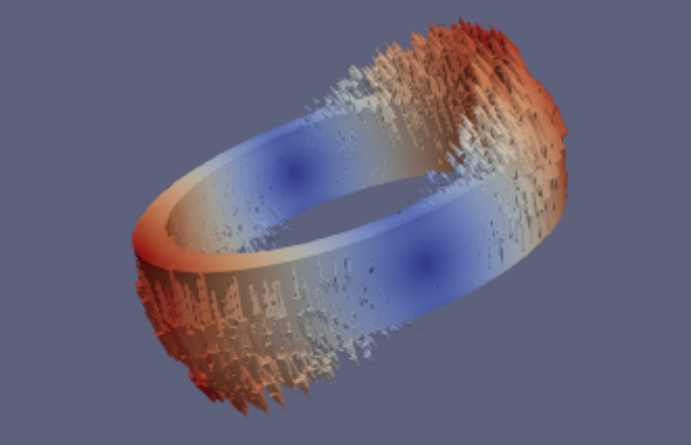} &
 \includegraphics[width=1.6in]{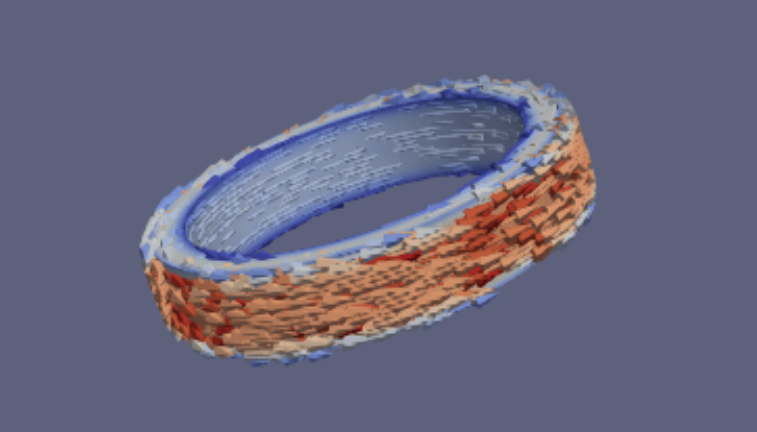} \\
 (a) & (b) &(c)
\end{array}$
\end{center}
\caption{ {Conducting buried ring: computed approximate solutions
$(a)$ $ \text{Im}({\vec \theta}_1^{(1)})$,$(b)$  $ \text{Im}({\vec \theta}_2^{(1)})$ and $(c)$ $ \text{Im}({\vec \theta}_3^{(1)})$
 to the transmission problem (\ref{eqn:thetatrans}) for object characterisation  using the splitting ${\vec \theta}_i = \tilde{\vec \theta}_i^{(0)} +{\vec \theta}_i^{(1)}$  for $\omega=1\times 10^5$ rad/s.}}\label{fig:ringcont}
\end{figure}
 {Using these MPT object characterisations, and considering  objects placed at ${\vec z}=(0,0,-0.2)$ m and ${\vec z}=(0,0,-0.4)$ m, the investigation shown Figure~\ref{fig:voltcomparemethsphere} is repeated for the case of the ring with results presented in Figure~\ref{fig:voltcomparemethring}.}
\begin{figure}
\begin{center}
 $\begin{array}{cc}
 \includegraphics[width=3in]{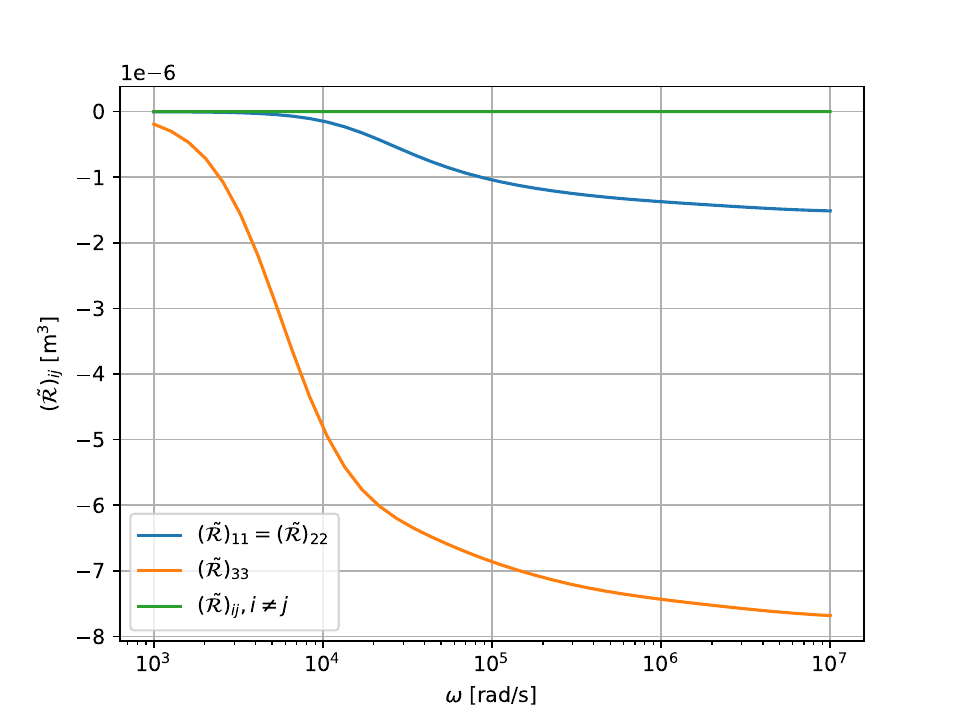} &
 \includegraphics[width=3in]{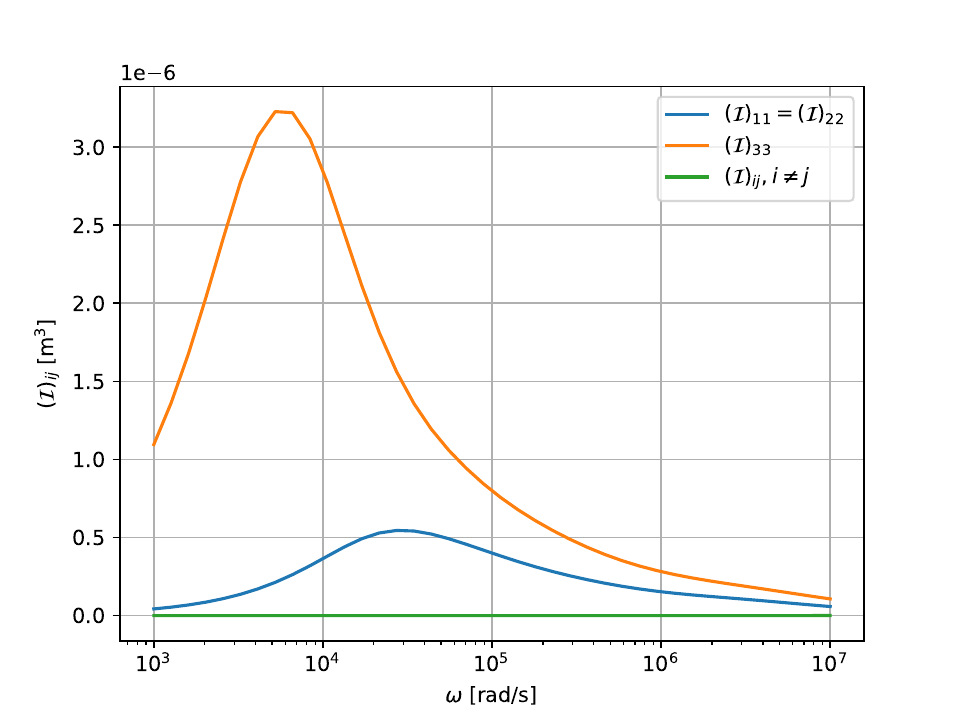} \\
  (a) & (b)
  \end{array}$
\end{center}
\caption{ {Conducting buried ring:  Computed MPT spectral signature $(a)$ $(\tilde{\mathcal R} )_{ij}$ and $(b)$ $({\mathcal I})_{ij}$.}}\label{fig:ringmpt}
\end{figure}
 { The comparisons are similar to the aforementioned spherical object, where the importance of including the soil being clearly visible, with similar levels of accuracy to the previous case. As expected, the voltages become weaker for the case where ${\vec z}=(0,0,-0.4)$ m.}

   \begin{figure}
 \begin{center}
 $\begin{array}{cc}
 \includegraphics[width=3in]{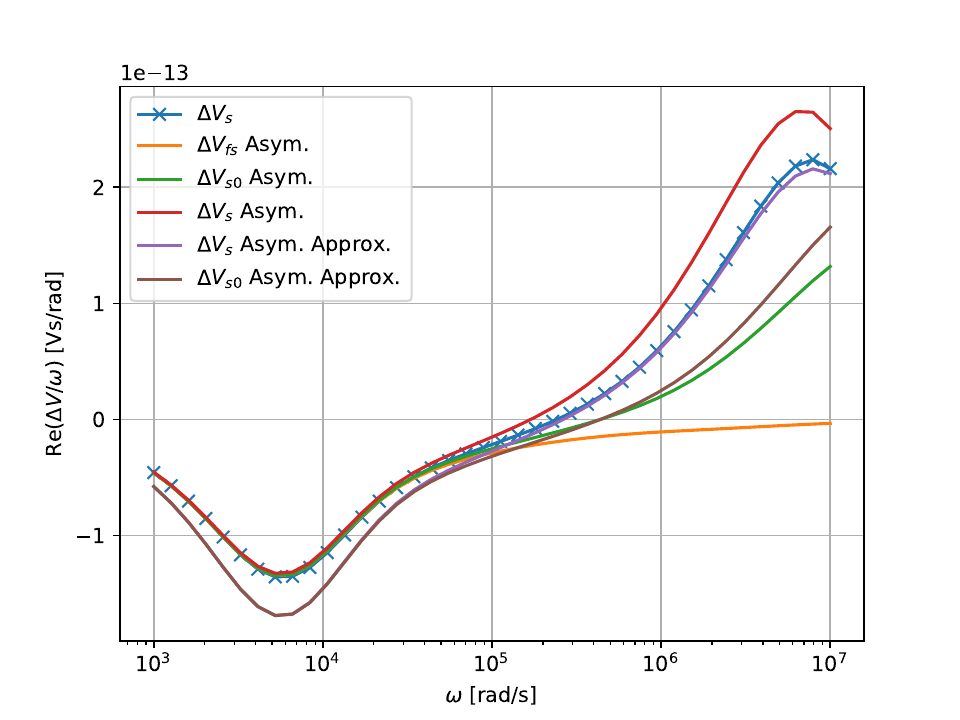} &
 \includegraphics[width=3in]{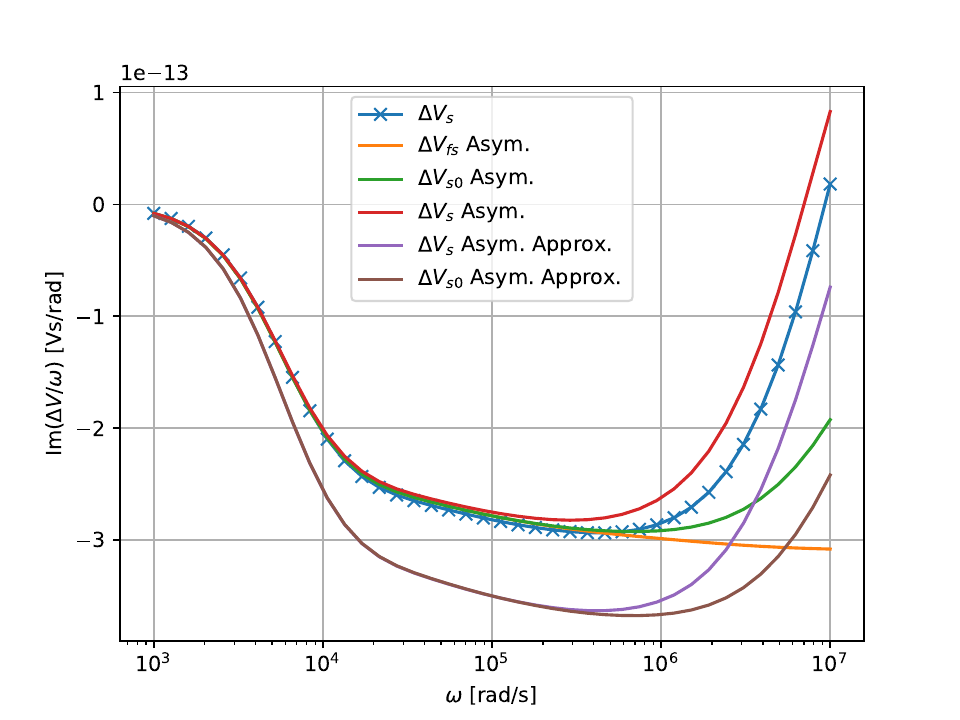}  \\
(a) & (b) \\
 \includegraphics[width=3in]{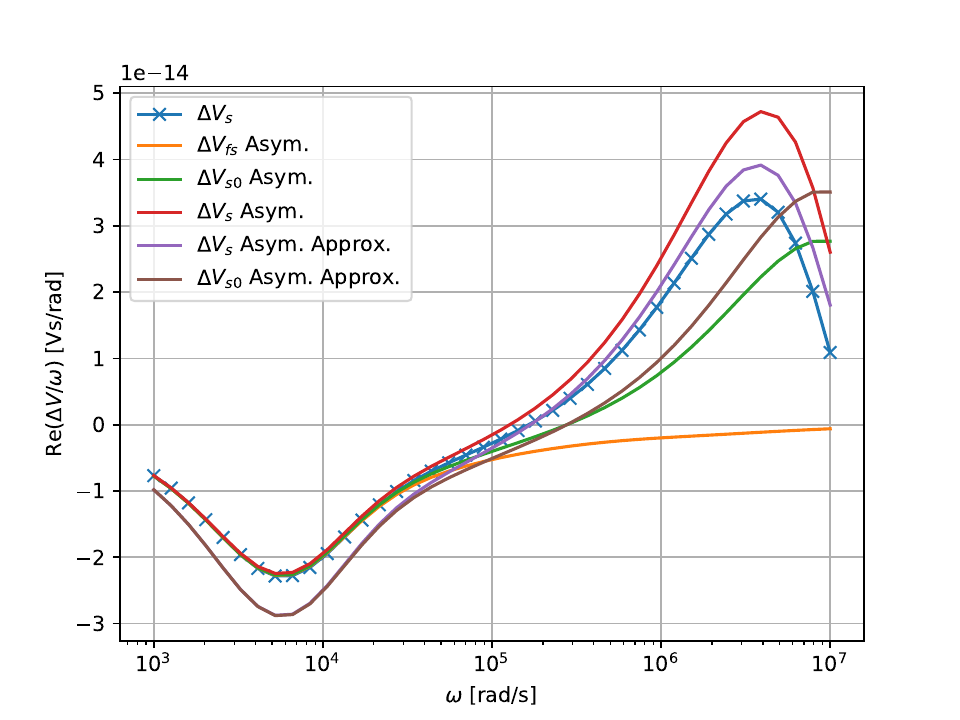} &
 \includegraphics[width=3in]{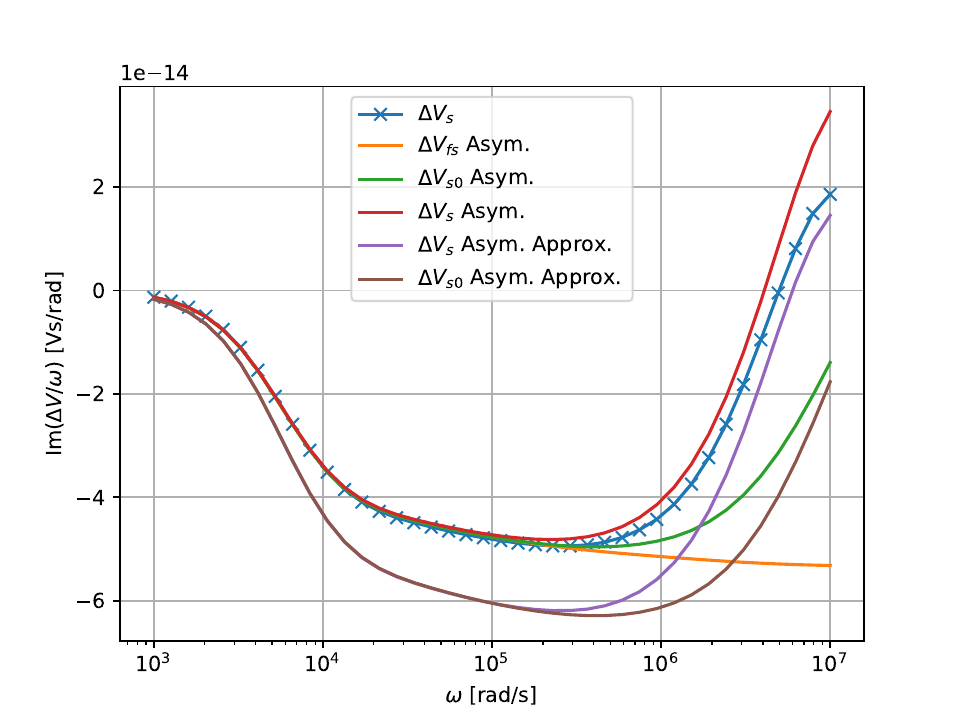}  \\
(c) & (d) 
 \end{array}$
 \end{center}
 \caption{ {Conducting buried ring:  Comparison of induced voltages $\Delta V_s$ and  $\Delta V_{fs}$ and  $\Delta V_{s0}$ in the measurement coil as a function of $\omega$. Showing $(a)$ $\text{Re}( \Delta V /\omega)$ and $(b)$ $\text{Im}( \Delta V /\omega)$ for ${\vec z}=(0,0,-0.2)$ m and  $(c)$ $\text{Re}( \Delta V /\omega)$ and $(d)$ $\text{Im}( \Delta V /\omega)$ for ${\vec z}=(0,0,-0.4)$ m. }} \label{fig:voltcomparemethring}
 \end{figure}

 {Approximate residual computations as a function of $1/\alpha$ for the buried ring are shown in Figure~\ref{fig:comparereflatring} for two object locations ${\vec z}= (0,0,-0.3)$  and ${\vec z}= (0,0,-0.4)$, which are the same locations considered in Figure~\ref{fig:compareressphere} for the spherical object. Again these illustrate that the rate exceeds  $C\alpha^4$ as $\alpha \to 0$, which is the expected behaviour.} 
 
 \begin{figure}
 \begin{center}
 $\begin{array}{cc}
 \includegraphics[width=3in]{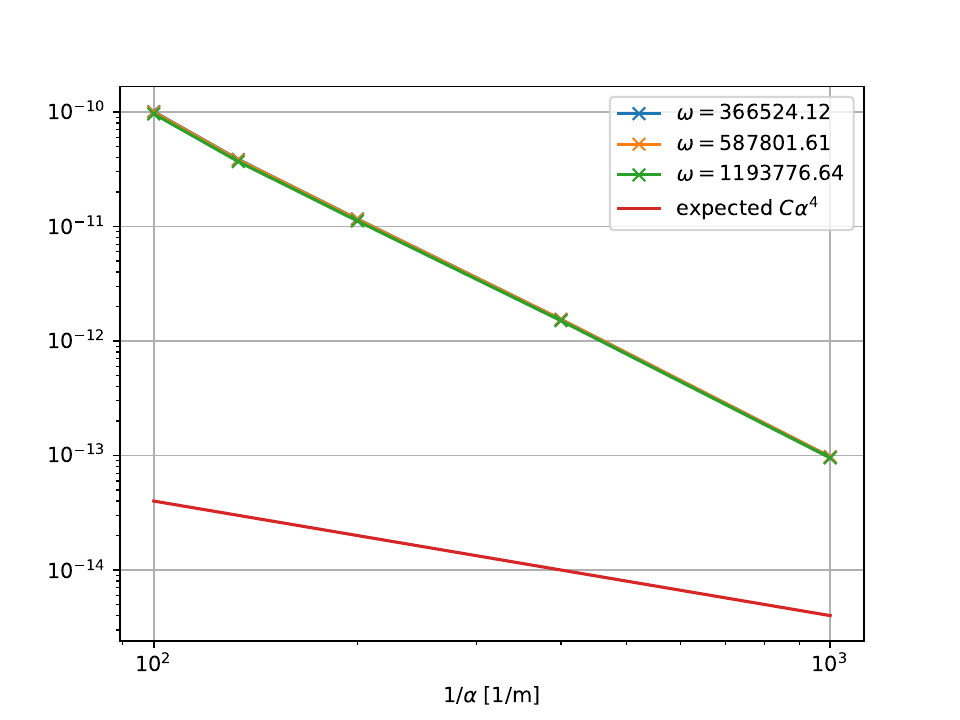} &
 \includegraphics[width=3in]{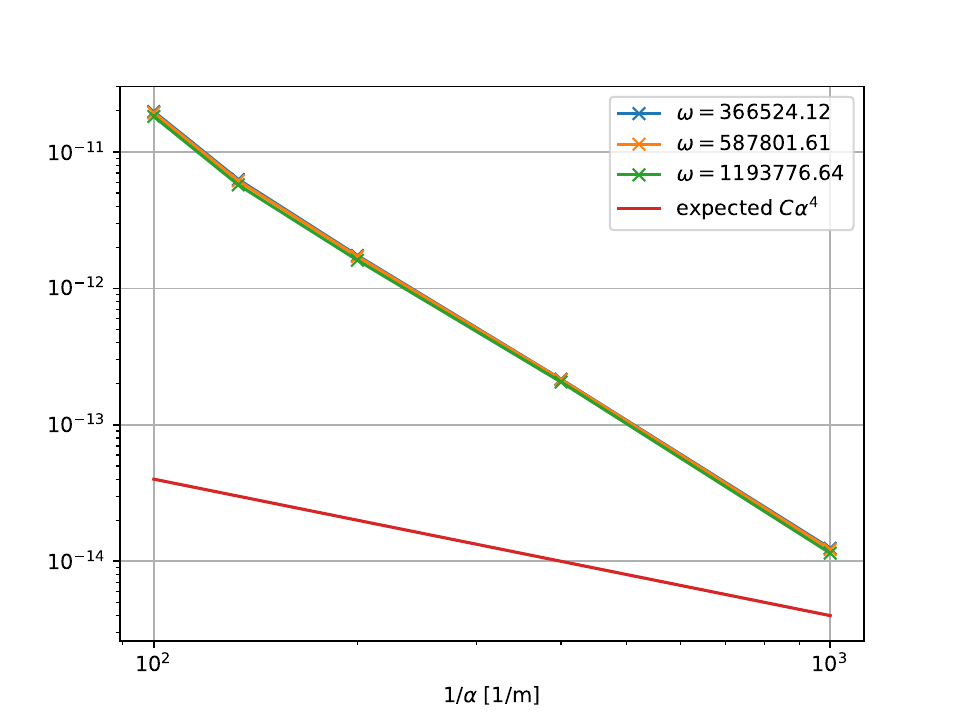} \\
  (a) & (b)
\end{array}$
 \end{center}
 \caption{ {Conducting buried ring:  Approximate residual as a function of  $1/\alpha$ for two object locations. Showing $(a)$ ${\vec z}= (0,0,-0.3)$ and $(b)$ ${\vec z}= (0,0,-0.4)$.}} \label{fig:comparereflatring}
 \end{figure}

\section{Conclusions}
 {In this paper we have derived an asymptotic expansion for the perturbed magnetic field in the presence of a
 small, arbitrarily shaped, highly conducting buried object with a smooth boundary and constant material parameters.}   {Previous work  assumed that the ground is non-conducting and with the same permeability as free space. We have removed  this key  restriction, which is important for buried objects like landmines and UXO,  and obtained new results on object characterisation.
We have provided conditions for the permeability and conductivity of the uncooperative}  {soil in order to allow the object to be characterised by a complex symmetric rank-2 MPT, whose coefficients can be computed independently of the object's position.
We have illustrated the importance of including the soil's material properties and the usefulness of our new result through a series of computational experiments.} {Future work will include the extension to characterisation of buried objects by generalised MPTs allowing for discrimination of more subtle classes of objects.}

\section*{Acknowledgements}

 {P.D. Ledger gratefully acknowledges the financial support received through an ICMS KE\_Catalyst Award, which has supported this work.}  {W.R.B. Lionheart gratefully acknowledges  Clare Hall, Cambridge for a visiting fellowship.}

\section*{Competing interests}
The authors declare none.

\bibliographystyle{acm}
\bibliography{paperbib}
\end{document}